\newcommand{\BlackBoxes}{\global\overfullrule5pt}
\newcommand{\R}{\mathbb{R}}
\newcommand{\N}{\mathbb{N}}
\newcommand{\Pop}{\mathbb{P}}
\newcommand{\Q}{\mathbb{Q}}
\newcommand{\A}{\mathcal{A}}
\newcommand{\B}{\mathcal{B}}
\newcommand{\BB}{\mathbb{B}}
\newcommand{\E}{\mathbb{E}}
\renewcommand{\H}{\mathcal{H}}
\newcommand{\M}{\mathcal{M}}
\renewcommand{\P}{\mathbb{P}}
\newcommand{\Pc}{\mathcal{P}}
\newcommand{\Qc}{\mathcal{Q}}
\newcommand{\Qf}{\mathfrak{Q}}
\newcommand{\T}{\mathcal{T}}
\newcommand{\U}{\mathcal{U}}
\newcommand{\Z}{\mathcal{Z}}
\newcommand{\Zf}{\mathfrak{Z}}
\newcommand{\q}{F^{-1}}
\DeclareMathOperator{\argmax}{argmax}
\DeclareMathOperator{\conv}{conv}
\DeclareMathOperator{\cconv}{\overline{conv}}
\DeclareMathOperator{\dif}{d}
\DeclareMathOperator{\esssup}{ess\,sup}
\newcommand{\ubar}[1]{\underaccent{\bar}{#1}}
\newcommand{\wt}{\widetilde}
\newtheorem{theorem}{Theorem}
\newtheorem{corollary}[theorem]{Corollary}
\newtheorem{lemma}[theorem]{Lemma}
\newtheorem{proposition}[theorem]{Proposition}
\theoremstyle{definition}
\newtheorem{example}[theorem]{Example}
\newtheorem{remark}[theorem]{Remark}
\newtheorem{definition}[theorem]{Definition}
\newtheorem{assumptions}[theorem]{Assumption}
\numberwithin{equation}{section} \numberwithin{theorem}{section}
\def\0{\kern0pt\-\nobreak\hskip0pt\relax}
\def\makeoverbar#1#2#3#4#5#6#7{ \setbox0=\hbox{$\m@th#2\mkern#5mu{{}#3{}}\mkern#6mu$} \setbox1=\null \dimen@=#4\fontdimen8#13 \dimen@=3.5\dimen@
\advance\dimen@ by \ht0 \dimen@=-#7\dimen@ \advance\dimen@ by \wd0
\ht1=\ht0 \dp1=\dp0 \wd1=\dimen@
\dimen@=\fontdimen8#13 \fontdimen8#13=#4\fontdimen8#13
\rlap{\hbox to \wd0{$\m@th\hss#2{\overline{\box1}}\mkern#5mu$}}
\fontdimen8#13=\dimen@}
\def\mylabel#1#2{{\def\@currentlabel{#2}\label{#1}}}
\begin{document}
\title[ Distributionally Robust Markov Decision Processes ]{Distributionally Robust  Markov Decision Processes and their Connection to Risk Measures}
\author[N. \smash{B\"auerle}]{Nicole B\"auerle${}^*$}
\address[N. B\"auerle]{Department of Mathematics,
Karlsruhe Institute of Technology (KIT), D-76128 Karlsruhe, Germany}

\email{\href{mailto:nicole.baeuerle@kit.edu}{nicole.baeuerle@kit.edu}}

\author[A. \smash{Glauner}]{Alexander Glauner${}^*$}
\address[A. Glauner]{Department of Mathematics,
Karlsruhe Institute of Technology (KIT), D-76128 Karlsruhe, Germany}

\email{\href{mailto:alexander.glauner@kit.edu} {alexander.glauner@kit.edu}}

%\thanks{${}^*$ Department of Mathematics,
%Karlsruhe Institute of Technology (KIT), D-76128 Karlsruhe, Germany}
%\thanks{${}^\dagger$ Department of Mathematics, Karlsruhe Institute of Technology (KIT), D-76128 Karlsruhe, Germany}

\begin{abstract}
We consider robust Markov Decision Processes with Borel state and action spaces, unbounded cost and finite time horizon. Our formulation leads to a Stackelberg game against nature. Under integrability, continuity and compactness assumptions we derive a robust cost iteration for a fixed policy of the decision maker and a value iteration for the robust optimization problem. Moreover, we show the existence of deterministic optimal policies for both players. This is in contrast to classical zero-sum games. In case the state space is the real line we show under some convexity assumptions that the interchange of supremum and infimum is possible with the help of Sion's minimax Theorem.  Further, we consider the problem with special ambiguity sets. In particular we are able to derive some cases  where the robust optimization problem coincides with the minimization of a coherent risk measure. In the final section we discuss two applications: A robust LQ problem and a robust problem for managing regenerative energy.

\end{abstract}
\maketitle

%\listoftodos

\makeatletter \providecommand\@dotsep{5} \makeatother
%\listoftodos[Changes in Orange/Red To Do List in Green / Blue]\relax

%\address[N. B\"auerle]{Department of Mathematics,
%Karlsruhe Institute of Technology, D-76128 Karlsruhe, Germany}

%\email{\href{mailto:nicole.baeuerle@kit.edu}{nicole.baeuerle@kit.edu}}

\vspace{0.5cm}
\begin{minipage}{14cm}
{\small
\begin{description}
\item[\rm \textsc{ Key words}]
{\small Robust Markov Decision Process; Dynamic Games; Minimax Theorem; Risk Measures}
\item[\rm \textsc{AMS subject classifications}] 
{\small 90C40, 90C17, 91G70}
%{\small  ..... }
\end{description}
}
\end{minipage}

\section{Introduction}

Markov Decision Processes (MDPs) are a well-established tool to model and solve sequential decision making under stochastic perturbations. In the standard theory it is assumed that all parameters and distributions are known or can be estimated with a certain precision. However, using the so-derived 'optimal' policies in a system where the true parameters or distributions deviate, may lead to a significant degeneration of the performance. In order to cope with this problem there are different approaches in the literature.

The first approach which is typically used when parameters are unknown is the so-called {\em Bayesian approach}. In this setting a prior distribution for the model parameters is assumed and additional information which is revealed while the process evolves, is used to update the beliefs. Hence this approach allows that parameters can be learned. It is very popular in engineering applications. Introductions can e.g. be found in \cite{hernandez2012adaptive} and \cite{BaeuerleRieder2011}. In this paper we are not going to pursue  this stream of literature.

A second approach is the so-called {\em robust approach}. Here it is assumed that instead of having one particular transition law, we are faced with a whole family of laws which are possible for the transition. In the literature this is referred to as \emph{model ambiguity}.  One way of dealing with this ambiguity is the \emph{worst case approach}, where the controller selects a policy which is optimal with respect to the most adverse transition law in each scenario. This setting can also be interpreted as a dynamic game with nature as the controller's opponent. 
The worst case approach is empirically justified by the so-called \emph{Ellsberg Paradox}. The experiment suggested by \cite{Ellsberg1961} has shown that agents tend to be ambiguity averse. In the sequel, axiomatic approaches to model  risk and ambiguity attitude  have  appeared, see e.g. \cite{gilboa1989maxmin,maccheroni2006ambiguity}.
\cite{EpsteinSchneider2003} investigated the question whether ambiguity aversion can be incorporated in an axiomatic model of intertemporal utility. The representation of the preferences turned out to be some worst case expected utility, i.e. the minimal expected utility over an appropriate set of probability measures. This set of probability measures needs to satisfy some rectangularity condition for the utility to have a recursive structure and therefore being time consistent. 
The rectangularity property has been taken up by \cite{Iyengar2005} as a key assumption for being able to derive a Bellman equation for a robust MDP with countable state and action spaces. Contemporaneously, \cite{NilimGhaoui2005} reached similar findings, however limited to finite state and action spaces. 

\cite{wiesemann2013robust} have considered robust MDP beyond the rectangularity condition. Based on observed histories, they derive a confidence region that contains the unknown parameters with a prespecified probability and determine a policy that attains the highest worst-case performance over this confidence region. A similar approach has been taken in \cite{xu2010distributionally} where nested uncertainty sets for transition laws are given which correspond to confidence sets. The optimization is then based on the expected performance of policies under the (respective) most adversarial distribution. This approach lies between the Bayesian and the robust approach since the decision maker uses prior information without a Bayesian interpretation. All these analyses are restricted to finite state and action spaces. A similar but different  approach has been considered in \cite{bauerle2019markov} where parameter ambiguity is combined with Bayesian learning methods. Here the authors deal with arbitrary Borel state and action spaces.

In our paper we will generalize the results of \cite{Iyengar2005} to a model with Borel spaces and unbounded cost function. We consider finite horizon expected cost problems with a given transition law under ambiguity concerning the distribution of the disturbance variables. The problem is to minimize the worst expected cost. This leads to a Stackelberg game against nature.  In order to deal with the arising measurability issues which impose a major technical difficulty compared to the countable space situation, we borrow from the dynamic game setup in \cite{Gonzales2002} and \cite{JaskiewiczNowak2011}. 
The major difference of our contribution compared to these two works is the design of the distributional ambiguity. We replace the topology of weak convergence on the ambiguity set by the weak* topology $\sigma(L^q,L^p)$ in order to obtain connections to recursive risk measures in Section \ref{sec:special_sets}. Moreover, we rigorously derive a Bellman equation.  Note that \cite{jaskiewicz2014robust} treats another robust MDP  setup with Borel state and action spaces, however deals with the average control problem. 
Further, our model allows for rather general ambiguity sets for the disturbance distribution. 
Under additional technical assumptions our model also comprises the setting of a decreasing confidence regions for distributions. Moreover, we discuss in detail sufficient conditions for the interchange of supremum and infimum. We provide a counterexample which shows that this interchange is not always possible, in contrast to \cite{NilimGhaoui2005}, \cite{wiesemann2013robust}. This counterexample also highlights the difference to classical two-person zero-sum games. Further, we are able to derive a connection to the optimization of risk measures in MDP frameworks. In case the ambiguity sets are chosen in a specific way the robust optimization problem coincides with the optimization of a risk measure applied to the total cost.

The outline of the paper is as follows: In the next section we introduce our basic model which is a game against nature concerning expected cost with a finite time horizon. Section \ref{sec:abstract_robust_finite} contains the first main results. Under integrability, continuity and compactness assumptions we derive a robust cost iteration for a fixed policy of the decision maker and a value iteration for the robust optimization problem. Moreover, we show the existence of optimal deterministic policies for both players. This is in contrast to classical zero-sum games where we usually obtain optimal randomized policies.
In Section \ref{sec:abstract_robust_real} we consider the real line as state space which allows for slightly different model assumptions. Then in Section \ref{sec:abstract_robust_minimax} we discuss (with the real line as state space) when supremum and infimum can be interchanged in the solution of the problem. Under some convexity assumptions this can be achieved with the help of Sion's minimax Theorem \cite{Sion1958}. Being able to interchange supremum and infimum sometimes simplifies the solution of the problem. In Section \ref{sec:special_sets} we consider the problem with special ambiguity sets. In particular we are able to derive some cases which can be solved straightforward and situations where the robust optimization problem coincides with the minimization of a coherent risk measure. In the final section we discuss two applications: A robust LQ problem and  a robust problem for managing regenerative energy.

\section{The Markov Decision Model}\label{sec:abstract_cost_model}
We consider the following standard Markov Decision Process with general Borel state and action spaces and  restrict ourselves  to a model with \emph{finite planning horizon} $N \in \N$. Results for an infinite planning horizon can be found in \cite{glauner20}.  The \emph{state space} $E$ is a Borel space with Borel $\sigma$-algebra $\B(E)$ and the \emph{action space} $A$ is a Borel space with Borel $\sigma$-Algebra $\B(A)$. The possible state-action combinations at time $n$ form a measurable subset $D_n$  of $E \times A$ such that $D_n$ contains the graph of a measurable mapping $E \to A$. The $x$-section of $D_n$,  
	\[ D_n(x) = \{ a \in A: (x,a) \in D_n \}, \]
	is the set of admissible actions in state $x \in E$ at time $n$. The sets $D_n(x)$ are non-empty by assumption. We assume that the dynamics of the MDP are given by measurable \emph{transition functions} $T_n:D_n \times \Z \to E$ and depend on 
 \emph{disturbances} $Z_1,\dots,Z_N$ which are independent random elements on a common probability space $(\Omega,\A,\P)$ with values in a measurable space $(\Z, \Zf)$. When the current state is $x_n$ the controller chooses action $a_n\in D_n(x_n)$ and $z_{n+1}$ is the realization of $Z_{n+1}$, then the next state is given by
	\[ x_{n+1} = T_n(x_n,a_n,z_{n+1}). \] The \emph{one-stage cost function} $c_n:D_n\times E \to \R$ gives the cost $c_n(x,a,x')$  for choosing action $a$ if the system is in state $x$ at time $n$ and the next state is $x'$.
The \emph{terminal cost function} $c_N: E \to \R$ gives the cost $c_N(x)$ if the system terminates in state $x$.

The model data is supposed to have the following continuity and compactness properties. 
%In the subsequent chapters it will be stated explicitly, which of the three assumptions is made on the transition function.
\begin{assumptions}\phantomsection\label{ass:continuity_compactness}
	\begin{enumerate}
		\item[(i)]  $D_n(x)$ are compact and  $E \ni x \mapsto D_n(x)$ is upper semicontinuous  for $n=0,\dots,N-1$, i.e. if $x_k\to x$ and $a_k\in D_n(x_k)$, $k\in\N$, then $(a_k)$ has an accumulation point in $D_n(x)$.
		\item[(ii)] $T_n(x,a,z)$ is continuous in $(x,a)$ for $z\in \Z$ and $n=0,\dots,N-1$.
		%Regarding the transition function we distinguish three cases: 
		%\begin{itemize}
		%	\item[Case 1:] $T_n$ is continuous in $(x,a)$ for $n=0,\dots,N-1$.
		%	\item[Case 2:] $T_n$ is lower semicontinuous in $(x,a)$ for $n=0,\dots,N-1$.
		%	\item[Case 3:] $T_n$ is upper semicontinuous in $(x,a)$ for $n=0,\dots,N-1$.
		%\end{itemize}
		\item[(iii)]  $c_n,  n=0,\dots,N-1$, as well as the terminal cost function $c_N$ are lower semicontinuous. 
	\end{enumerate}
\end{assumptions}

%The cost model is called \emph{stationary} if $D,\ T$ do not depend on $n$, the disturbances are identically distributed, the one-stage cost functions are of the form $c_n=\beta^n c$, $n=0,\dots,N-1$, and the terminal cost function is $\beta^N c_N$, where $\beta \in (0,1]$ is a discount factor. In that case $Z$ denotes a representative of the disturbance variables.  For a non-stationary model one may think of the discount factor being included in the cost functions. 

For $n \in \N_0$ we denote by $\H_n$ the set of \emph{feasible histories} of the decision process up to time $n$
\begin{align*}
h_n = \begin{cases}
x_0, & \text{if } n=0,\\
(x_0,a_0,x_1, \dots, x_n), & \text{if } n \geq 1,
\end{cases}
\end{align*}
where $a_k \in D_k(x_k)$ for $k \in \N_0$. In order for the controller's decisions to be implementable, they must be based on the information available at the time of decision making, i.e. be functions of the history of the surplus process. 

\begin{definition}
	\begin{itemize}
		\item[(i)] A \emph{randomized policy} is a sequence $\pi = (\pi_0,\pi_1,\dots,\pi_{N-1})$ of stochastic kernels $\pi_n$ from $\mathcal{H}_n$ to the action space $A$  satisfying the constraint 
		\[ \pi_n(D_n(x_n)|h_n) = 1, \qquad h_n \in \mathcal{H}_n. \] 
	%	A finite sequence $\pi = (\pi_0,\dots,\pi_{N-1})$ is referred to as \emph{randomized $N$-stage policy}.
		\item[(ii)] A measurable mapping $d_n: \mathcal{H}_n \to A$ with $d_n(h_n) \in D_n(x_n)$ for every $h_n \in \mathcal{H}_n$ is called (deterministic) \emph{decision rule} at time $n$.  $\pi=(d_0, d_1, \dots,d_{N-1})$ is called (deterministic) \emph{ policy}.
		\item[(iii)] A decision rule at time $n$ is called \emph{Markov} if it  depends on the current state only, i.e.\ $d_n(h_n)=d_n(x_n)$ for all $h_n \in \mathcal{H}_n$. If all decision rules are Markov, the deterministic ($N$-stage) policy is called \emph{Markov}.
%		\item[(iv)] A sequence of decision rules $\pi=(d_0, d_1, \dots)$ is called \emph{deterministic policy} and a finite sequence $\pi=(d_0, \dots, d_{N-1})$ is called \emph{deterministic $N$-stage policy}.
%		\item[(v)] If all decision rules are Markov, the deterministic ($N$-stage) policy is called \emph{Markov}.
%		\item[(iv)] An ($N$-stage) Markov policy $\pi$ is called \emph{stationary} if $\pi=(d,d,\dots,d)$ for some Markov decision rule $d$.
	\end{itemize}
\end{definition}
For convenience, deterministic policies may simply be referred to as policy. With $\Pi^R \supseteq \Pi \supseteq \Pi^M $ we denote the sets of all randomized policies, deterministic policies and Markov policies. The first inclusion is by identifying deterministic decision rules $d_n$ with the corresponding Dirac kernels
\[ \pi_n(\cdot|h_n):= \delta_{d_n(h_n)}(\cdot), \qquad h_n \in \H_n. \]
 A feasible policy always exists since $D_n$ contains the graph of a measurable mapping.

 Due to the independence of the disturbances we may without loss of generality assume that the probability space has a product structure
\[ (\Omega,\A,\P) = \bigotimes_{n=1}^\N (\Omega_n,\A_n,\P_n). \]
%with $Z_n(\bar \omega)=Z_n(\omega_n)$ only depending on component $\omega_n$ of $\bar \omega=(\omega_1,\dots,\omega_N) \in \Omega$ for $n=1,\dots,N$. When writing shorthand $Z_n(\omega)$ for $Z_n(\omega_n)$ we mean by $\omega$ the component of $\bar \omega=(\omega_1,\dots,\omega_N)$ that $Z_n$ actually depends on.

We take $(\Omega,\A,\P)$ as the canonical construction, i.e.
\[ (\Omega_n,\A_n,\P_n) =  \left(\mathcal{Z},\mathfrak{Z},\P^{Z_n}\right) \qquad \text{and} \qquad Z_n(\bar \omega) = \omega_n, \quad  \bar \omega= (\omega_1,\dots,\omega_N) \in \Omega \]
for all $n =1,\dots,N$. We denote by $(X_n), (A_n)$ the random state and action processes and define $H_n:=(X_0,A_0,\ldots,X_n)$.  In the sequel, we will require $\P_n$ to be separable. Additionally, we will assume for some results that $\left(\Omega_n,\A_n,\P_n \right)$ is atomless in order to support a generalized distributional transform. 

Let $n \in \{0,\dots,N-1\}$ be a stage of the decision process. Due to the product structure of $(\Omega,\A,\P)$ the transition kernel is given by
\begin{align}\label{eq:abstract_transition_kernel_product_space}
Q_n(B|x,a)= \int 1_B\big(T_n(x,a,z_{n+1})\big) \P_{n+1}(\dif z_{n+1}), \quad B \in \B(E), \ (x,a) \in D_n.
\end{align}
We assume now that there is some uncertainty about $\Pop_n$, e.g. because it cannot be estimated properly. Moreover, the decision maker is very risk averse and tries to minimize the expected cost on a worst case basis. Thus, we denote by $\M(\Omega_n,\A_n,\P_n )$ the set of probability measures on $(\Omega_n,\A_n)$ which are absolutely continuous with respect to $\P_n$ and define for $q\in (1,\infty]$
\[ \M^q(\Omega_n,\A_n,\P_n) = \left\{ \Q \in\M(\Omega_n,\A_n,\P_n): \frac{\dif \Q}{\dif \P_n} \in L^q(\Omega_n,\A_n,\P_n) \right\}. \] 
Henceforth, we fix a non-empty subset $\Qc_n \subseteq \M^q(\Omega_n,\A_n,\P_n)$ which is referred to as \emph{ambiguity set} at stage $n$. This can be seen as the set of probability measures which may reflect the  law of motion.  Due to absolute continuity, we can identify $\Qc_n$ with the set of corresponding densities w.r.t.\ $\P_n$
\begin{equation}\label{eq:densities}
\Qc_n^d= \left\{ \frac{\dif \Q}{\dif \P_n} \in L^q(\Omega_n,\A_n,\P_n): \Q \in \Qc_n \right\}.
\end{equation}
Accordingly, we view $\Qc_n$ as a subset of $L^q(\Omega_n,\A_n,\P_n)$ and endow it with the trace topolgy of the weak* topolgy $\sigma(L^q,L^p)$ on $L^q(\Omega_n,\A_n,\P_n)$ where $\frac1p+\frac1q=1$. The weak* topology in turn induces a Borel $\sigma$-algebra on $\Qc_n$ making it a measurable space. We obtain the following result (for a proof see the Appendix).

\begin{lemma}\label{thm:ambiguity_sep_metrizable}
	Let the ambiguity set be norm-bounded and the probability measure $\P_n$ on $(\Omega_n,\A_n)$ be separable. Then $\Qc_n$ endowed with the weak* topology $\sigma(L^q,L^p)$ is a separable metrizable space. If $\Qc_n$ is additionally weak* closed, it is even a compact Borel space. 
\end{lemma}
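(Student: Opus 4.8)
The plan is to reduce the assertion to two classical facts: the Banach--Alaoglu theorem, and the fact that the weak* topology on norm-bounded subsets of the dual of a \emph{separable} Banach space is metrizable.

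First I would fix $p \in [1,\infty)$ with $\frac1p + \frac1q = 1$; note $p < \infty$ because $q > 1$, with $p = 1$ in the endpoint case $q = \infty$. Via the pairing $\langle g,f\rangle = \int g f \, \dif\P_n$ the space $L^q(\Omega_n,\A_n,\P_n)$ is isometrically isomorphic to the dual of $L^p(\Omega_n,\A_n,\P_n)$; for $q = \infty$ this uses that $\P_n$, being finite, is $\sigma$-finite. Then I would invoke that the hypothesis ``$\P_n$ separable'' means the measure algebra of $(\Omega_n,\A_n,\P_n)$ is separable, which is equivalent to $L^p(\Omega_n,\A_n,\P_n)$ being a separable Banach space for every $p \in [1,\infty)$; hence the predual $L^p$ is separable.

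Next, using that $\Qc_n$ is norm-bounded, I would fix $r > 0$ with $\Qc_n \subseteq r\,\overline{B}$, where $\overline{B}$ denotes the closed unit ball of $L^q$, so that $r\,\overline{B}$ is weak* compact by Banach--Alaoglu. Taking a countable dense set $\{f_k : k \in \N\}$ in the unit ball of $L^p$, I would check that
\[ \rho(g,h) = \sum_{k=1}^\infty 2^{-k} \left| \int (g-h)\, f_k \, \dif\P_n \right| \]
is a metric on $r\,\overline{B}$ which is jointly weak* continuous there, whence the identity map from $(r\,\overline{B},\sigma(L^q,L^p))$ onto $(r\,\overline{B},\rho)$ is a continuous bijection from a compact space onto a Hausdorff space, hence a homeomorphism. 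Thus $(r\,\overline{B},\sigma(L^q,L^p))$ is compact metrizable; its subspace $\Qc_n$ with the trace topology is then metrizable, and as a subspace of a (compact, hence) separable metric space it is separable, giving the first claim. If in addition $\Qc_n$ is weak* closed, then it is a closed subset of the compact set $r\,\overline{B}$, hence weak* compact; being also metrizable it is Polish, and in particular a Borel space, which is the second claim.

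The argument is essentially routine; the only points I expect to need care are the endpoint $q = \infty$, handled by the duality $L^\infty = (L^1)^*$ and separability of $L^1$, and the correct identification of ``$\P_n$ is a separable probability measure'' with separability of the measure algebra --- equivalently, of the spaces $L^p$ for $p < \infty$ --- since this is precisely what makes the weak* topology metrizable on norm-bounded sets.
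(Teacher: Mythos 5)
Your proposal is correct and follows essentially the same route as the paper: identify $\Qc_n$ with its densities in $L^q=(L^p)^*$, use separability of $\P_n$ (hence of $L^p$) to metrize the weak* topology on a norm-bounded set, apply Banach--Alaoglu for compactness, and conclude the Borel-space property for weak* closed $\Qc_n$ as a closed subset of a compact metrizable set. The only cosmetic differences are that you embed $\Qc_n$ in a closed ball $r\,\overline{B}$ and write out the metric explicitly, whereas the paper passes to the weak* closure of $\Qc_n^d$ (first checking it remains norm-bounded) and cites the metrizability result.
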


In our cost model we  allow for any norm-bounded ambiguity set $\Qc_n \subseteq \M^q(\Omega_n,\A_n,\P_n)$. For applications, a meaningful way of choosing $\Qc_n$ (within a norm bound) is to take all probability measures in $\M^q(\Omega_n,\A_n,\P_n)$  which are  close to $\P_n$ in a certain metric like e.g. the \emph{Wasserstein metric} (see e.g. \cite{yang2017convex}).  In our setting, that requires absolute continuity, the \emph{Kullback–-Leibler divergence} could be a natural choice.

The controller only knows that the transition kernel \eqref{eq:abstract_transition_kernel_product_space} at each stage is defined by some $\Q \in \Qc_{n+1}$ instead of $\P_{n+1}$ but not which one exactly. We assume here that the controller faces a dynamic game against nature. This means that nature reacts to the controller's action $a_n$ in scenario $h_n \in \H_n$ with a decision rule $\gamma_n: \H_n\times A \to \Qc_{n+1}$ where $a_n\in D_n(x_n)$. A \emph{policy of nature} is a sequence of such decision rules $\gamma = (\gamma_0,\dots,\gamma_{N-1})$. Let $\Gamma$ be the set of all policies of nature. Since nature is an unobserved theoretical opponent of the controller, her actions are not considered to be part of the history of the Markov Decision Process. A \emph{Markov decision rule of nature} at time $n$ is a measurable mapping $\gamma_n:D_n \to \Qc_{n+1}$ and a \emph{Markov policy of nature} is a sequence $\gamma = (\gamma_0,\dots,\gamma_{N-1})$ of such decision rules. The set of Markov policies of nature is denoted by $\Gamma^M$. Thus we are faced with a {\em Stackelberg game} where the controller is the mover and nature is the follower.

\begin{lemma}\label{thm:abstract_robust_kernel}
	For $n= 0,\dots,N-1$ a decision rule $\gamma_n:\H_n\times A \to \Qc_{n+1}$ induces a stochastic kernel from $\H_n\times A$ to $\Omega_{n+1}$ by
	\[ \gamma_n(B|h_n,a_n):= \gamma_n(h_n,a_n)(B), \qquad B \in \A_{n+1}, \ (h_n,a_n)\in \H_n\times A. \]
\end{lemma}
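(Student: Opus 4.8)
The plan is to verify directly the two defining properties of a stochastic kernel from $\H_n\times A$ to $\Omega_{n+1}$. The first property --- that $B\mapsto\gamma_n(B|h_n,a_n)=\gamma_n(h_n,a_n)(B)$ is a probability measure on $(\Omega_{n+1},\A_{n+1})$ for each fixed $(h_n,a_n)\in\H_n\times A$ --- is immediate from the construction, since by definition $\gamma_n(h_n,a_n)$ is an element of $\Qc_{n+1}\subseteq\M^q(\Omega_{n+1},\A_{n+1},\P_{n+1})\subseteq\M(\Omega_{n+1},\A_{n+1},\P_{n+1})$, and the latter is a family of probability measures on $(\Omega_{n+1},\A_{n+1})$.

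The substance of the proof is the second property: for each fixed $B\in\A_{n+1}$ the map $(h_n,a_n)\mapsto\gamma_n(B|h_n,a_n)$ must be measurable on $\H_n\times A$. I would factor this map as the composition of $\gamma_n\colon\H_n\times A\to\Qc_{n+1}$, which is measurable by definition of a decision rule of nature once $\Qc_{n+1}$ is equipped with the Borel $\sigma$-algebra of the weak* topology $\sigma(L^q,L^p)$, with the evaluation map $\mathrm{ev}_B\colon\Qc_{n+1}\to[0,1]$, $\Q\mapsto\Q(B)$. As compositions of measurable maps are measurable, it suffices to show that $\mathrm{ev}_B$ is Borel measurable, and for this I would in fact prove the stronger statement that $\mathrm{ev}_B$ is continuous. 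Using the identification \eqref{eq:densities} of $\Qc_{n+1}$ with a subset of $L^q(\Omega_{n+1},\A_{n+1},\P_{n+1})$, one has
\[ \mathrm{ev}_B(\Q)=\Q(B)=\int_{\Omega_{n+1}} 1_B\,\dif\Q=\int_{\Omega_{n+1}} 1_B\,\frac{\dif\Q}{\dif\P_{n+1}}\,\dif\P_{n+1}, \]
which is the dual pairing of the fixed element $1_B$ with the density $\dif\Q/\dif\P_{n+1}$. Since $\P_{n+1}$ is a probability measure, $1_B$ is bounded and hence lies in $L^p(\Omega_{n+1},\A_{n+1},\P_{n+1})$, where $\tfrac1p+\tfrac1q=1$; therefore $\mathrm{ev}_B$ is the restriction to $\Qc_{n+1}$ of the linear functional $g\mapsto\int 1_B\,g\,\dif\P_{n+1}$ on $L^q$, which is $\sigma(L^q,L^p)$-continuous by the very definition of the weak* topology. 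Consequently $\mathrm{ev}_B$ is continuous for the trace topology on $\Qc_{n+1}$, a fortiori Borel measurable, and the composition is measurable as required.

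I do not anticipate a genuine obstacle here; the only points needing care are keeping the identifications of measures with densities straight and observing that $1_B\in L^p$ for the relevant conjugate exponent $p$, which holds precisely because $\P_{n+1}$ is finite. Note that Lemma~\ref{thm:ambiguity_sep_metrizable} is not actually needed for this argument --- measurability of $\mathrm{ev}_B$ follows from weak* continuity alone --- although that lemma does guarantee that the Borel structure on $\Qc_{n+1}$ is the well-behaved one coming from a separable metric.
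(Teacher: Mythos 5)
Your proposal is correct and follows essentially the same route as the paper: the paper also notes that $\gamma_n(\cdot|h_n,a_n)$ is a probability measure by construction and then writes the map $(h_n,a_n)\mapsto\gamma_n(B|h_n,a_n)$ as the composition of the measurable decision rule with the evaluation functional $\Q\mapsto\E^\Q[1_B]$, which is weak*-continuous because $1_B\in L^p(\Omega_{n+1},\A_{n+1},\P_{n+1})$. Your additional remarks (spelling out the density identification and observing that Lemma~\ref{thm:ambiguity_sep_metrizable} is not needed) are accurate but do not change the argument.
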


For a proof of the lemma see the Appendix. In the sequel, it will be clear from the context where we refer to $\gamma_n$ as a decision rule or as a stochastic kernel. 
%Lemma \ref{thm:abstract_robust_kernel} is a well-known result, of which even the converse holds, for probability measures on Borel spaces, cf. Proposition 7.26 in \cite{BertsekasShreve1978}. However, here this is not applicable since firstly $(\Omega_{n+1},  \A_{n+1})$ can be any measurable space and secondly the Borel $\sigma$-algebra on $\Qc_{n+1}$ is induced by the weak* topology $\sigma(L^q,L^p)$ and not the topology of convergence in distribution.

The probability measure $\gamma_n(\cdot|h_n,a_n)$, which is unknown for the controller, now takes the role of $\P_{n+1}$ in defining the transition kernel of the Markov Decision Process in \eqref{eq:abstract_transition_kernel_product_space}. Let 
\begin{align}\label{eq:abstract_transition_kernel_by_nature}
	 Q_n^\gamma(B|h_n,a_n)=  \int 1_B\big(T(x_n,a_n,z_{n+1})\big) \gamma_n(\dif z_{n+1}|h_n,a_n), \; B \in \B(E), \ h_n \in \H_n, a_n\in D_n(x_n).
\end{align}
As in the case without ambiguity, the Theorem of Ionescu-Tulcea  yields that each initial state $x \in E$ and pair of policies of the controller and nature $(\pi,\gamma) \in \Pi^R \times \Gamma$ induce a unique law of motion 
\begin{align}\label{eq:abstract_robust_law_of_motion}
\Q^{\pi\gamma}_x := \delta_x \otimes \pi_0 \otimes Q_0^\gamma \otimes \pi_1 \otimes Q_1^\gamma \otimes \dots \otimes \pi_{N-1} \otimes Q_{N-1}^\gamma
\end{align}
on ${\H}_N$ satisfying
\begin{align*}
\Q^{\pi \gamma}_x(X_0 \in B) &= \delta_x(B),\\
\Q^{\pi \gamma}_x(A_n \in C|H_n=h_n) &= \pi_n(C|h_n),\\
\Q^{\pi \gamma}_x(X_{n+1} \in B|(H_n, A_n)=(h_n,a_n)) &= Q_n^\gamma(B|h_n,a_n)
\end{align*}
for all $B \in \B(E)$ and $C \in \B(A)$. In the usual way, we denote with $\E_{x}^{\pi\gamma}$ the expectation operator with respect to $\Q^{\pi\gamma}_x$ and with $\E_{nh_n}^{\pi\gamma}$ or $\E_{nx}^{\pi\gamma}$ the respective conditional expectation given $H_n=h_n$ or $X_n=x$.

The value of a policy pair $(\pi,\gamma) \in \Pi^R \times \Gamma$ at time $n=0,\dots, N-1$ is defined as
\begin{align}\label{eq:abstract_robust_policy_value}
\begin{aligned}
V_{N \pi \gamma}(h_N) &=  c_N(x_N), & h_N \in \H_N,\\
V_{n \pi \gamma}(h_n) &=  \E_{n h_n}^{\pi \gamma} \left[ \sum_{k=n}^{N-1} c_k(X_k,A_k,X_{k+1}) + c_N(X_N) \right], & h_n \in \H_n.
\end{aligned}
\end{align}	
Since the controller is unaware which probability measure in the ambiguity set determines the transition law in each scenario, he tries to minimize the expected cost under the assumption to be confronted with the most adverse probability measure. The value functions are thus given by
\begin{align*}
V_n(h_n) = \inf_{\pi \in \Pi^R} \sup_{\gamma \in \Gamma} \ V_{n\pi \gamma}(h_n), \qquad h_n \in \mathcal{H}_n,
\end{align*}
and the optimization objective is 
\begin{align}\label{eq:opt_crit_abstract_robust_finite}
V_0(x)=\inf_{\pi \in \Pi^R} \sup_{\gamma \in \Gamma} \ V_{0\pi\gamma}(x), \qquad x \in E.
\end{align}
In game-theoretic terminology this is the \emph{upper value of a dynamic zero-sum game}. If nature were to act first, i.e. if infimum and supremum were interchanged, one would obtain the game's \emph{lower value}. If the two values agree and the infima/suprema are attained, the game has a \emph{Nash equilibrium}, see also Section \ref{sec:abstract_robust_minimax}. But note here that players are asymmetric in information in our setting.

\begin{remark}
\cite{Iyengar2005} does not model nature to make active decisions, but instead defines the set of all possible laws of motion. When each law of motion is of the form \eqref{eq:abstract_robust_law_of_motion}, he calls the set \emph{rectangular}. %\cite{Shapiro2016} devotes an entire paper to the rectangularity property. 
Our approach with active decisions of nature based on  \cite{Gonzales2002} and \cite{JaskiewiczNowak2011} is needed to construct Markov kernels as in Lemma \ref{thm:abstract_robust_kernel} with probability measures from a given ambiguity set. When state and action spaces are countable as in \cite{Iyengar2005} the technical problem of measurability does not arise and one can directly construct an ambiguous law of motion by simply multiplying (conditional) probabilities. The rectangularity property is satisfied in our setting.
\end{remark}

Our model feature that there is no ambiguity in the transition functions is justified in many applications. Typically, transition functions describe a technical process or economic calculation which is known ex-ante and does not have to be estimated. The same applies to the cost function.

\section{Value Iteration and Optimal Policies}\label{sec:abstract_robust_finite}
In order to have well-defined value functions we need some integrability conditions. %Together with all other assumptions of this Section it is listed here.
\begin{assumptions} \phantomsection \label{ass:abstract_robust_finite}
	\begin{itemize}
%		\item[(i)] The model data has the Continuity and Compactness Properties \ref{ass:continuity_compactness} with the transition function $T_n$ being continuous in $(x,a)$ for all $n=0,\dots,N-1$ (case 1). 
		\item[(i)] There exist $\alpha, \ubar \epsilon, \bar \epsilon \geq 0$ with $\ubar \epsilon +\bar \epsilon =1, \alpha\neq 1$ and measurable functions $\ubar b: E \to (-\infty, -\ubar \epsilon]$ and $\bar b:E \to [\bar \epsilon,\infty)$ such that it holds for all $n=0,\dots,N-1$, $\Q \in \Qc_{n+1}$ and $(x,a)\in D_n$
		\begin{align*}
		\E^\Q\left[-c_n^-(x,a,T_n(x,a,Z_{n+1})) \right] &\geq \ubar b(x), &
		\E^\Q\left[ \ubar b(T_n(x,a,Z_{n+1}))  \right] &\geq \alpha \ubar b(x),\\
		\E^\Q\left[c_n^+(x,a,T_n(x,a,Z_{n+1})) \right] &\leq \bar b(x), &
		\E^\Q\left[ \bar b(T_n(x,a,Z_{n+1}))  \right] &\leq \alpha \bar b(x).
		\end{align*}
		Furthermore, it holds $\ubar b(x) \leq c_N(x) \leq \bar b(x)$ for all $x \in E$.
		\item[(ii)] We define $b:E \to [1,\infty), \ b(x) :=\bar b(x) - \ubar b(x)$. For all $n=0,\dots,N-1$ and $(\bar x, \bar a) \in D_n$ there exists an $\epsilon >0$ and measurable functions $\Theta_{n,1}^{\bar x, \bar a}, \Theta_{n,2}^{\bar x, \bar a}: \Z \to \R_+$ such that $\Theta_{n,1}^{\bar x, \bar a}(Z_{n+1}), \Theta_{n,2}^{\bar x, \bar a}(Z_{n+1}) \in L^p(\Omega_{n+1},\A_{n+1},\P_{n+1})$ and
		\begin{align*}
		|c_n(x,a,T_n(x,a,z))| \leq \Theta_{n,1}^{\bar x, \bar a}(z), \qquad \qquad  b(T_n(x,a,z)) \leq \Theta_{n,2}^{\bar x, \bar a}(z)
		\end{align*}
		for all  $z \in \Z$ and $(x,a) \in B_\epsilon(\bar x, \bar a) \cap D_n$. Here, $B_\epsilon(\bar x, \bar a)$ is the closed ball around $(\bar x, \bar a)$ w.r.t.\ an arbitrary product metric on $E\times A$.
		%Für Anwendung von Fatou's Lemma und verallg. Fubini würde zwar eine untere L^p-Schranke reichen, aber für die gemeinsame Stetigkeit der Dualität muss der Integrand in L^p liegen. Deshalb braucht man zweiseitige Schranke wie bei risk-sensitive (dort wegen Fatou-Eigenschaft).
%		\item[(iv)] The probability measure $\P_{n}$ on $(\Omega_n,\A_n)$ is separable for all $n=1,\dots,N$.
		\item[(iii)] The ambiguity sets $\Qc_n$ are norm bounded, i.e.\ there exists $K \in [1,\infty)$ such that
		\[ \E \left| \frac{\dif \Q}{\dif \P_n} \right|^q \leq K  \]
		for all $n=1,\dots,N$ and $\Q \in \Qc_n$.
	\end{itemize}
\end{assumptions} 

\begin{remark}\phantomsection\label{rem:abstract_robust_finite_assumptions}
	\begin{enumerate}
		\item $\ubar b, \bar b$ are called \emph{lower} and \emph{upper bounding function}, respectively, while $b$ is referred to as \emph{bounding function.} As the absolute value is the sum of positive and negative part, $b$ satisfies for all $n=0,\dots,N-1$, $\Q \in \Qc_n$ and $ (x,a) \in D_n$:
		$$ \E^\Q\left[|c_n(x,a,T_n(x,a,Z_{n+1}))| \right] \leq  b(x) \qquad \text{and} \qquad \E^\Q\left[  |b(T_n(x,a,Z_{n+1}))|  \right] \leq \alpha  b(x)$$
		\item Assumptions \ref{ass:abstract_robust_finite} (i) and (ii) are satisfied with $-\ubar b=\bar b=constant>0$ if the cost functions are bounded.
		\item If $p=1$ and hence $q=\infty$, it is technically sufficient if part (ii) of Assumption \ref{ass:abstract_robust_finite} holds under the reference probability measure $\P_n$. Using H\"older's inequality and part (iii) we get for every $\Q \in \Qc_{n+1}$
		\begin{align*}
		\E^\Q\left[-c_n^-(x,a,T_n(x,a,Z_{n+1})) \right] &\geq \E\left[-c_n^-(x,a,T_n(x,a,Z_{n+1})) \right] \esssup \frac{\dif \Q}{\dif \P_{n+1}} \geq K  \ubar b(x),\\
		\E^\Q\left[ \ubar b(T_n(x,a,Z_{n+1}))  \right] &\geq \E\left[ \ubar b(T_n(x,a,Z_{n+1}))  \right] \esssup \frac{\dif \Q}{\dif \P_{n+1}} \geq  \alpha K \ubar b(x)
		\end{align*}
		and analogous results for the upper bounding function. I.e.\ one simply has to replace $\alpha$ by $K \alpha$. However, the factor $K \alpha$ may be unnecessarily crude. 
	\end{enumerate}
\end{remark}

The next lemma shows that due to Assumption \ref{ass:abstract_robust_finite} (i) the value \eqref{eq:abstract_robust_policy_value} of a policy pair $(\pi,\gamma) \in \Pi^R \times \Gamma$ is well-defined at all stages $n=0,\dots,N$. One can see that the existence of either a lower or an upper bounding function is sufficient for the policy value to be well-defined. However, for the existence of an optimal policy pair we will need the integral to exist with finite value and therefore require both a lower and an upper bounding function.

\begin{lemma}\label{thm:abstract_robust_bounds}
	Under Assumption \ref{ass:abstract_robust_finite} it holds for all policy pairs $(\pi,\gamma) \in \Pi^R \times \Gamma$, time points $n=0,\dots, N$ and histories $h_n \in \mathcal{H}_n$
%	\begin{itemize}
%		\item[(i)] 
  \[   \frac{1-\alpha^{N+1-n}}{1-\alpha}  \ubar b(x_n) \le V_{n \pi \gamma}(h_n)  \leq \frac{1-\alpha^{N+1-n}}{1-\alpha}  \bar b(x_n). \]
%		\item[(iii)] \[ V_{n \pi \gamma}(h_n) \leq  \E_{n h_n}^{\pi \gamma}\left[ \sum_{k=n}^{N-1} |c_k(X_k,A_k,X_{k+1})| + | c_N(X_N)| \right] \leq \frac{1-\alpha^{N+1-n}}{1-\alpha}   b(x_n). \]
%	\end{itemize}
\end{lemma}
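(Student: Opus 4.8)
The plan is to prove the two-sided bound by backward induction on $n$, starting from $n=N$ and descending to $n=0$. For the base case $n=N$, the claim reads $\frac{1-\alpha}{1-\alpha}\ubar b(x_N) \le c_N(x_N) \le \frac{1-\alpha}{1-\alpha}\bar b(x_N)$, i.e. $\ubar b(x_N)\le c_N(x_N)\le \bar b(x_N)$, which is exactly the last line of Assumption \ref{ass:abstract_robust_finite}(i). So the base case is immediate.

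For the induction step, I would fix $n\in\{0,\dots,N-1\}$, a history $h_n\in\H_n$ ending in state $x_n$, and a policy pair $(\pi,\gamma)\in\Pi^R\times\Gamma$, and use the tower property to peel off the first stage. Concretely, from the definition \eqref{eq:abstract_robust_policy_value} and the Markov property of the law of motion \eqref{eq:abstract_robust_law_of_motion},
\[
V_{n\pi\gamma}(h_n) = \E_{nh_n}^{\pi\gamma}\Big[ c_n(x_n,A_n,X_{n+1}) + V_{n+1,\pi\gamma}(H_{n+1}) \Big],
\]
where $H_{n+1}=(h_n,A_n,X_{n+1})$. Applying the induction hypothesis to the inner term $V_{n+1,\pi\gamma}(H_{n+1})$ bounds it above by $\frac{1-\alpha^{N-n}}{1-\alpha}\bar b(X_{n+1})$ and below by $\frac{1-\alpha^{N-n}}{1-\alpha}\ubar b(X_{n+1})$. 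Then I take conditional expectations; since $X_{n+1}=T_n(x_n,A_n,Z_{n+1})$ and, given $H_n=h_n$ and $A_n=a_n$, the law of $Z_{n+1}$ is $\gamma_n(\cdot|h_n,a_n)\in\Qc_{n+1}$, I can invoke Assumption \ref{ass:abstract_robust_finite}(i): $\E^\Q[c_n^+(x_n,a_n,T_n(\cdot))]\le\bar b(x_n)$, $\E^\Q[-c_n^-(x_n,a_n,T_n(\cdot))]\ge\ubar b(x_n)$, $\E^\Q[\bar b(T_n(\cdot))]\le\alpha\bar b(x_n)$, and $\E^\Q[\ubar b(T_n(\cdot))]\ge\alpha\ubar b(x_n)$. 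Combining the cost bound with the bounding-function bound (and noting the geometric-series identity $1 + \alpha\cdot\frac{1-\alpha^{N-n}}{1-\alpha} = \frac{1-\alpha^{N+1-n}}{1-\alpha}$) gives, after integrating out $A_n$ against the kernel $\pi_n(\cdot|h_n)$, the desired upper bound $V_{n\pi\gamma}(h_n)\le \frac{1-\alpha^{N+1-n}}{1-\alpha}\bar b(x_n)$, and symmetrically the lower bound.

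The one genuinely delicate point — and where I would slow down — is the sign bookkeeping when $\alpha>1$: then $\frac{1-\alpha^{N-n}}{1-\alpha}>0$ still, so the inequalities propagate in the same direction, but one must be careful that $\ubar b\le 0\le \bar b$ is not assumed pointwise (only $\ubar b\le -\ubar\epsilon$ and $\bar b\ge\bar\epsilon$ with $\ubar\epsilon,\bar\epsilon\ge0$, $\ubar\epsilon+\bar\epsilon=1$), and that multiplying the bounding-function inequality $\E^\Q[\ubar b(T_n(\cdot))]\ge\alpha\ubar b(x_n)$ by the positive constant $\frac{1-\alpha^{N-n}}{1-\alpha}$ preserves the direction regardless of whether $\alpha<1$ or $\alpha>1$. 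I should also explicitly justify that the conditional expectations may be split into positive and negative parts without running into $\infty-\infty$: Assumption \ref{ass:abstract_robust_finite}(i) gives that $\E^\Q[c_n^+(x_n,a_n,T_n(\cdot))]\le\bar b(x_n)<\infty$ and $\E^\Q[c_n^-(x_n,a_n,T_n(\cdot))]\le -\ubar b(x_n)<\infty$, and likewise the terminal and bounding terms are integrable, so every expectation in the recursion is finite and the manipulations are legitimate; this is exactly the content of Remark \ref{rem:abstract_robust_finite_assumptions}(1), which I would cite. A minor additional step is verifying that the tower/Markov decomposition above is valid for a general randomized policy $\pi$ and a general (history-dependent) nature policy $\gamma$ — this follows from the Ionescu–Tulcea construction of $\Q^{\pi\gamma}_x$ recorded just before \eqref{eq:abstract_robust_policy_value}, so I would simply reference it rather than reprove it.
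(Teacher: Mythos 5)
Your argument is correct in substance but follows a genuinely different route from the paper. You do backward induction on $n$, peeling off the first stage with the tower property, applying the inductive two-sided bound to $V_{n+1\pi\gamma}(H_{n+1})$, and using the one-stage drift inequalities of Assumption \ref{ass:abstract_robust_finite}(i) together with $1+\alpha\frac{1-\alpha^{N-n}}{1-\alpha}=\frac{1-\alpha^{N+1-n}}{1-\alpha}$. The paper instead never decomposes the value recursively at this point: it bounds $V_{n\pi\gamma}(h_n)$ below by the sum of the expectations of the terms $-c_k^-$ (and $-c_N^-$), and then for each fixed $k$ iterates the drift inequality \emph{forward} from stage $n$ to stage $k$ to get $\E_{nh_n}^{\pi\gamma}[-c_k^-(X_k,A_k,X_{k+1})]\ge \alpha^{k-n}\ubar b(x_n)$, finally summing the geometric series; the upper bound is symmetric. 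Your route is arguably more streamlined and foreshadows the cost iteration of Proposition \ref{thm:abstract_robust_value_iteration}; the paper's route has the advantage that it only ever takes expectations of one-signed quantities, so no integrability of the full signed sum is ever needed.

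That is exactly the one point where your write-up needs shoring up. The identity $V_{n\pi\gamma}(h_n)=\E_{nh_n}^{\pi\gamma}[c_n(x_n,A_n,X_{n+1})+V_{n+1\pi\gamma}(H_{n+1})]$ via the tower property presupposes that $\sum_{k=n}^{N-1}c_k+c_N$ is quasi-integrable under the conditional law given $H_n=h_n$ — but establishing that the policy value is well defined is precisely part of what the lemma is for, and the one-stage finiteness you cite from Assumption \ref{ass:abstract_robust_finite}(i) (or Remark \ref{rem:abstract_robust_finite_assumptions}) does not by itself license the tower-property step on the multi-stage signed sum. The clean fix, consistent with your sketch, is to run the backward induction separately on the nonnegative quantities $\E_{nh_n}^{\pi\gamma}[\sum_{k\ge n}c_k^++c_N^+]$ and $\E_{nh_n}^{\pi\gamma}[\sum_{k\ge n}c_k^-+c_N^-]$ (for which tower property and monotone convergence apply without any integrability caveat), obtaining the upper bound $\frac{1-\alpha^{N+1-n}}{1-\alpha}\bar b(x_n)$ for the first and $-\frac{1-\alpha^{N+1-n}}{1-\alpha}\ubar b(x_n)$ for the second; this simultaneously yields well-definedness and the claimed two-sided bound. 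With that adjustment your proof is complete; the sign bookkeeping you flag for $\alpha>1$ is indeed harmless since $\frac{1-\alpha^{m}}{1-\alpha}=1+\alpha+\dots+\alpha^{m-1}>0$.
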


\begin{proof}
	We only prove the first inequality.  The second inequality  is analogous.  We use that
	\begin{align*}
		 V_{n \pi \gamma}(h_n)  \ge & \sum_{k=n}^{N-1} \E_{n h_n}^{\pi \gamma}\left[ -c_k^-(X_k,A_k,X_{k+1}) \right]+ \E_{N h_N}^{\pi \gamma}\left[-c_N^-(X_N)\right]
	\end{align*}
	and consider the summands individually. We have $\E_{N h_N}^{\pi \gamma}\left[-c_N^-(X_N)\right] \geq \E_{N h_N}^{\pi \gamma}\left[\ubar b(X_N)\right]$ by Assumption \ref{ass:abstract_robust_finite} (i). Since $\gamma_k$ is a mapping to $\Qc_{k+1}$ it follows from the first inequality of Assumption \ref{ass:abstract_robust_finite} (i) that
	\begin{align*}
	&\E_{n h_n}^{\pi \gamma}\left[ -c_k^-(X_k,A_k,X_{k+1}) \right]= \int \E_{k h_k}^{\pi \gamma}\left[ -c_k^-(X_k,A_k,X_{k+1}) \right] \Q_x^{\pi\gamma}(\dif h_{k}|H_n=h_n)\\
	&= \iiint  -c_k^-\big(x_k,a_k,T_k(x_k,a_k,z_{k+1})\big) \, \gamma_k(\dif z_{k+1}|h_k,a_k) \, \pi_k(\dif a_k|h_k) \ \Q_x^{\pi\gamma}(\dif h_{k}|H_n=h_n)\\
	&\geq \int \ubar b(x_k) \
	 \Q_x^{\pi\gamma}(\dif h_{k}|H_n=h_n)	= \E_{n h_n}^{\pi\gamma}\left[\ubar b(X_k)\right]
	\end{align*}
	for $k=n,\dots,N-1$. Now, the second inequality of Assumption \ref{ass:abstract_robust_finite} (i) yields for $k \geq n+1$
	\begin{align*}
	&\E_{n h_n}^{\pi\gamma}\left[\ubar b(X_k)\right]= \int \E_{k-1 h_{k-1}}\left[\ubar b(X_k)\right] \Q_x^{\pi\gamma}(\dif h_{k-1}|H_n=h_n)\\
	&= \iiint \! \ubar b\big(T_{k-1}(x_{k-1},a_{k-1},z_{k})\big) \, \gamma_{k-1}(\dif\! z_k|h_{k-1},a_{k-1})\, \pi_{k-1}(\dif\! a_{k-1}|h_{k-1}) \, \Q_x^{\pi\gamma}(\dif\! h_{k-1}|H_n\!=\!h_n)\\
	&\geq \alpha \int \ubar b(x_{k-1}) \Q_x^{\pi\gamma}(\dif h_{k-1}|H_n=h_n)= \alpha \E_{n h_{n}}^{\pi \gamma} \left[ \ubar b(X_{k-1}) \right].
	\end{align*}
	Iterating this argument we obtain
	\[ \E_{n h_n}^{\pi\gamma}\left[ -c_N^-(X_N) \right] \geq \alpha^{N-n}  \ubar b(x_n) \qquad \text{and} \qquad \E_{n h_n}^{\pi\gamma}\left[ -c_k^-(X_k,A_k,X_{k+1}) \right] \geq \alpha^{k-n} \ubar b(x_n). \]
	Finally, summation over $k$ yields the assertion.
%	\begin{align*}
%	\E_{n h_n}^{\pi \gamma}\left[ \sum_{k=n}^{N-1} -c_k^-(X_k,A_k,X_{k+1}) -c_N^-(X_N) \right]
%	\geq \ubar b(x_n) \sum_{k=n}^{N} \alpha^{k-n}
%	= \ubar b(x_n) \frac{1-\alpha ^{N+1-n}}{1-\alpha }. 
%	\end{align*}
%	as claimed. 
\end{proof}

With the bounding function $b$ we define the function space
\[ \BB_b := \left\{ v: E \to \R \ \vert \ v \text{ measurable with } \lambda \in \R_+ \text{ s.t. } |v(x)| \leq \lambda \,  b(x) \text{ for all } x \in E \right\}. \]
Endowing $\BB_b$ with the weighted supremum norm
\[ \|v\|_b := \sup_{x \in E} \frac{|v(x)|}{b(x)}. \]
makes $(\BB_b, \|\cdot\|_b )$ a Banach space, cf. Proposition 7.2.1 in \cite{HernandezLasserre1999}.

Having ensured that the value functions are well-defined, we can now proceed to derive the cost iteration. To ease notation we introduce the following operators.

\begin{definition}
 For functions $v: \H_{n+1}\to \R $ s.t.\ $v(h_n,a_n,\cdot) \in \BB_b$ for all $h_n\in\H_n, a_n\in D_n(x_n)$ and $n=0,\ldots,N-1$ let
\begin{eqnarray*}
&&\T_{n\pi_n\gamma_n}v(h_n) :=\\
&& \iint c_n\big(x_n,a_n,T_n(x_n,a_n,z_{n+1})\big)+
 v\big(h_n, a_n,T_n(x_n,a_n,z_{n+1})\big) \, \gamma_n(\dif z_{n+1}|h_n, a_n) \, \pi_n(\dif a_n|h_n)\\
 &&\T_{n\pi_n}v(h_n) :=\\
 && \int  \sup_{\Q \in \Qc_{n+1}} \int c_n\big(x_n,a_n,T_n(x_n,a_n,z_{n+1})\big)+
 v\big(h_n, a_n,T_n(x_n,a_n,z_{n+1})\big) \, \Q(\dif z_{n+1}) \, \pi_n(\dif a_n|h_n)
\end{eqnarray*}
\end{definition}

Note that the operators are monotone in $v$.

\begin{proposition}\label{thm:abstract_robust_value_iteration}
	Under Assumption \ref{ass:abstract_robust_finite} the value of a policy pair $(\pi,\gamma) \in \Pi^R \times \Gamma$ can be calculated recursively for $n=0,\dots,N$ and $h_n \in \H_n$ as
	\begin{align*}
	V_{N\pi\gamma}(h_N) &= c_N(x_N),\\
	V_{n \pi\gamma}(h_n) &=  \T_{n\pi_n\gamma_n} V_{n+1\pi\gamma}(h_n).
	%\iint c_n\big(x_n,a_n,T_n(x_n,a_n,Z_{n+1}(\omega))\big)\\
%	&\phantom{= \iint}\ + V_{n+1 \pi \gamma}\big(h_n,a_n,T(x_n,a_n,Z_{n+1}(\omega))%\big) \, \gamma_n(\dif \omega|h_n,a_n) \, \pi_n(\dif a_n|h_n).
	\end{align*}
\end{proposition}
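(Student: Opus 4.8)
The plan is to prove the recursion by backward induction on $n$, the base case $n=N$ being the definition \eqref{eq:abstract_robust_policy_value} of $V_{N\pi\gamma}$. The essential tool is the tower property of conditional expectation applied to the law of motion $\Q^{\pi\gamma}_x$ built by Ionescu-Tulcea in \eqref{eq:abstract_robust_law_of_motion}, together with the factorization of $\Q^{\pi\gamma}_x$ through the kernels $\pi_n$ and $Q_n^\gamma$. First I would fix $n\in\{0,\dots,N-1\}$ and $h_n\in\H_n$ and split the cost sum in \eqref{eq:abstract_robust_policy_value} as the stage-$n$ cost $c_n(X_n,A_n,X_{n+1})$ plus the remaining cost $\sum_{k=n+1}^{N-1}c_k(X_k,A_k,X_{k+1})+c_N(X_N)$. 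Conditioning on $(H_n,A_n,X_{n+1})=(h_n,a_n,x_{n+1})$ and using the Markov structure of the law of motion, the conditional expectation of the remaining cost is exactly $V_{n+1\pi\gamma}(h_n,a_n,x_{n+1})$ by the induction hypothesis (or rather its defining expression); this is where I must be careful that $V_{n+1\pi\gamma}$ depends only on the history $h_{n+1}=(h_n,a_n,x_{n+1})$ and not on future randomness, which is immediate from \eqref{eq:abstract_robust_policy_value}.

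Next I would integrate out $X_{n+1}$ and $A_n$ using the two kernels. By \eqref{eq:abstract_robust_law_of_motion}, conditionally on $(H_n,A_n)=(h_n,a_n)$ the variable $X_{n+1}$ has distribution $Q_n^\gamma(\cdot|h_n,a_n)$, and by the definition \eqref{eq:abstract_transition_kernel_by_nature} of $Q_n^\gamma$ this is the push-forward of $\gamma_n(\cdot|h_n,a_n)$ under $z\mapsto T_n(x_n,a_n,z)$. Hence
\[
\E_{nh_n}^{\pi\gamma}\!\left[c_n(X_n,A_n,X_{n+1})+V_{n+1\pi\gamma}(H_n,A_n,X_{n+1})\,\middle|\,A_n=a_n\right]
=\int \Big(c_n(x_n,a_n,T_n(x_n,a_n,z))+V_{n+1\pi\gamma}(h_n,a_n,T_n(x_n,a_n,z))\Big)\gamma_n(\dif z|h_n,a_n).
\]
Integrating the outer variable $A_n$ against $\pi_n(\cdot|h_n)$ then yields precisely $\T_{n\pi_n\gamma_n}V_{n+1\pi\gamma}(h_n)$, as in the definition of the operator. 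Combining the two displays gives $V_{n\pi\gamma}(h_n)=\T_{n\pi_n\gamma_n}V_{n+1\pi\gamma}(h_n)$.

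Two points need care. First, the whole argument presupposes that every integral appearing is well-defined and finite, so that Fubini/the tower property may be applied and the terms recombined; this is supplied by Lemma \ref{thm:abstract_robust_bounds} together with Assumption \ref{ass:abstract_robust_finite}, which bound $V_{n+1\pi\gamma}(h_{n+1})$ by $\frac{1-\alpha^{N-n}}{1-\alpha}$ times $\ubar b(x_{n+1})$ and $\bar b(x_{n+1})$, and the bounding-function inequalities of Remark \ref{rem:abstract_robust_finite_assumptions}(1) then show the integrands are $\gamma_n$- and $\pi_n$-integrable; in particular $V_{n+1\pi\gamma}(h_n,a_n,\cdot)\in\BB_b$, which is exactly the hypothesis under which $\T_{n\pi_n\gamma_n}$ was defined. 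Second, one must disintegrate $\Q^{\pi\gamma}_x$ correctly: the conditional distribution of $H_{n+1}$ given $H_n=h_n$ is $\pi_n(\dif a_n|h_n)\,Q_n^\gamma(\dif x_{n+1}|h_n,a_n)$, a standard consequence of the Ionescu–Tulcea construction, and I would invoke this explicitly. I expect the main obstacle to be purely bookkeeping: ensuring the measurability and integrability needed to justify the repeated use of Fubini's theorem and the disintegration, rather than any conceptual difficulty — the identity itself is the expected dynamic-programming one-step decomposition.
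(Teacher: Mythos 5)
Your proposal is correct and follows essentially the same route as the paper: backward induction with the base case $n=N$ from the definition, then the tower property of conditional expectation and the disintegration of $\Q^{\pi\gamma}_x$ through the kernels $\pi_n$ and $Q_n^\gamma$ (the latter being the push-forward of $\gamma_n$ under $T_n$), which gives exactly $\T_{n\pi_n\gamma_n}V_{n+1\pi\gamma}(h_n)$. Your extra care about integrability via Lemma \ref{thm:abstract_robust_bounds} is consistent with the paper, which relies on that lemma to ensure the policy values are well-defined before stating the iteration.
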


\begin{proof}
	The proof is by backward induction. At time $N$ there is nothing to show. Now assume the assertion holds for $n+1$, then the tower property of conditional expectation yields for  $n$
	\begin{align*}
	&V_{n \pi \gamma}(h_n)	
	%= \E_{n h_n}^{\pi \gamma} \left[ c_n(X_n,A_n,X_{n+1}) + \sum_{k=n+1}^{N-1}  %c_k(X_k,A_k,X_{k+1}) +c_N(X_N)\right]\\
%	 &=  \E_{n h_n}^{\pi \gamma} \left[ \sum_{k=n}^{N-1} c_k(X_k,A_k,X_{k+1}) \right]\\
	= \E_{n h_n}^{\pi \gamma} \left[ c_n(X_n,A_n,X_{n+1}) + \E_{n+1 h_n A_n X_{n+1}}^{\pi\gamma} \left[ \sum_{k=n+1}^{N-1} c_k(X_k,A_k,X_{k+1}) +c_N(X_N)\right] \right]\\
	&= \iint c_n(x_n,a_n,x') + \E_{n+1 h_n a_n x'}^{\pi\gamma} \left[ \sum_{k=n+1}^{N-1} c_k(X_k,A_k,X_{k+1}) +c_N(X_N)\right]  Q_n^\gamma(\dif x'|h_n, a_n) \, \pi_n(\dif a_n|h_n)\\
	&  =\T_{n\pi_n\gamma_n} V_{n+1\pi\gamma}(h_n)
%	&= \iint c_n(x_n,a_n,x') + V_{n+1 \pi \gamma}(h_{n+1}) \ Q_n^{\pi,\gamma}(\dif %x_{n+1}|h_n, a_n) \, \pi_n(\dif a_n|h_n)\\
%	&= \iint c_n\big(x_n,a_n,T_n(x_n,a_n,Z_{n+1}(\omega))\big)\\
%	&\phantom{= \iint}\ + V_{n+1 \pi \gamma}\big(h_n, a_n,T_n(x_n,a_n,Z_{n+1}%(\omega))\big) \, \gamma_n(\dif \omega|h_n, a_n) \, \pi_n(\dif a_n|h_n)
	\end{align*} 
	for all $h_n \in \H_n$.
\end{proof}

Now, we evaluate a policy of the controller under the worst-case scenario regarding nature's reaction.  We define the \emph{robust value of a policy} $\pi \in \Pi^R$ at time $n=0,\dots, N-1$ as
\[ V_{n\pi}(h_n) = \sup_{\gamma \in \Gamma} V_{n\pi\gamma}(h_n), \quad h_n \in \mathcal{H}_n. \]
To minimize this quantity is the controller's optimization objective. For the robust policy value, a cost iteration holds, too. With regard to a policy of nature this is a Bellman equation given a fixed policy of the controller.

\begin{theorem}\label{thm:abstract_robust_nature_bellman}
	Let Assumptions \ref{ass:continuity_compactness}, \ref{ass:abstract_robust_finite} be satisfied.
	\begin{itemize}
		\item[a)] The robust value of a policy  $\pi \in \Pi^R $ is a measurable function of $h_n \in \H_n$ for $n=0,\dots,N-1$. It can be calculated recursively as
		\begin{alignat*}{3}
		V_{N \pi}(h_N) &= c_N(x_N),\\
		V_{n \pi}(h_n) &=  \T_{n\pi_n} V_{n+1\pi}(h_n)
		%\int \sup_{\Q \in \Qc_{n+1}} \int && c_n\big(x_n,a_n,T_n(x_n,a_n,Z_{n+1}(\omega))\big)\\
%		&&& + V_{n+1\pi}\big(h_n,a_n,T_n(x_n,a_n,Z_{n+1}(\omega))\big) \, \Q(\dif \omega) \, \pi_n(\dif a_n|h_n).
		\end{alignat*}
		\item[b)] If the ambiguity sets $\Qc_{n+1}$ are weak* closed, there exist  maximizing decision rules $\gamma_n^*$ of nature for all $n$. %$n=0,\dots,N-1$.
		%, i.e.
	%	\begin{alignat*}{3}
	%	V_{n \pi}(h_n) &=  \iint&& c_n\big(x_n,a_n,T_n(x_n,a_n,Z_{n+1}(\omega))\big)\\
	%	&&& + V_{n+1\pi}\big(h_n, a_n,T_n(x_n,a_n,Z_{n+1}(\omega))\big) \, \gamma_n^*(\dif \omega|h_n, a_n) \, \pi_n(\dif a_n|h_n).
	%	\end{alignat*}
		Each sequence of such decision rules $\gamma^*=(\gamma_1^*,\dots,\gamma_{N-1}^*)\in \Gamma$ is an optimal response of nature to the controller's policy, i.e. $ 	V_{n \pi} = V_{n \pi\gamma^*}$, $ n=0,\dots,N-1. $
	\end{itemize}
\end{theorem}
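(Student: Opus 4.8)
The plan is to run a backward induction in $n$ around an explicitly constructed candidate and then identify it with $V_{n\pi}$ by sandwiching. Set $\tilde V_{N\pi}:=c_N$ and $\tilde V_{n\pi}:=\T_{n\pi_n}\tilde V_{(n+1)\pi}$ for $n<N$. First I would verify by backward induction that every $\tilde V_{n\pi}$ is well defined and measurable and satisfies the same two-sided bound in terms of $\ubar b,\bar b$ as in Lemma \ref{thm:abstract_robust_bounds}: the bound propagates by literally the computation in that proof, now applied to the one-step worst case via Assumption \ref{ass:abstract_robust_finite}(i) and Remark \ref{rem:abstract_robust_finite_assumptions}, and in particular it gives $\tilde V_{(n+1)\pi}\in\BB_b$ with finite weighted norm and $\tilde V_{(n+1)\pi}(h_n,a_n,\cdot)\in\BB_b$, so that $\T_{n\pi_n}$ is applicable. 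The measurability step is the technical core: for fixed $(h_n,a_n)$ the map $\Q\mapsto\int\big[c_n(x_n,a_n,T_n(x_n,a_n,z))+\tilde V_{(n+1)\pi}(h_n,a_n,T_n(x_n,a_n,z))\big]\,\Q(\dif z)$ is continuous for the weak* topology $\sigma(L^q,L^p)$ on $\Qc_{n+1}$, because by Assumption \ref{ass:abstract_robust_finite}(ii) the integrand is, locally in $(x_n,a_n)$, dominated by a function in $L^p(\Omega_{n+1},\A_{n+1},\P_{n+1})$, against which weak* convergence of densities tests. Since $\Qc_{n+1}$ is separable metrizable by Lemma \ref{thm:ambiguity_sep_metrizable}, the supremum over $\Qc_{n+1}$ equals the supremum over a fixed countable dense subset, so $(h_n,a_n)\mapsto\sup_{\Q\in\Qc_{n+1}}\int[\dots]\,\Q(\dif z)$ is a countable supremum of measurable functions, hence measurable; integrating against the kernel $\pi_n$ gives measurability of $\tilde V_{n\pi}$ in $h_n$.

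Next I would prove $\tilde V_{n\pi}=V_{n\pi}$ for all $n$ by two inequalities. For ``$\ge$'': fix an arbitrary $\gamma\in\Gamma$ and show $V_{k\pi\gamma}(h_k)\le\tilde V_{k\pi}(h_k)$ for $k=N,N-1,\dots$ by backward induction, using Proposition \ref{thm:abstract_robust_value_iteration} to unfold $V_{k\pi\gamma}=\T_{k\pi_k\gamma_k}V_{(k+1)\pi\gamma}$, the monotonicity of $\T_{k\pi_k\gamma_k}$ together with the induction hypothesis, and finally $\gamma_k(\cdot|h_k,a_k)\in\Qc_{k+1}$, so that the inner integral is dominated by the supremum in the definition of $\T_{k\pi_k}$; taking $\sup_\gamma$ gives $V_{n\pi}\le\tilde V_{n\pi}$. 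For ``$\le$'': given $\epsilon>0$, choose at each stage $k$ a measurable decision rule $\gamma_k^\epsilon:\H_k\times A\to\Qc_{k+1}$ that is $\epsilon$-optimal in the supremum appearing in $\T_{k\pi_k}\tilde V_{(k+1)\pi}$; a backward induction of the same type then yields $V_{n\pi\gamma^\epsilon}(h_n)\ge\tilde V_{n\pi}(h_n)-(N-n)\epsilon$, and letting $\epsilon\downarrow0$ gives $V_{n\pi}\ge\tilde V_{n\pi}$. Hence $V_{n\pi}=\tilde V_{n\pi}$ is measurable and satisfies the stated recursion, proving part a).

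For part b), weak* closedness upgrades each $\Qc_{n+1}$ to a compact Borel space by Lemma \ref{thm:ambiguity_sep_metrizable}, so the weak*-continuous map $\Q\mapsto\int[c_n+V_{(n+1)\pi}]\,\Q(\dif z)$ attains its maximum on $\Qc_{n+1}$ for every $(h_n,a_n)$. Since that integrand is a Carath\'eodory function on $(\H_n\times A)\times\Qc_{n+1}$ (measurable in $(h_n,a_n)$, continuous in $\Q$) and $\Qc_{n+1}$ is a separable metric space, a measurable maximizing decision rule $\gamma_n^*$ exists -- by a measurable maximum theorem, or elementarily by selecting the first element of a fixed enumeration of a countable dense set that attains the maximum. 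That any such $\gamma^*=(\gamma_0^*,\dots,\gamma_{N-1}^*)\in\Gamma$ is an optimal response then follows by one more backward induction: $V_{n\pi\gamma^*}(h_n)=\T_{n\pi_n\gamma_n^*}V_{(n+1)\pi\gamma^*}(h_n)=\T_{n\pi_n\gamma_n^*}V_{(n+1)\pi}(h_n)$ by the induction hypothesis and Proposition \ref{thm:abstract_robust_value_iteration}, and this equals $\T_{n\pi_n}V_{(n+1)\pi}(h_n)=V_{n\pi}(h_n)$ by the choice of $\gamma_n^*$ and part a).

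I expect the main obstacle to be the measurable-selection layer that the Borel (rather than countable) state and action spaces force upon us: one must establish the Carath\'eodory property of $(h_n,a_n,\Q)\mapsto\int[c_n+\tilde V_{(n+1)\pi}]\,\Q(\dif z)$, deduce the $\sigma(L^q,L^p)$-continuity in $\Q$ from Assumption \ref{ass:abstract_robust_finite}(ii) together with the definition of the weak* topology, and then manufacture measurable decision rules of nature with values in the prescribed ambiguity set that are $\epsilon$-optimal for part a) and exact for part b). The separable-metrizability and, in part b), compactness assertions of Lemma \ref{thm:ambiguity_sep_metrizable} are exactly what make these selections possible.
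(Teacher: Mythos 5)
Your proposal is correct in substance and follows the same overall scheme as the paper: a backward induction in which the recursion $V_{n\pi}=\T_{n\pi_n}V_{n+1\pi}$ is established by sandwiching with an $\epsilon$-optimal measurable response of nature, and part b) is obtained by upgrading $\epsilon$-maximizers to exact measurable maximizers once $\Qc_{n+1}$ is weak* compact. Where you differ is in the measurable-selection layer. The paper cites the optimal selection theorems of Rieder (1978): Theorem 3.2 there gives, for the jointly measurable $f(h_n,a_n,\Q)$, both the measurability of $(h_n,a_n)\mapsto\sup_{\Q}f$ and measurable $\epsilon$-maximizers for \emph{arbitrary} subsets $\Qc_{n+1}$ (no topology on $\Qc_{n+1}$ is needed for part a)), and Theorem 3.7 there gives exact maximizers using weak* compactness and the continuity of $\Q\mapsto f(h_n,a_n,\Q)$. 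You instead exploit from the start the $\sigma(L^q,L^p)$-continuity of $\Q\mapsto f(h_n,a_n,\Q)$ (valid, since Assumption \ref{ass:abstract_robust_finite}(ii) together with the bound $|V_{n+1\pi}|\le\lambda b$ puts the integrand in $L^p$) and the separable metrizability of $\Qc_{n+1}$ from Lemma \ref{thm:ambiguity_sep_metrizable}, reducing the supremum to a countable dense subset; this yields measurability of the sup and an explicit first-index $\epsilon$-selector without invoking Rieder. That is a legitimate, more hands-on alternative under the standing assumptions; what the paper's route buys is that part a) holds with no continuity input at all, which is what later allows arbitrary norm-bounded ambiguity sets, whereas your route is tied to the $L^p$-domination. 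Your organization of the sandwich (defining the candidate $\tilde V_{n\pi}$ by the recursion and comparing it to $V_{n\pi}$ from both sides, with the $(N-n)\epsilon$ bookkeeping) is equivalent to the paper's single chain of inequalities with an $\frac{\epsilon}{2}$-optimal continuation strategy.

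One point to fix: in part b) your ``elementary'' alternative --- picking the first element of a countable dense set \emph{that attains the maximum} --- does not work, because the (attained) maximum over the compact set $\Qc_{n+1}$ need not be attained at any point of the dense subset. Your primary route is the right one: $f$ is a Carath\'eodory-type function (jointly measurable, weak* continuous in $\Q$) and $\Qc_{n+1}$ is a compact metrizable Borel space by Lemma \ref{thm:ambiguity_sep_metrizable}, so a measurable maximum theorem (equivalently, Rieder's Theorem 3.7, as in the paper, or Proposition D.5 in Hern\'andez-Lerma--Lasserre) supplies the measurable maximizer $\gamma_n^*$; the concluding backward induction showing $V_{n\pi\gamma^*}=V_{n\pi}$ is then exactly as you wrote it.
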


\begin{proof}
%	\begin{enumerate}		\item 
The proof is by backward induction. At time $N$ there is nothing to show. Now assume the assertion holds at time $n+1$, i.e. that $V_{n+1\pi}$ is measurable and that for every $\epsilon > 0$ there exists an $\frac\epsilon 2$-optimal strategy $\hat \gamma = (\hat \gamma_{n+1}, \dots, \hat \gamma_{N-1})$ of nature. By Proposition \ref{thm:abstract_robust_value_iteration} we have at  $n$
\begin{eqnarray}\nonumber
V_{n\pi}(h_n) &=&   \sup_{\gamma \in \Gamma} V_{n\pi\gamma}(h_n)=\sup_{\gamma \in \Gamma} \T_{n\pi_n\gamma_n}V_{n+1\pi\gamma}(h_n) \le  \sup_{\gamma \in \Gamma} \T_{n\pi_n\gamma_n}V_{n+1\pi}(h_n)\le  \T_{n\pi_n}V_{n+1\pi}(h_n)\\ %\nonumber
&\le &\!\!\!  \T_{n\pi_n\hat \gamma_n}V_{n+1\pi}(h_n)+\frac{\varepsilon}{2} \le \T_{n\pi_n\hat \gamma_n}V_{n+1\pi \hat \gamma}(h_n)\! +\! \varepsilon =  V_{n\pi \hat \gamma}(h_n) \! +\! \epsilon		\leq V_{n\pi}(h_n) + \epsilon \label{eq:th3.6}
\end{eqnarray}
where  $\hat \gamma_n:\H_n\times A \to \Qc_{n+1}$ is a measurable $\frac\epsilon 2$-maximizer.

		Since $\epsilon > 0$ is arbitrary, equality holds. It remains to show the measurability of the outer integrand after the second inequality and the existence of an $\frac\epsilon 2$-maximizer. This follows from the optimal measurable selection theorem in  \cite{Rieder1978}:  To see this, first note that the function
		\[ f(h_n,a_n,\Q) = \int c_n\big(x_n,a_n,T_n(x_n,a_n,Z_{n+1})\big)+ V_{n+1\pi}\big(h_n,a_n,T_n(x_n,a_n,Z_{n+1})\big) \dif \Q,  \]
%		$(h_n,a_n)\in \H_n\times A, \ \Q \in \Qc_{n+1}$, 
is jointly measurable due to
		%. The integrand is a measurable function of $(h_n,a_n,\omega)$ as a composition of measurable functions and in $L^p(\Omega_{n+1},\A_{n+1},\P_{n+1})$ for every $(h_n,a_n)\in \H_n\times A$ by Lemmata \ref{thm:abstract_robust_bounds} (iii) and \ref{thm:abstract_robust_Lpbound}. Hence, Fubini's Theorem \ref{thm:A:Tonelli} yields that $(h_n,a_n) \mapsto f(h_n,a_n,\Q)$ is measurable for every $\Q \in \Qc_{n+1}$ and by the definition of the weak* topology $\Q \mapsto f(h_n,a_n,\Q)$ is continuous for every $(h_n,a_n) \in \H_n\times A$. I.e.\ $f$ is a Carath{\'e}odory function. Since $\Qc_{n+1}$ is a separable metrizable space (see Lemma \ref{thm:ambiguity_sep_metrizable}), 
		Lemma 4.51 in \cite{AliprantisBorder2006}.
		Consequently,
		\[ \left\{ (h_n,a_n,\Q) \in \H_n\times A \times \Qc_{n+1}: f(h_n,a_n,\Q) \geq \eta \right\} \in \left\{S\times Q: S \in \B(\H_n\times A), \ Q \subseteq \Qc_{n+1} \right\}. \]
		for every $\eta \in \R$. The right hand side is a selection class. Obviously, it holds
		\[ \H_n\times A \times \Qc_{n+1} \in \left\{S\times Q: S \in \B(\H_n\times A), \ Q \subseteq \Qc_{n+1} \right\}. \]
		Now, Theorem 3.2 in  \cite{Rieder1978} yields that
		\[ \H_n\times A \ni (h_n,a_n) \mapsto \sup_{\Q \in \Qc_{n+1}} f(h_n,a_n,\Q) \] 
		is measurable and for every $\epsilon >0$ there exists an $\epsilon$-maximizer $\gamma_n:\H_n\times A \to \Qc_{n+1}$.

	For part b) we have to show that there exists not only a $\epsilon$-maximizer $\hat \gamma_n$ at \eqref{eq:th3.6} but a maximizer. This follows from Theorem 3.7 in \cite{Rieder1978}. The additional requirements are that $\Qc_{n+1}$ is a separable metrizable space, which holds by Lemma \ref{thm:ambiguity_sep_metrizable}, and that the set $\{ \Q \in \Qc_{n+1}: f(h_n,a_n,\Q) \geq \eta \}$
		is compact for every $\eta \in \R$ and $(h_n,a_n) \in \H_n\times A$. By assumption, $\Qc_{n+1}$ is weakly closed and therefore compact by Lemma \ref{thm:ambiguity_sep_metrizable}. Since due to Assumption \ref{ass:abstract_robust_finite} the integrand of $f$ is in $L^p$, the mapping $\Q \mapsto f(h_n,a_n,\Q)$ is continuous for every $(h_n,a_n) \in \H_n\times A$. Hence,
		$ \{ \Q \in \Qc_{n+1}: f(h_n,a_n,\Q) \geq \eta \}$
		is closed as the preimage of a closed set. Since closed subsets of compact sets are compact, the proof is complete. \qedhere
%	\end{enumerate}
\end{proof}

So far we have only considered the case that the ambiguity set may depend on the time index but not on the state of the decision process. This covers many applications, e.g.\ the connection to risk measures (see  Section \ref{sec:special_sets}). Moreover, we can allow any norm bounded ambiguity sets as long as it is independent of the state using the optimal selection theorem by \cite{Rieder1978} in Theorem \ref{thm:abstract_robust_nature_bellman}. If the ambiguity set is weak* closed, the following generalization is possible.

\begin{corollary}
	For $n=0,\dots, N-1$ let $\Qc_n$ be weak* closed and 
		\[ D_n \ni (x,a) \mapsto \Qc_{n+1}(x,a) \subseteq \Qc_{n+1} \]
	be a non-empty and closed-valued  mapping giving the possible probability measures at time $n$ in state $x \in E$ if the controller chooses $a \in D_n(x)$. Then the assertion of Theorem \ref{thm:abstract_robust_nature_bellman} b) still holds.
\end{corollary}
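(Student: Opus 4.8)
The plan is to rerun the proof of Theorem~\ref{thm:abstract_robust_nature_bellman}, the only change being the measurable-selection step in which the worst-case reaction of nature is extracted. Recall from that proof the jointly measurable function
\[ f(h_n,a_n,\Q) = \int c_n\big(x_n,a_n,T_n(x_n,a_n,Z_{n+1})\big)+ V_{n+1\pi}\big(h_n,a_n,T_n(x_n,a_n,Z_{n+1})\big) \dif \Q \]
on $\H_n\times A\times\Qc_{n+1}$, and that, by Lemma~\ref{thm:ambiguity_sep_metrizable} and the $L^p$-integrability in Assumption~\ref{ass:abstract_robust_finite}, $\Qc_{n+1}$ is a compact metrizable space and $\Q\mapsto f(h_n,a_n,\Q)$ is weak* continuous for each fixed $(h_n,a_n)$. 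None of this is affected by the state-dependence of the ambiguity set; what changes is only that the inner supremum is now taken over the section $\Qc_{n+1}(x_n,a_n)$ instead of over all of $\Qc_{n+1}$.

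First I would record that, reading ``closed-valued mapping'' as a \emph{measurable} set-valued mapping (which, since $\Qc_{n+1}$ is compact and the values $\Qc_{n+1}(x,a)$ are closed, is equivalent to its graph being Borel), the set $G_n:=\{(x,a,\Q)\in D_n\times\Qc_{n+1}:\Q\in\Qc_{n+1}(x,a)\}$ is a Borel subset of $D_n\times\Qc_{n+1}$ with non-empty compact $(x,a)$-sections. Hence for every $\eta\in\R$ the set
\[ \big\{(h_n,a_n,\Q)\in\H_n\times A\times\Qc_{n+1}:(x_n,a_n,\Q)\in G_n,\ f(h_n,a_n,\Q)\ge\eta\big\} \]
is Borel, being the intersection of $\{f\ge\eta\}$ with the preimage of $G_n$ under the measurable map $(h_n,a_n,\Q)\mapsto(x_n,a_n,\Q)$, and each of its $(h_n,a_n)$-sections, namely $\Qc_{n+1}(x_n,a_n)\cap\{f(h_n,a_n,\cdot)\ge\eta\}$, is a closed and hence compact subset of $\Qc_{n+1}$. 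Thus all these sets, as well as $G_n$ itself, belong to the class of Borel subsets of $\H_n\times A\times\Qc_{n+1}$ with compact sections, which is a selection class in the sense of \cite{Rieder1978}.

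Using this selection class in place of the product class $\{S\times Q:S\in\B(\H_n\times A),\ Q\subseteq\Qc_{n+1}\}$ employed in the proof of Theorem~\ref{thm:abstract_robust_nature_bellman}, Theorem~3.2 of \cite{Rieder1978} shows that $(h_n,a_n)\mapsto\sup_{\Q\in\Qc_{n+1}(x_n,a_n)}f(h_n,a_n,\Q)$ is measurable, and, since the sections $\Qc_{n+1}(x_n,a_n)$ are compact and $\Q\mapsto f(h_n,a_n,\Q)$ is continuous, Theorem~3.7 of \cite{Rieder1978} produces a measurable maximizer $\gamma_n^*:\H_n\times A\to\Qc_{n+1}$ with $\gamma_n^*(h_n,a_n)\in\Qc_{n+1}(x_n,a_n)$ attaining the supremum. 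From here the backward induction of Theorem~\ref{thm:abstract_robust_nature_bellman} goes through verbatim: the chain of inequalities around \eqref{eq:th3.6} holds with $\Qc_{n+1}$ replaced by $\Qc_{n+1}(x_n,a_n)$ throughout, so the recursion of part~a) remains valid and the rules $\gamma^*=(\gamma_0^*,\dots,\gamma_{N-1}^*)$ satisfy $V_{n\pi}=V_{n\pi\gamma^*}$ for $n=0,\dots,N-1$.

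The only genuine obstacle lies in the second paragraph: one must check that replacing the product constraint $\H_n\times A\times\Qc_{n+1}$ by the graph $G_n$ still yields a legitimate selection class — i.e. that ``Borel with compact sections'' satisfies Rieder's axioms and, in particular, that the associated value function remains measurable. This is precisely the point where the classical projection theorem for Borel sets with compact sections, which underlies \cite{Rieder1978}, is invoked; everything else — continuity of $f$ in $\Q$, attainment of the supremum, and the induction — is inherited unchanged from Theorem~\ref{thm:abstract_robust_nature_bellman}.
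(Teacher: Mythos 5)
Your overall strategy is the same as the paper's: only the measurable-selection step at \eqref{eq:th3.6} needs to be redone, and the backward induction, the joint measurability of $f$ and its weak* continuity in $\Q$ are inherited unchanged. Where you differ is in the selection device itself. The paper simply observes that, since $\Qc_{n+1}$ is weak* compact (Lemma \ref{thm:ambiguity_sep_metrizable}) and the mapping $(x,a)\mapsto\Qc_{n+1}(x,a)$ is closed-valued, the mapping is compact-valued, and then cites Proposition D.5 in \cite{HernandezLasserre1996}: for a measurable compact-valued multifunction and a function that is jointly measurable and continuous in $\Q$, a measurable maximizer exists and the value is measurable. You instead stay inside Rieder's selection-class framework, replacing the product class used in Theorem \ref{thm:abstract_robust_nature_bellman} by the class of Borel subsets of $\H_n\times A\times\Qc_{n+1}$ with compact sections, which forces you to invoke the Novikov/Ars\'enin--Kunugui projection-and-uniformization theorem and to verify that this class satisfies Rieder's axioms for a selection class --- a step you explicitly flag but do not carry out. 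That verification can indeed be done (countable intersections preserve the class, and Borel sets with compact sections admit Borel uniformizations), and your reading of ``closed-valued mapping'' as a measurable multifunction with Borel graph is the right interpretation (it is also what \cite{HernandezLasserre1996} needs, so you make explicit a hypothesis the paper leaves implicit); but it buys nothing over the paper's route and imports noticeably heavier descriptive-set-theoretic machinery for a fact that the standard Carath\'eodory-type selection theorem delivers in one line once compact-valuedness is noted. If you want your version to be self-contained, you should either complete the check of Rieder's selection-class axioms or, more economically, simply cite a measurable selection theorem for measurable compact-valued multifunctions as the paper does.
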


\begin{proof}
	We have to show the existence of a measurable maximizer at \eqref{eq:th3.6}. The rest of the  proof is not affected. Since $\Qc_{n+1}$ is weak* closed, it is a compact Borel space by Lemma \ref{thm:ambiguity_sep_metrizable}. Consequently, the set-valued mapping $\Qc_{n+1}(\cdot)$ is compact-valued, as closed subsets of compact sets are compact. In the proof of the theorem it has been shown that the function $f(h_n,a_n,\Q)$ is jointly measurable and continuous in $\Q$. Hence, Proposition D.5 in \cite{HernandezLasserre1996} yields the existence of a measurable maximizer.
\end{proof}

State dependent ambiguity sets are a possibility to make the distributionally robust optimality criterion less conservative. E.g.\ they allow to incorporate learning about the unknown disturbance distribution. We refer the reader to \cite{Bielecki2019} for an interesting example where the ambiguity sets are recursive confidence regions for an unknown parameter of the disturbance distribution.

Let us now consider specifically deterministic Markov policies $\pi \in \Pi^M$ of the controller.  The subspace 
\[ \BB = \{ v \in \BB_b : \ v \text{ lower semicontinuous}  \}. \]
of $(\BB_b, \|\cdot\|_b )$ turns out to be the set of possible value functions under such policies. $(\BB, \|\cdot\|_b )$ is a complete metric space since the subset of lower semicontinuous functions is closed in $(\BB_b, \|\cdot\|_b )$. We define the following operators on $\BB_b$ and especially on $\BB$.

\begin{definition}\label{def:abstract_robust_operators}
	For $v \in \BB_b$ and Markov decision rules $d:E \to A$, $\gamma:D_n \to \Qc_{n+1}$ we define
	\begin{alignat*}{2}
		L_n v (x,a,\Q) &= \int c_n(x,a,T_n(x,a,Z_{n+1})) + v (T_n(x,a,Z_{n+1})) \dif \Q,\quad && (x,a,\Q) \in D_n\times \Qc_{n+1},\\
%		\hat L_n v (x,a) &= \sup_{\Q \in \Qc_{n+1}} L_n v(x,a,\Q),&& (x,a) \in D_n,\\
		\T_{n d \gamma} v(x) &= L_n v\big(x,d(x),\gamma(x,d(x))\big), && x \in E,\\
		\T_{n d} v(x) &= \sup_{\Q \in \Qc_{n+1}} L_n v(x,d(x),\Q), && x \in E,\\
		\T_n  v(x) &= \inf_{a \in D_n(x)} \sup_{\Q \in \Qc_{n+1}}  L_n v (x,a,\Q),&&x \in E.
	\end{alignat*}
\end{definition}

Note that the operators are monotone in $v$. Under Markov policies $\pi=(d_0, \dots, d_{N-1}) \in \Pi^M$ of the controller and $\gamma=(\gamma_0,\dots,\gamma_{N-1}) \in \Gamma^M$ of nature, the value iteration can be expressed with the help of these operators. In order to distinguish from the history-dependent case, we denote the value function here with $J$. Setting $J_{N\pi\gamma}(x)=c_N(x), \ x \in E,$ we obtain for $n=0, \dots, N-1$ and $x \in E$ with Proposition \ref{thm:abstract_robust_value_iteration}
\begin{align*}
J_{n\pi\gamma}(x) &= \int c_n\big(x,d_n(x),T_n(x,d_n(x),z_{n+1})\big) \\
& \phantom{= \int}\ + J_{n+1\pi\gamma} \big(T_n(x,d_n(x),z_{n+1})\big) \, \gamma_n(\dif z_{n+1}|x,d_n(x)) = \T_{nd_n\gamma_n} J_{n+1\pi\gamma}(x).
\end{align*} 
We define the robust value of Markov policy $\pi \in \Pi^M$ of the controller as
\begin{align*}
	J_{n\pi}(x) = \sup_{\gamma \in \Gamma^M} J_{n \pi \gamma}(x), \qquad x \in E.
\end{align*}
%and get the robust value iteration from Theorem \ref{thm:abstract_robust_nature_bellman}
%\begin{eqnarray}\label{eq:valueiteration} \nonumber
%J_{n\pi}(x) &=& \sup_{\Q \in \Qc_{n+1}} \int c_n\big(x,d_n(x),T_n(x,d_n(x),Z_{n+1})\big)  + J_{n+1\pi} \big(T_n(x,d_n(x),Z_{n+1})\big) \dif \Q \\
%&=& \T_{nd_n} J_{n+1\pi}(x).
%\end{eqnarray}

When calculating the robust value of a Markov policy of the controller it is no restriction to take the supremum only over Markov policies of nature.

\begin{corollary}\label{thm:abstract_robust_nature_markov}
	Let $\pi \in \Pi^M$. It holds for $n=0,\dots,N$ that $J_{n\pi}(x_n) = V_{n \pi}(h_n), \ h_n \in \H_n.$ I.e., we have the robust value iteration
	\begin{align*}
	J_{n\pi}(x) &= \sup_{\Q \in \Qc_{n+1}} \int c_n\big(x,d_n(x),T_n(x,d_n(x),Z_{n+1})\big)  + J_{n+1\pi} \big(T_n(x,d_n(x),Z_{n+1})\big) \dif \Q \\
	&= \T_{nd_n} J_{n+1\pi}(x).
	\end{align*}
	Moreover, there exists a Markovian $\epsilon$-optimal policy of nature and if the ambiguity sets $\Qc_{n+1}$ are all weak* closed even a Markovian optimal policy. 
\end{corollary}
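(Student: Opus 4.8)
The plan is to prove by backward induction on $n$ that $J_{n\pi}(x_n) = V_{n\pi}(h_n)$ for all $h_n \in \H_n$, simultaneously establishing the claimed robust value iteration. At the terminal stage $n=N$ both sides equal $c_N(x_N)$, so there is nothing to show. For the induction step, suppose $J_{n+1\pi}(x_{n+1}) = V_{n+1\pi}(h_{n+1})$ for all $h_{n+1} \in \H_{n+1}$; in particular $V_{n+1\pi}$ depends on $h_{n+1}$ only through $x_{n+1}$, so it agrees with the measurable function $J_{n+1\pi} \in \BB_b$. First I would apply Theorem \ref{thm:abstract_robust_nature_bellman} a): since $\pi \in \Pi^M$, the decision rule $\pi_n$ is the Dirac kernel in $d_n(x_n)$, and therefore
\[
V_{n\pi}(h_n) = \T_{n\pi_n} V_{n+1\pi}(h_n) = \sup_{\Q \in \Qc_{n+1}} L_n J_{n+1\pi}\big(x_n, d_n(x_n), \Q\big) = \T_{nd_n} J_{n+1\pi}(x_n),
\]
using Definition \ref{def:abstract_robust_operators} and the induction hypothesis to replace $V_{n+1\pi}$ by $J_{n+1\pi}$ inside $L_n$. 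The right-hand side is a function of $x_n$ alone; calling it $J_{n\pi}(x_n)$ gives the asserted recursion and the identity $J_{n\pi}(x_n) = V_{n\pi}(h_n)$. One small point to check here is that $\T_{nd_n} J_{n+1\pi} \in \BB_b$, so that the induction can continue; this follows from Assumption \ref{ass:abstract_robust_finite} and the bounds of Lemma \ref{thm:abstract_robust_bounds} (or equivalently from Remark \ref{rem:abstract_robust_finite_assumptions}), exactly as in the estimates already used for $V_{n\pi}$.

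It remains to identify this common value with $\sup_{\gamma \in \Gamma^M} J_{n\pi\gamma}$ and to produce Markovian (near-)optimal policies of nature. Since $\Gamma^M \subseteq \Gamma$ and, for Markov $\pi$ and Markov $\gamma$, the value $V_{n\pi\gamma}(h_n)$ coincides with $J_{n\pi\gamma}(x_n)$ by the value iteration identity displayed before the corollary, we immediately get $\sup_{\gamma \in \Gamma^M} J_{n\pi\gamma}(x_n) \le \sup_{\gamma \in \Gamma} V_{n\pi\gamma}(h_n) = V_{n\pi}(h_n)$. For the reverse inequality I would extract, from the proof of Theorem \ref{thm:abstract_robust_nature_bellman}, the $\epsilon$-maximizer at each stage: applying the selection theorem of \cite{Rieder1978} (Theorem 3.2 there) to the jointly measurable function $(x,a,\Q) \mapsto L_n J_{n+1\pi}(x,a,\Q)$ on $D_n \times \Qc_{n+1}$ yields a \emph{measurable} $\epsilon$-maximizer $\gamma_n^\epsilon : D_n \to \Qc_{n+1}$, which is by definition a Markov decision rule of nature. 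Composing with $d_n$ gives $\gamma_n^\epsilon(\cdot, d_n(\cdot)) : E \to \Qc_{n+1}$, and assembling these over $n$ produces $\gamma^\epsilon \in \Gamma^M$ with $J_{n\pi\gamma^\epsilon}(x_n) \ge J_{n\pi}(x_n) - \epsilon'$ for a suitably accumulated $\epsilon'$ (the standard geometric bookkeeping over the $N$ stages, using monotonicity of the operators and the $b$-weighted norm to control the propagated error). Letting $\epsilon' \downarrow 0$ gives the reverse inequality.

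Finally, when every $\Qc_{n+1}$ is weak* closed, Lemma \ref{thm:ambiguity_sep_metrizable} makes it a compact Borel space and, as shown in the proof of Theorem \ref{thm:abstract_robust_nature_bellman} b), $\Q \mapsto L_n J_{n+1\pi}(x,a,\Q)$ is continuous (the integrand lies in $L^p$), so the superlevel sets $\{\Q : L_n J_{n+1\pi}(x,a,\Q) \ge \eta\}$ are compact; Theorem 3.7 in \cite{Rieder1978} then delivers an actual measurable maximizer $\gamma_n^* : D_n \to \Qc_{n+1}$, and the sequence $\gamma^* = (\gamma_0^*,\dots,\gamma_{N-1}^*) \in \Gamma^M$ attains the supremum at every stage. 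I expect the only genuinely delicate point to be the measurability and selection step — verifying that the reduction to a state-dependent (rather than history-dependent) maximizer is legitimate — but this is essentially inherited verbatim from Theorem \ref{thm:abstract_robust_nature_bellman}, since for Markov $\pi$ the argument $(h_n,a_n)$ there enters only through $(x_n,a_n)$; everything else is routine propagation of the induction hypothesis through the monotone operators $\T_{nd_n}$.
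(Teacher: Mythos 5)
Your proposal is correct and follows essentially the same route as the paper: backward induction on $n$, invoking Theorem \ref{thm:abstract_robust_nature_bellman} with the Dirac kernel $\pi_n=\delta_{d_n(x_n)}$ to get $V_{n\pi}(h_n)=\T_{nd_n}J_{n+1\pi}(x_n)$, and then obtaining Markovian ($\epsilon$-)optimal responses of nature by rerunning the Rieder selection arguments with $\H_n\times A$ replaced by $D_n$. You simply spell out more explicitly the identification of this common value with $\sup_{\gamma\in\Gamma^M}J_{n\pi\gamma}$, which the paper leaves implicit.
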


\begin{proof}
	For $n=N$ the assertion is trivial. Assuming it holds at time $n+1$, it follows at time $n$ from Theorem \ref{thm:abstract_robust_nature_bellman} that
	\begin{align*}
		V_{n\pi}(h_n)&=\sup_{\Q \in \Qc_{n+1}} \int c_n\big(x,d_n(x),T_n(x,d_n(x),Z_{n+1})\big)  + J_{n+1\pi} \big(T_n(x,d_n(x),Z_{n+1})\big) \dif \Q\\
		&=J_{n\pi}(x_n).
	\end{align*}
	Replacing $\H_n\times A$ by $D_n$, the existence of ($\epsilon$-) optimal policies is guaranteed by the same arguments as in the proof of Theorem \ref{thm:abstract_robust_nature_bellman}. 
\end{proof}

Let us further define for $n=0, \dots, N$ the \emph{Markovian value function}
\begin{align*}
J_{n}(x) = \inf_{\pi \in \Pi^M} \sup_{\gamma \in \Gamma^M} J_{n\pi\gamma}(x) , \qquad x \in E.
\end{align*}
The next result shows that $J_n$ satisfies a Bellman equation and proves that an optimal policy of the controller exists and is Markov.

\begin{theorem}\label{thm:abstract_robust_finite}
	Let Assumptions \ref{ass:continuity_compactness}, \ref{ass:abstract_robust_finite} be satisfied.
	\begin{itemize}
		\item[a)] For $n=0, \dots, N-1$, it suffices to consider deterministic Markov policies both for the controller and nature, i.e. $V_n(h_n)=J_n(x_n)$ for all $h_n \in \mathcal{H}_n$. Moreover, $J_n\in \BB$ and satisfies the Bellman equation
		\begin{align*}
		J_N(x) &= c_N(x),\\
		J_n(x) &= \T_n J_{n+1}(x), \qquad x \in E.
		\end{align*}
		For $n= 0, \dots, N-1$ there exist Markov minimizers $d_n^*$ of $J_{n+1}$ and every sequence of such minimizers constitutes an optimal policy $\pi^*=(d_0^*,\dots,d_{N-1}^*)$ of the controller.
		\item[b)] If the  ambiguity sets $\Qc_n$ are weak* closed, there exist  maximizing decision rules $\gamma_n^*$ of nature, i.e. $J_{n}= \T_{d_n^* \gamma_n^*} J_{n+1}$ and every sequence of maximizers induces an optimal policy of nature $\gamma^*=(\gamma_0^*,\dots,\gamma_{N-1}^*) \in \Gamma^M$ satisfying $J_n = J_{n \pi^* \gamma^*}$.
	\end{itemize}
\end{theorem}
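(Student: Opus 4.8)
The plan is to argue by backward induction on $n$, exactly parallelling the structure of the proof of Theorem \ref{thm:abstract_robust_nature_bellman} but now carrying along the minimization over the controller's action as well. At time $N$ there is nothing to prove since $J_N = c_N \in \BB$ by Assumptions \ref{ass:continuity_compactness}(iii) and \ref{ass:abstract_robust_finite}(i). Assume the claim holds at $n+1$: $V_{n+1} = J_{n+1}$ on histories, $J_{n+1} \in \BB$, and (for part b) that a maximizing Markov decision rule of nature exists from stage $n+1$ on. The first step is to establish, for a fixed Markov decision rule $d$ of the controller, that the operator $\T_{nd}$ maps $\BB$ into $\BB$ and that $\T_n J_{n+1} \in \BB$; this rests on the measurable-selection / continuity machinery of \cite{Rieder1978} together with the compactness of $D_n(x)$ and the upper semicontinuity of $x \mapsto D_n(x)$ from Assumption \ref{ass:continuity_compactness}(i). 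Concretely, one shows that $(x,a,\Q) \mapsto L_n v(x,a,\Q)$ is lower semicontinuous in $(x,a)$ (using Fatou together with the local $L^p$-domination from Assumption \ref{ass:abstract_robust_finite}(ii) and the continuity of $T_n$ in $(x,a)$ from \ref{ass:continuity_compactness}(ii)), that $\sup_{\Q \in \Qc_{n+1}} L_n v(x,a,\Q)$ remains lower semicontinuous in $(x,a)$ (a supremum of l.s.c.\ functions is l.s.c.), and that $b$-boundedness is preserved with the recursive bound from Remark \ref{rem:abstract_robust_finite_assumptions}(1). Then $x \mapsto \inf_{a \in D_n(x)} \sup_{\Q} L_n J_{n+1}(x,a,\Q)$ is l.s.c.\ and a minimizing Markov decision rule $d_n^*$ exists by the standard selection theorem for l.s.c.\ functions over an u.s.c.\ compact-valued correspondence (e.g.\ Theorem 3.2 or the variant in \cite{Rieder1978}).

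The second step is the chain of inequalities establishing $V_n(h_n) = J_n(x_n)$. On one hand, by Theorem \ref{thm:abstract_robust_nature_bellman}a) and monotonicity of $\T_{n\pi_n}$ together with the induction hypothesis $V_{n+1\pi} \ge V_{n+1} = J_{n+1}$, one gets for any randomized $\pi$
\begin{align*}
V_{n\pi}(h_n) = \T_{n\pi_n} V_{n+1\pi}(h_n) \ge \T_{n\pi_n} J_{n+1}(h_n) \ge \inf_{a \in D_n(x_n)} \sup_{\Q \in \Qc_{n+1}} L_n J_{n+1}(x_n,a,\Q) = \T_n J_{n+1}(x_n),
\end{align*}
and taking the infimum over $\pi \in \Pi^R$ gives $V_n(h_n) \ge \T_n J_{n+1}(x_n)$. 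For the reverse direction, plug in the Markov policy $\pi^* = (d_0^*,\dots,d_{N-1}^*)$ built from the selected minimizers: by Corollary \ref{thm:abstract_robust_nature_markov} and the induction hypothesis, $V_{n\pi^*}(h_n) = J_{n\pi^*}(x_n) = \T_{nd_n^*} J_{n+1\pi^*}(x_n)$, and since $J_{n+1\pi^*} = J_{n+1}$ by the inductive optimality of the remaining tail (here one uses that $d_n^*$ is a minimizer of the Bellman operator, so the tail policy is optimal from $n+1$), this equals $\T_{nd_n^*} J_{n+1}(x_n) = \T_n J_{n+1}(x_n)$. Hence $V_n(h_n) \le V_{n\pi^*}(h_n) = \T_n J_{n+1}(x_n)$. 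Combining the two bounds yields $V_n(h_n) = \T_n J_{n+1}(x_n) = J_n(x_n)$, the Bellman equation, membership $J_n \in \BB$, and the optimality of $\pi^*$. Part b) then follows immediately: apply Theorem \ref{thm:abstract_robust_nature_bellman}b) (or Corollary \ref{thm:abstract_robust_nature_markov}) to the fixed optimal controller policy $\pi^*$ when the $\Qc_{n+1}$ are weak* closed, obtaining Markov maximizers $\gamma_n^*$ with $J_n = J_{n\pi^*\gamma^*}$ and $J_n = \T_{n d_n^* \gamma_n^*} J_{n+1}$.

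The main obstacle is the bookkeeping in the second half of the induction step: one must be careful that $J_{n+1\pi^*} = J_{n+1}$, i.e.\ that the tail of the constructed policy is genuinely optimal from stage $n+1$ onward — this is where the induction hypothesis must be phrased to include "every sequence of selected minimizers is optimal", not merely "the value satisfies the Bellman equation". A secondary technical point, already encountered in Theorem \ref{thm:abstract_robust_nature_bellman}, is verifying lower semicontinuity of $(x,a) \mapsto \sup_{\Q \in \Qc_{n+1}} L_n v(x,a,\Q)$ for $v \in \BB$; the delicate ingredient is interchanging the limit inferior in $(x,a)$ with the integral against a fixed $\Q$, which is handled by Fatou's lemma using the local $L^p$-bound $\Theta_{n,1}^{\bar x,\bar a}, \Theta_{n,2}^{\bar x,\bar a}$ of Assumption \ref{ass:abstract_robust_finite}(ii) together with Hölder's inequality and the norm bound \ref{ass:abstract_robust_finite}(iii) to control $\int |\,\cdot\,|\,\dif\Q$ uniformly over $\Qc_{n+1}$. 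Everything else is a routine application of the selection theorems already invoked in the preceding results.
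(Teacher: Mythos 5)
Your plan is correct and follows essentially the same route as the paper's proof: backward induction with the sandwich $V_n \ge \T_n J_{n+1} = \T_{nd_n^*}J_{n+1\pi^*} = J_{n\pi^*} \ge J_n \ge V_n$ built on Theorem \ref{thm:abstract_robust_nature_bellman} and Corollary \ref{thm:abstract_robust_nature_markov}, lower semicontinuity of $(x,a)\mapsto\sup_{\Q}L_nv(x,a,\Q)$ via Fatou with the local $L^p$-bound, and a measurable-selection argument over the compact-valued u.s.c.\ correspondence $x\mapsto D_n(x)$ (the paper invokes Proposition 2.4.3 of \cite{BaeuerleRieder2011} where you cite the equivalent selection machinery), with part b) reduced to Theorem \ref{thm:abstract_robust_nature_bellman}~b). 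You also correctly flag that the induction hypothesis must carry the optimality of every sequence of tail minimizers so that $J_{n+1\pi^*}=J_{n+1}$, which is exactly how the paper's argument works.
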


\begin{proof}
%	\begin{enumerate}		\item 
We proceed by backward induction. At time $N$ we have $V_N=J_N=c_N\in \BB$ due to semicontinuity and Assumption \ref{ass:abstract_robust_finite} (i). Now assume the assertion holds at time $n+1$. Using the robust value iteration (Corollary \ref{thm:abstract_robust_nature_markov}) we obtain at time $n$:
\begin{eqnarray*}
V_n(h_n) &=& \inf_{\pi \in \Pi^R} \sup_{\gamma \in \Gamma}  V_{n\pi\gamma}(h_n) = \inf_{\pi \in \Pi^R} V_{n\pi}(h_n)=\inf_{\pi \in \Pi^R} \T_{n\pi_n}  V_{n+1\pi}(h_n)\\
&\ge&  \inf_{\pi \in \Pi^R} \T_{n\pi_n}  V_{n+1}(h_n)= \inf_{\pi \in \Pi^R} \int \sup_{\Q \in \Qc_{n+1}} L_n J_{n+1}(x_n,a_n,\Q) \pi_n(\dif a_n|h_n)\\
&\ge & \inf_{a_n \in D_n(x_n)} \sup_{\Q \in \Qc_{n+1}}   L_n J_{n+1}(x_n,a_n,\Q) =\T_n J_{n+1}(x_n).
\end{eqnarray*}

	Here, objective and constraint depend on the history of the process only through $x_n$. Thus, given the existence of a minimizing Markov decision rule $d_n^*$ the last expression is equal to $\T_{n d_n^*} J_{n+1}(x_n)$. Again by the induction hypothesis, there exists an optimal Markov policy $\pi=(d_{n+1}^*,\dots,d_{N-1}^*) \in \Pi^M$ such that
\begin{eqnarray} \label{eq:thm3.10}
V_n(h_n)		\ge \T_n J_{n+1}(x_n)= \T_{n d_n^*} J_{n+1 \pi^*}(x_n)= J_{n\pi^*}(x_n)\geq J_n(x_n)\geq V_n(h_n).
\end{eqnarray}
		It remains to show the existence of a minimizing Markov decision rule $d_n^*$ and that $J_n \in \BB$. We want to apply Proposition 2.4.3 in \cite{BaeuerleRieder2011}. The set-valued mapping $E \ni x\mapsto D_n(x)$ is compact-valued and upper semicontinuous. Next, we show that $D_n \ni (x,a) \mapsto \sup_{\Q \in \Qc_{n+1}}   L_n v (x,a,\Q)$ is lower semicontinuous for every $v \in \BB$. Let $\{(x_k,a_k)\}_{k \in \N}$ be a convergent sequence in $D_n$ with limit $(x^*,a^*) \in D_n$. The function 
		\begin{eqnarray}\label{eq:thm3.10c} (x,a) \mapsto c_n\big(x,a,T_n(x,a,z)\big)+ v\big(T_n(x,a,z)\big)
		\end{eqnarray}
		 is lower semicontinuous for every $z\in \Z$. 
		 %Consequently,
%		\begin{align}\label{eq:abstract_robust_finite_proof_1}
%		& \liminf_{k \to \infty} c_n\big(x_k,a_k,T_n(x_k,a_k,Z_{n+1}(\omega))\big) + v\big(T_n(x_k,a_k,Z_{n+1}(\omega))\big) \notag\\
%		&\geq \  c_n\big(x^*,a^*,T_n(x^*,a^*,Z_{n+1}(\omega))\big) + v\big(T_n(x^*,a^*,Z_{n+1}(\omega))\big), & \omega \in \Omega. 
%		\end{align} 
		The sequence of random variables $\{C_k\}_{n \in \N}$ given by
		$$C_k:= c_n\big(x_k,a_k,T_n(x_k,a_k,Z_{n+1})\big) +  v\big(T_n(x_k,a_k,Z_{n+1})\big)$$
		is bounded by some $\bar C \in L^p(\Omega_{n+1},\A_{n+1},\P_{n+1})$ by Lemma \ref{thm:abstract_robust_Lpbound}. Now, Fatou's Lemma yields for every $\Q \in \Qc_{n+1}$
		\begin{align*}
		\liminf_{k \to \infty} L_nv(x_k,a_k,\Q)
		&=	\liminf_{k \to \infty} \E^{\Q}\Big[ c_n\big(x_k,a_k,T_n(x_k,a_k,Z_{n+1})\big)+		%& \phantom{=\liminf_{k \to \infty} \E^{\Q}\Big[}\ +
		 v\big(T_n(x_k,a_k,Z_{n+1})\big) \Big]\\
		&\geq \E^{\Q}\Big[ \liminf_{k \to \infty} c_n\big(x_k,a_k,T_n(x_k,a_k,Z_{n+1})\big)
%		&\phantom{\geq \E^{\Q}\Big[ \liminf_{k \to \infty}}\ 
+  v\big(T_n(x_k,a_k,Z_{n+1})\big) \Big]\\
		&\geq  \E^{\Q}\left[ c_n\big(x^*,a^*,T_n(x^*,a^*,Z_{n+1})\big) +  v\big(T_n(x^*,a^*,Z_{n+1})\big) \right]= L_nv(x^*,a^*,\Q),
		\end{align*}
		where the last inequality is by the lower semicontinuity of \eqref{eq:thm3.10c}. Thus, the function
		$ D_n \ni (x,a) \mapsto L_nv(x,a,\Q)$
		is lower semicontinuous for every $\Q \in \Qc_{n+1}$ and consequently $D_n \ni (x,a) \mapsto  \sup_{\Q \in \Qc_{n+1}}  L_n v (x,a,\Q)$ is lower semicontinuous as a supremum of lower semicontinuous functions. Now, Proposition 2.4.3 in \cite{BaeuerleRieder2011} yields the existence of a minimizing Markov decision rule $d_n^*$ at \eqref{eq:thm3.10} and that $J_n=\T_n J_{n+1}$ is lower semicontinuous. Furthermore, $J_n$ is bounded by $\lambda  b$ for some $\lambda \in \R_+$ due to Lemma \ref{thm:abstract_robust_bounds}. Thus $J_n \in \BB$.
Part b) follows from Theorem \ref{thm:abstract_robust_nature_bellman} b). \qedhere
%	\end{enumerate}
\end{proof}

\section{Real Line as State Space}\label{sec:abstract_robust_real}

The  model has been introduced in Section \ref{sec:abstract_cost_model} with a general Borel space as state space. In order to solve the distributionally robust cost minimization problem in Section \ref{sec:abstract_robust_finite} we needed a continuous transition function despite having a semicontinuous model, cf.\ the proof of Theorem \ref{thm:abstract_robust_finite}. This assumption on the transition function can be relaxed to semicontinuity if the state space is the real line and the transition and one-stage cost function have some form of monotonicity. In some applications  this relaxation of the continuity assumption is relevant. Furthermore, a real state space can be exploited to address the  distributionally robust cost minimization problem with more specific techniques.
 In addition to Assumption \ref{ass:abstract_robust_finite}, we make the following assumptions in this section.

\begin{assumptions}\phantomsection \label{ass:abstract_robust_real}
	\begin{itemize}
		\item[(i)] The state space is the real line $E=\R$.
		\item[(ii)] The model data satisfies the Continuity and Compactness Assumptions \ref{ass:continuity_compactness} with the transition function $T_n$ being lower semicontinuous instead of continuous.
		\item[(iii)] The model data has the following monotonicity properties:
		\begin{itemize}
			\item[(iii a)] The set-valued mapping $\R \ni x \mapsto D_n(x)$ is decreasing.
			\item[(iii b)] The function $\R \ni x \mapsto T_n(x,a,z)$ is increasing for all $a\in D_n(x), z\in \Z$.
			\item[(iii c)] The function $\R^2  \ni (x,x')\mapsto c_n(x,a,x')$ is increasing for all $a\in D_n(x)$.
			\item[(iii d)] The function $\R \ni x\mapsto c_N(x)$ is increasing.
		\end{itemize}
%		\item[(iv)] Assumptions \ref{ass:abstract_robust_infinite} (ii) to (vi) hold.
	\end{itemize}
\end{assumptions}

With the real line as state space a simple separation condition is sufficient for Assumption \ref{ass:abstract_robust_finite} (ii).

\begin{corollary}\label{thm:abstract_robust_separation}
	Let there be upper semicontinuous functions $\vartheta_{n,1},\vartheta_{n,2}:D \to \R_+$ and measurable functions $\Theta_{n,1},\Theta_{n,2}: \Z \to \R_+$ which fulfil $\Theta_{n,1}(Z_{n+1}),\Theta_{n,2}(Z_{n+1}) \in L^p(\Omega_n,\A_n,\P_n)$ and
	\begin{align*}
	|c_n(x,a,T_n(x,a,z))| \leq \vartheta_{n,1}(x,a) + \Theta_{n,1}(z),  \qquad  b(T_n(x,a,z)) \leq \vartheta_{n,2}(x,a) + \Theta_{n,2}(z)
	\end{align*}
	for every $(x,a,z) \in D \times \Z$. Then Assumption \ref{ass:abstract_robust_finite} (ii) is satisfied.
\end{corollary}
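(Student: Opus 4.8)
The plan is to verify that the hypotheses of Corollary~\ref{thm:abstract_robust_separation} are simply a convenient, additively separated way of producing the local dominating functions required by Assumption~\ref{ass:abstract_robust_finite}~(ii). So fix $n \in \{0,\dots,N-1\}$ and a point $(\bar x, \bar a) \in D_n$. We must exhibit $\epsilon > 0$ and functions $\Theta_{n,1}^{\bar x,\bar a}, \Theta_{n,2}^{\bar x, \bar a} \colon \Z \to \R_+$ in $L^p(\Omega_{n+1},\A_{n+1},\P_{n+1})$ that dominate $|c_n(x,a,T_n(x,a,z))|$ and $b(T_n(x,a,z))$ respectively, uniformly over $(x,a) \in B_\epsilon(\bar x, \bar a) \cap D_n$.

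First I would choose $\epsilon = 1$ (any fixed positive radius works) and set $K_{\bar x, \bar a} := B_1(\bar x, \bar a) \cap D_n$. Since $D_n$ is closed (being a measurable subset of $E \times A$ containing a graph; more to the point, the $x$-sections $D_n(x)$ are compact and $x \mapsto D_n(x)$ is upper semicontinuous by Assumption~\ref{ass:continuity_compactness}~(i), which makes $D_n$ closed), $K_{\bar x, \bar a}$ is a closed and bounded subset of $\R \times A$. Using that $A$ is a Borel space one argues that bounded closed subsets of $\R \times A$ are compact under the relevant product metric, or more simply: $\vartheta_{n,1}$ and $\vartheta_{n,2}$ are upper semicontinuous on $D_n$ and hence, restricted to the compact set $K_{\bar x,\bar a}$, attain finite maxima. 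Denote $M_i := \sup_{(x,a) \in K_{\bar x, \bar a}} \vartheta_{n,i}(x,a) < \infty$ for $i=1,2$.

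Then I would define
\[
\Theta_{n,1}^{\bar x, \bar a}(z) := M_1 + \Theta_{n,1}(z), \qquad \Theta_{n,2}^{\bar x, \bar a}(z) := M_2 + \Theta_{n,2}(z), \qquad z \in \Z.
\]
Both are nonnegative measurable functions of $z$. They lie in $L^p(\Omega_{n+1},\A_{n+1},\P_{n+1})$ because constants are in $L^p$ (a probability space has finite measure) and $\Theta_{n,1}(Z_{n+1}), \Theta_{n,2}(Z_{n+1}) \in L^p(\Omega_{n+1},\A_{n+1},\P_{n+1})$ by hypothesis; here one uses the canonical identification $(\Omega_{n+1},\A_{n+1},\P_{n+1}) = (\Z,\Zf,\P^{Z_{n+1}})$ so that membership of $\Theta_{n,i}(Z_{n+1})$ in $L^p$ is the same as $\int |\Theta_{n,i}(z)|^p \, \P^{Z_{n+1}}(\dif z) < \infty$. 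For every $(x,a) \in K_{\bar x, \bar a}$ and every $z \in \Z$ the assumed bounds give
\[
|c_n(x,a,T_n(x,a,z))| \le \vartheta_{n,1}(x,a) + \Theta_{n,1}(z) \le M_1 + \Theta_{n,1}(z) = \Theta_{n,1}^{\bar x, \bar a}(z),
\]
and likewise $b(T_n(x,a,z)) \le \Theta_{n,2}^{\bar x, \bar a}(z)$, which is exactly Assumption~\ref{ass:abstract_robust_finite}~(ii).

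The only genuine point of care—hence the step I would flag as the main obstacle—is the compactness argument that turns the upper semicontinuous bounds $\vartheta_{n,i}$ into \emph{finite} constants $M_i$: one needs that $B_\epsilon(\bar x, \bar a) \cap D_n$ is compact, which rests on $D_n$ being closed (from upper semicontinuity and compactness of the sections in Assumption~\ref{ass:continuity_compactness}~(i)) together with the fact that bounded closed sets in $\R \times A$ are compact for a suitable product metric. If one prefers to avoid any subtlety about the topology of $A$, it is equally acceptable to simply \emph{assume} as part of the corollary that the $\vartheta_{n,i}$ are bounded on bounded subsets of $D_n$—in the intended applications (e.g.\ $\vartheta_{n,i}$ polynomial in $(x,a)$) this is automatic—and then the remainder of the argument is the routine estimate above. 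Everything else is the elementary observation that an $L^p$ function plus a constant is again an $L^p$ function, and monotone dominance of pointwise bounds.
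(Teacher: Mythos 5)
Your overall scheme (bound the upper semicontinuous terms $\vartheta_{n,i}$ by a finite constant on a neighbourhood of $(\bar x,\bar a)$, then add the $L^p$ functions $\Theta_{n,i}$) is the same as the paper's, and the final estimate and the "$L^p$ plus constant is $L^p$" step are fine. The gap is exactly at the point you flag: your justification that $K_{\bar x,\bar a}=B_\epsilon(\bar x,\bar a)\cap D_n$ is compact does not hold. The claim that "bounded closed subsets of $\R\times A$ are compact under the relevant product metric" is false for a general Borel action space $A$ (take $A$ an infinite-dimensional separable Banach space: its closed unit ball is closed and bounded but not compact), and being a Borel space gives no local compactness. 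Your closedness argument for $D_n$ (via compact sections and upper semicontinuity) is correct, but closed plus bounded is not enough, and the "or more simply" sentence just asserts the compactness that was to be proved. Your fallback of additionally assuming the $\vartheta_{n,i}$ bounded on bounded sets changes the statement, so it does not prove the corollary as formulated.

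The missing ingredient is that this corollary lives under Assumption \ref{ass:abstract_robust_real}, whose part (iii a) states that $x\mapsto D_n(x)$ is \emph{decreasing}. The paper uses precisely this: for any $\epsilon>0$,
\[
B_\epsilon(\bar x,\bar a)\cap D_n \subseteq S:=[\bar x-\epsilon,\bar x+\epsilon]\times D_n(\bar x-\epsilon),
\]
and $S$ is compact as the product of a compact interval with the compact set $D_n(\bar x-\epsilon)$ from Assumption \ref{ass:continuity_compactness} (i); upper semicontinuity of $\vartheta_{n,i}$ on the compact set $S$ then yields the finite maxima $M_i$, after which your construction of $\Theta_{n,i}^{\bar x,\bar a}$ goes through verbatim. (Alternatively, one could prove compactness of $B_\epsilon(\bar x,\bar a)\cap D_n$ directly by a sequential argument: bounded first coordinates in $\R$ give a convergent subsequence $x_{k_j}\to x^*$, upper semicontinuity and compact-valuedness of $D_n(\cdot)$ give an accumulation point $a^*\in D_n(x^*)$ of the actions, and closedness of the ball and of $D_n$ puts the limit back in the set. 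Either route works, but some such argument using the structural assumptions on $D_n$ is needed; topology of $A$ alone cannot supply it.)
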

\begin{proof}
	Let $(\bar x, \bar a) \in D_n$. We can choose $\epsilon >0$ arbitrarily. The set $S=[\bar x- \epsilon, \bar x+\epsilon] \times D_n(\bar x - \epsilon)$ is compact w.r.t.\ the product topology. Moreover, $B_\epsilon(\bar x,\bar a) \cap D_n \subseteq S$ since the set-valued mapping $D_n(\cdot)$ is decreasing. Due to upper semicontinuity there exist $(x_i, a_i) \in S$ such that $\vartheta_{n,i}( x_i, a_i) = \sup_{(x,a) \in S} \vartheta_{n,i}(x,a)$, $i=1,2$. Hence, one can define
	\[ \Theta_{n,i}^{\bar x, \bar a}(\cdot) := \vartheta_{n,i}(x_i, a_i) + \Theta_{n,i}(\cdot), \quad i=1,2 \]
	and Assumption \ref{ass:abstract_robust_finite} (ii) is satisfied.
\end{proof}

The question is, how replacing Assumption \ref{ass:continuity_compactness} (ii) by Assumption \ref{ass:abstract_robust_real}  affects the validity of all previous results. The only result that was proven using the continuity of the transition function $T_n$ in $(x,a)$ and not only its measurability is Theorem \ref{thm:abstract_robust_finite}. All other statements are unaffected.

\begin{proposition}\label{thm:abstract_robust_real}
	The assertions of Theorem \ref{thm:abstract_robust_finite} still hold when we replace Assumption \ref{ass:continuity_compactness} by  Assumption \ref{ass:abstract_robust_real}. Moreover, $J_n$ and $J$ are increasing. The set of potential value functions can therefore be replaced by 
	\[ \BB = \{ v \in \BB_b : \ v \text{ lower semicontinuous and increasing}  \}. \]
\end{proposition}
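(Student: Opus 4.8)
The plan is to exploit the observation made just before the statement: the continuity of $T_n$ in $(x,a)$ entered the proof of Theorem~\ref{thm:abstract_robust_finite} at exactly one point, namely to show that for fixed $z\in\Z$ the map \eqref{eq:thm3.10c}, $(x,a)\mapsto c_n(x,a,T_n(x,a,z))+v(T_n(x,a,z))$, is lower semicontinuous; everything else in Sections~\ref{sec:abstract_cost_model}--\ref{sec:abstract_robust_finite} uses only measurability of $T_n$. So I would re-prove this single lower semicontinuity statement under Assumption~\ref{ass:abstract_robust_real}, accepting that the candidate value functions now must also be \emph{increasing} --- which is precisely why $\BB$ has to be shrunk as in the statement --- and then run the backward induction of Theorem~\ref{thm:abstract_robust_finite} with ``$J_n\in\BB$'' now meaning ``$J_n$ lower semicontinuous \emph{and} increasing''.

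For the key step I would use two elementary facts: first, that an increasing, lower semicontinuous function $\R\to\R$ is left-continuous; second, that for $v\in\BB$ the map $w(x,a,x'):=c_n(x,a,x')+v(x')$ is lower semicontinuous on $D_n\times\R$ (as a sum of the jointly lower semicontinuous $c_n$ and the lower semicontinuous $v$ precomposed with a coordinate projection) and, for each fixed $(x,a)$, increasing in $x'$ (by Assumption~\ref{ass:abstract_robust_real}(iii\,c) and monotonicity of $v$). Given a convergent sequence $(x_k,a_k)\to(x^\ast,a^\ast)$ in $D_n$, set $t_k:=T_n(x_k,a_k,z)$ and $t^\ast:=T_n(x^\ast,a^\ast,z)$; lower semicontinuity of $T_n$ (Assumption~\ref{ass:abstract_robust_real}(ii)) yields $\liminf_k t_k\ge t^\ast$. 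For $\delta>0$ one then has $t_k\ge t^\ast-\delta$ for all large $k$, hence $w(x_k,a_k,t_k)\ge w(x_k,a_k,t^\ast-\delta)$ by monotonicity in the last variable, and therefore $\liminf_k w(x_k,a_k,t_k)\ge\liminf_k w(x_k,a_k,t^\ast-\delta)\ge w(x^\ast,a^\ast,t^\ast-\delta)$ by joint lower semicontinuity of $w$ (the last coordinate being held fixed). Letting $\delta\downarrow0$ and using that $x'\mapsto w(x^\ast,a^\ast,x')$ is increasing and lower semicontinuous, hence left-continuous, gives $\liminf_k w(x_k,a_k,t_k)\ge w(x^\ast,a^\ast,t^\ast)$, which is the lower semicontinuity of \eqref{eq:thm3.10c}. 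I expect this to be the main obstacle: with $T_n$ merely lower semicontinuous the limit cannot be moved inside $c_n$ and $v$ directly, and the point is that $T_n$ can only ``jump upward'', which is harmless for a $\liminf$ since $c_n(x,a,\cdot)$ and $v$ are increasing, so one can safely insert the downward shift $t^\ast-\delta$.

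Once \eqref{eq:thm3.10c} is lower semicontinuous, I would simply copy the rest of the proof of Theorem~\ref{thm:abstract_robust_finite}: the uniform $L^p$-bound of Lemma~\ref{thm:abstract_robust_Lpbound} still applies (for $E=\R$, Assumption~\ref{ass:abstract_robust_finite}(ii) may be verified through Corollary~\ref{thm:abstract_robust_separation}), Fatou's Lemma gives lower semicontinuity of $(x,a)\mapsto L_n v(x,a,\Q)$ for each $\Q\in\Qc_{n+1}$, the supremum over $\Q$ preserves it, Proposition~2.4.3 in \cite{BaeuerleRieder2011} delivers a minimizing Markov decision rule $d_n^\ast$ together with lower semicontinuity of $J_n=\T_n J_{n+1}$, and Lemma~\ref{thm:abstract_robust_bounds} gives $\|J_n\|_b<\infty$; part b) is untouched because it only invokes Theorem~\ref{thm:abstract_robust_nature_bellman}~b).

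It remains to show by backward induction that $J_n$ is increasing (this closes the induction with the smaller $\BB$ and is the additional assertion). The base case $J_N=c_N$ is increasing by Assumption~\ref{ass:abstract_robust_real}(iii\,d). For the step, assume $J_{n+1}$ increasing and fix $x\le y$; since $D_n(\cdot)$ is decreasing, $D_n(y)\subseteq D_n(x)$, and for every $a\in D_n(y)$, $\Q\in\Qc_{n+1}$ and $z\in\Z$, Assumptions~\ref{ass:abstract_robust_real}(iii\,b),(iii\,c) together with the induction hypothesis give $c_n(x,a,T_n(x,a,z))+J_{n+1}(T_n(x,a,z))\le c_n(y,a,T_n(y,a,z))+J_{n+1}(T_n(y,a,z))$, hence $L_n J_{n+1}(x,a,\Q)\le L_n J_{n+1}(y,a,\Q)$ and $\sup_{\Q}L_n J_{n+1}(x,a,\Q)\le\sup_{\Q}L_n J_{n+1}(y,a,\Q)$. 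Taking the infimum over $a$ --- over the larger set $D_n(x)$ on the left and the smaller set $D_n(y)$ on the right --- yields $J_n(x)\le J_n(y)$; the same estimate applies to $J$. Finally, the set of increasing lower semicontinuous functions is still closed in $\BB_b$ (a $\|\cdot\|_b$-limit is a pointwise limit, and pointwise limits of increasing functions are increasing), so replacing $\BB$ as stated is legitimate, and $(\BB,\|\cdot\|_b)$ remains a complete metric space.
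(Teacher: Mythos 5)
Your proposal is correct and follows essentially the same route as the paper: the paper likewise observes that continuity of $T_n$ entered the proof of Theorem \ref{thm:abstract_robust_finite} only through the lower semicontinuity of $(x,a)\mapsto c_n\big(x,a,T_n(x,a,z)\big)+v\big(T_n(x,a,z)\big)$, asserts this directly from the monotonicity assumptions, and obtains that $\T_n v$ is increasing by citing Theorem 2.4.14 in \cite{BaeuerleRieder2011}. Your $\delta$-shift/left-continuity argument and the explicit backward induction for monotonicity of $J_n$ merely spell out the details the paper delegates to that remark and citation.
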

\begin{proof}
%	The subset of increasing functions in $\{v \in \BB_b: \ v \text{ lower semicontinuous}\}$
%	is closed w.r.t.\ pointwise convergence, so especially w.r.t.\ $\|\cdot\|_b$. Hence, $(\BB, \|\cdot\|_b )$ is a complete metric space as a closed subset of complete metric space.
In the  proof of Theorem \ref{thm:abstract_robust_finite} the continuity of $T_n$ is used to show that $D_n \ni (x,a) \mapsto L_n v(x,a)$ is lower semicontinuous for every $v \in \BB$. Due to the monotonicity assumptions $$D_n \ni (x,a) \mapsto c_n\big(x,a,T_n(x,a,z_{n+1})\big)+  v\big(T_n(x,a,z_{n+1})\big)$$ is lower semicontinuous for every $z_{n+1}\in \Z$. Now the lower semicontinuity of $D_n \ni (x,a) \mapsto L_nv(x,a)$ and the existence of a minimizing decision rule follows as in the proof of Theorem \ref{thm:abstract_robust_finite}. The fact that $\T_n v$ is increasing for every $v \in \BB$ follows as in Theorem 2.4.14 in  \cite{BaeuerleRieder2011}.
\end{proof}

In the following Section \ref{sec:abstract_robust_minimax} we use a minimax approach as an alternative way to solve the Bellman equation of the distributionally robust cost minimization problem and to study its game theoretical properties. Subsequently in Section \ref{sec:special_sets}, we also consider special choices of the ambiguity set which are advantageous for solving the optimization problem.

\section{Minimax Approach and Game Theory}\label{sec:abstract_robust_minimax}
We assume $E = \R$ as in the last section.
Compared to a risk-neutral Markov Decision Model, the Bellman equation of the robust model (see Theorem \ref{thm:abstract_robust_finite})
%\begin{align}\label{eq:abstract_robust_minimax_bellman}
%\begin{aligned}
%J_N(x) &= 0,\\
%J_n(x) &= \inf_{a \in D(x)} \sup_{\Q \in \Qc} \E^\Q\Big[ c\big(x,a,T(x,a,Z)\big) + \beta  J_{n+1}\big(T(x,a,Z)\big) \Big], \qquad x \in \R,
%\end{aligned}
%\end{align}
has the additional complication that a supremum over possibly uncountably many expectations needs to be calculated. This can be a quite challenging task. Therefore, it may be advantageous to interchange the infimum and supremum. For instance, in applications it may be possible to infer structural properties of the optimal actions independently from the probability measure $\Q$ after the interchange. Based on the minimax theorem by \cite{Sion1958}, cf.\ Appendix Theorem \ref{thm:A:minimax}, this section presents a criterion under which the interchange of infimum and supremum is possible.

\begin{lemma}\label{thm:abstract_robust_convex}
Let Assumptions \ref{ass:abstract_robust_finite}, \ref{ass:abstract_robust_real} be satisfied.	Let $A$ be a subset of a vector space, $D_n$ be a convex set, $x\mapsto c_N(x)$, $(x,a) \mapsto T_n(x,a,z)$ be convex as well as $ (x,a,x') \mapsto c_n(x,a,x')$. Then the value functions $J_n$ and the limit value function $J$ are convex.   
\end{lemma}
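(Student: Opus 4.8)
The plan is to prove the claim by backward induction on $n$, using the Bellman equation $J_n=\T_n J_{n+1}$ from Theorem~\ref{thm:abstract_robust_finite} together with the fact, furnished by Proposition~\ref{thm:abstract_robust_real}, that each $J_{n+1}$ is lower semicontinuous and increasing; I would carry along the strengthened induction hypothesis that $J_{n+1}$ is convex in addition to the properties already known ($J_{n+1}\in\BB$ and increasing). The base case $n=N$ is immediate, since $J_N=c_N$ is convex by assumption.

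For the inductive step I would fix $z\in\Z$ and $\Q\in\Qc_{n+1}$ and examine the map
\[
 (x,a)\longmapsto c_n\big(x,a,T_n(x,a,z)\big)+J_{n+1}\big(T_n(x,a,z)\big)
\]
on the convex set $D_n$. The decisive point is a composition argument: since $(x,a)\mapsto T_n(x,a,z)$ is convex and $J_{n+1}$ is convex \emph{and increasing}, the composition $J_{n+1}\big(T_n(\cdot,\cdot,z)\big)$ is convex; likewise, since $(x,a,x')\mapsto c_n(x,a,x')$ is jointly convex and nondecreasing in $x'$ (Assumption~\ref{ass:abstract_robust_real}~(iii c)) while $(x,a)\mapsto\big(x,a,T_n(x,a,z)\big)$ has affine first two components and convex third component, the map $(x,a)\mapsto c_n\big(x,a,T_n(x,a,z)\big)$ is convex. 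Hence the displayed integrand is convex in $(x,a)$; integrating it against $\Q$ preserves convexity, so $(x,a)\mapsto L_nJ_{n+1}(x,a,\Q)$ is convex on $D_n$ for every $\Q$, and taking the pointwise supremum over $\Q\in\Qc_{n+1}$ keeps convexity. Thus $w_n(x,a):=\sup_{\Q\in\Qc_{n+1}}L_nJ_{n+1}(x,a,\Q)$ is convex on $D_n$.

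It then remains to check that partial minimization preserves convexity, namely that $J_n(x)=\inf_{a\in D_n(x)}w_n(x,a)$ is convex. Given $x_1,x_2$ and $\lambda\in[0,1]$, put $x_\lambda=\lambda x_1+(1-\lambda)x_2$ and choose minimizers $a_i\in D_n(x_i)$ of $w_n(x_i,\cdot)$ (which exist by Theorem~\ref{thm:abstract_robust_finite}; alternatively one argues with $\varepsilon$-optimal actions). Convexity of $D_n$ gives $\big(x_\lambda,\lambda a_1+(1-\lambda)a_2\big)\in D_n$, hence $\lambda a_1+(1-\lambda)a_2\in D_n(x_\lambda)$, so
\[
 J_n(x_\lambda)\le w_n\big(x_\lambda,\lambda a_1+(1-\lambda)a_2\big)\le\lambda w_n(x_1,a_1)+(1-\lambda)w_n(x_2,a_2)=\lambda J_n(x_1)+(1-\lambda)J_n(x_2),
\]
with all quantities finite by Lemma~\ref{thm:abstract_robust_bounds}. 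This closes the induction, and since $J$ arises as the pointwise limit of the $J_n$ (cf.\ \cite{glauner20}) and pointwise limits of convex functions are convex, $J$ is convex as well.

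I expect the composition step to be the only real obstacle, and it is precisely where the hypotheses specific to this lemma, together with the monotonicity of $T_n$, $c_n$ and $J_{n+1}$ from Section~\ref{sec:abstract_robust_real}, are essential: convexity of $T_n$ alone does not make $J_{n+1}\circ T_n$ or $c_n(\cdot,\cdot,T_n(\cdot,\cdot,z))$ convex — one genuinely needs the outer functions to be nondecreasing. The partial-minimization step is routine; the one subtlety there is that the convex combination of minimizers at $x_1$ and $x_2$ is feasible but generally not optimal at $x_\lambda$, which is exactly why one bounds $J_n(x_\lambda)$ from above by its value at that feasible point.
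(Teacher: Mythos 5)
Your proposal is correct and follows essentially the same route as the paper: backward induction, the composition argument using convexity of $T_n$, $c_n$, $c_N$ together with the monotonicity of $J_{n+1}$ and of $c_n$ in $x'$, then linearity of expectation and the supremum over $\Qc_{n+1}$. The only difference is that where you verify the partial-minimization step by hand with (near-)minimizers and convexity of $D_n$, the paper simply invokes Proposition 2.4.18 of \cite{BaeuerleRieder2011}, which encapsulates exactly that argument.
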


\begin{proof}
	The proof is by backward induction. $J_N=c_N$ is convex by assumption. Now assume that $J_{n+1}$ is convex. Recall that $J_{n+1}$ is increasing (Proposition \ref{thm:abstract_robust_real}). Hence, for every $z\in\Z$ the function
	\begin{align*}
	D_n \ni (x,a) \mapsto c_n\big(x,a,T_n(x,a,z)\big) +  J_{n+1}\big(T_n(x,a,z)\big)
	\end{align*}
	is convex as  a composition of an increasing convex with a convex function. By the linearity of expectation, 
	\begin{align} \label{eq:abstract_robust_convex}
	D_n \ni (x,a) \mapsto \E^\Q\Big[ c_n\big(x,a,T_n(x,a,Z_{n+1})\big) + J_{n+1}\big(T_n(x,a,Z_{n+1})\big) \Big] 
	\end{align}
	is convex for every $\Q \in \Qc_{n+1}$. As the pointwise supremum of a collection of convex functions is convex, we obtain convexity of $D_n \ni (x,a) \mapsto \sup_{\Q \in \Qc_{n+1}} LJ_{n+1}(x,a,\Q)$. Now, Proposition 2.4.18 in  \cite{BaeuerleRieder2011} yields the assertion. 
\end{proof}

The assumptions of Lemma \ref{thm:abstract_robust_convex} are subsequently referred to as \emph{convex model}.

\begin{theorem}\label{thm:abstract_robust_minimax_convex}
Let Assumptions \ref{ass:abstract_robust_finite}, \ref{ass:abstract_robust_real} be satisfied.		In a convex model we have for all $n=0,\dots,N-1$
	\begin{align*}
	J_n(x) &= \inf_{a \in D_n(x)} \sup_{\Q \in \Qc_{n+1}} L_n J_{n+1}(x,a,\Q) =  \sup_{\Q \in \Qc_{n+1}} \inf_{a \in D_n(x)} L_n J_{n+1}(x,a,\Q), \qquad x \in \R.
	\end{align*} 
\end{theorem}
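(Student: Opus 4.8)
The plan is to apply Sion's minimax theorem (Appendix Theorem \ref{thm:A:minimax}) to the function
\[
(a,\Q) \mapsto L_n J_{n+1}(x,a,\Q) = \int c_n\big(x,a,T_n(x,a,Z_{n+1})\big) + J_{n+1}\big(T_n(x,a,Z_{n+1})\big) \dif \Q,
\]
for each fixed $x \in \R$, with $a$ ranging over $D_n(x)$ and $\Q$ ranging over $\Qc_{n+1}$. Sion's theorem requires: one of the two sets to be compact; the function to be quasi-convex and lower semicontinuous in the minimizing variable $a$; and quasi-concave and upper semicontinuous in the maximizing variable $\Q$. I would verify these hypotheses one by one, then invoke the theorem, and finally note that $\T_n J_{n+1} = J_n$ by Theorem \ref{thm:abstract_robust_finite} (valid here by Proposition \ref{thm:abstract_robust_real}), so that the left-hand equality in the statement is just the definition of $\T_n$ and the right-hand equality is the content of the interchange.

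First I would treat the $a$-variable. The set $D_n(x)$ is compact (Assumption \ref{ass:continuity_compactness}(i)) and convex (hypothesis of the convex model). For fixed $\Q$, the map $a \mapsto L_n J_{n+1}(x,a,\Q)$ is convex: this is exactly the argument already given in the proof of Lemma \ref{thm:abstract_robust_convex}, namely $J_{n+1}$ is increasing (Proposition \ref{thm:abstract_robust_real}) and convex (Lemma \ref{thm:abstract_robust_convex}), $T_n(x,\cdot,z)$ is convex, $c_n(x,\cdot,\cdot)$ is convex, so the integrand is convex in $a$ for each $z$, and convexity is preserved under the expectation $\Eop^\Q$. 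Convex functions are quasi-convex. Lower semicontinuity of $a \mapsto L_n J_{n+1}(x,a,\Q)$ for each $\Q$ was also established inside the proof of Theorem \ref{thm:abstract_robust_finite} (via the lower semicontinuity of $(x,a) \mapsto c_n(x,a,T_n(x,a,z)) + J_{n+1}(T_n(x,a,z))$, the $L^p$-domination from Lemma \ref{thm:abstract_robust_Lpbound}, and Fatou's lemma); restricting that argument to the slice at fixed $x$ gives what is needed.

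Next I would treat the $\Q$-variable. Here $\Qc_{n+1}$ is convex only if we additionally know the ambiguity set is convex — I would either assume this as part of the convex model or note it is automatic for the specific ambiguity sets considered later; in any case $\Q \mapsto L_n J_{n+1}(x,a,\Q)$ is \emph{affine} in $\Q$ (the integral depends linearly on the density $\dif\Q/\dif\P_{n+1}$), hence both quasi-concave and quasi-convex, so no convexity of $\Qc_{n+1}$ beyond what Sion needs is required for the concavity side, and in fact convexity of $\Qc_{n+1}$ is the only genuine extra structural demand. For the semicontinuity in $\Q$: the integrand $c_n(x,a,T_n(x,a,Z_{n+1})) + J_{n+1}(T_n(x,a,Z_{n+1}))$ lies in $L^p(\Omega_{n+1},\A_{n+1},\P_{n+1})$ by Assumption \ref{ass:abstract_robust_finite} together with $J_{n+1} \in \BB_b$, so $\Q \mapsto L_n J_{n+1}(x,a,\Q)$ is exactly the pairing of a fixed $L^p$ function against the density in $L^q$, which is continuous — indeed this is precisely the continuity statement already used at the end of the proof of Theorem \ref{thm:abstract_robust_nature_bellman}. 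Continuity gives both upper and lower semicontinuity. Finally, since $\Qc_{n+1}$ is norm-bounded (Assumption \ref{ass:abstract_robust_finite}(iii)) and $\P_{n+1}$ separable, by Lemma \ref{thm:ambiguity_sep_metrizable} it is a separable metrizable space, and if it is additionally weak* closed it is compact — I would state the theorem under a closedness hypothesis on $\Qc_{n+1}$ (or pass to the weak* closure, noting the affine integrand attains the same supremum there) so that the compactness hypothesis of Sion's theorem is met on the $\Q$-side. With all hypotheses checked, Sion's theorem yields the interchange.

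The main obstacle is not analytic depth but bookkeeping: making sure the convexity, semicontinuity, and compactness hypotheses of Sion's theorem are each pinned to a specific earlier result, and in particular being careful about \emph{where} compactness is placed (on $D_n(x)$, which is always compact) versus where closedness/convexity of $\Qc_{n+1}$ must be assumed. The one point requiring a genuine (if mild) additional assumption beyond the convex model as stated is the convexity of the ambiguity set $\Qc_{n+1}$; everything else is harvested from the proofs of Theorems \ref{thm:abstract_robust_nature_bellman} and \ref{thm:abstract_robust_finite} and Lemma \ref{thm:abstract_robust_convex}.
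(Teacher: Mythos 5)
Your core argument is exactly the paper's: fix $x$, apply Sion's minimax theorem to $(a,\Q)\mapsto L_nJ_{n+1}(x,a,\Q)$ on $D_n(x)\times\Qc_{n+1}$, harvesting convexity in $a$ from Lemma \ref{thm:abstract_robust_convex}, linearity in $\Q$, and lower semicontinuity in $a$ from the proof of Theorem \ref{thm:abstract_robust_finite}. Where you diverge is the endgame. The paper invokes the version recorded as Theorem \ref{thm:A:minimax}~a), which puts the compactness requirement on the \emph{minimizing} set $D_n(x)$ and asks only for lower semicontinuity in $a$; no topological assumption on $\Qc_{n+1}$ beyond norm boundedness is used, which is precisely why the theorem is stated without weak* closedness. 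You instead propose to place compactness on the $\Q$-side by adding a closedness hypothesis (or passing to the weak* closure). That is unnecessary --- you have already verified every ingredient needed for Theorem \ref{thm:A:minimax}~a) --- and it would weaken the statement. Also, the closure fallback is not as harmless as you suggest: for fixed $a$ the supremum is indeed unchanged on the closure, but on the $\sup\inf$ side the map $\Q\mapsto\inf_{a\in D_n(x)}L_nJ_{n+1}(x,a,\Q)$ is a priori only upper semicontinuous (an infimum of continuous functions), so its supremum over the closure could exceed that over $\Qc_{n+1}$; closing this would need an extra argument, e.g.\ the joint lower semicontinuity of $L_n$ established later in the proof of Theorem \ref{thm:abstract_robust_nature_first}. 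Finally, your flag that convexity of $\Qc_{n+1}$ is tacitly needed is a fair catch rather than a defect of your proof: the paper's step ``linear in $\Q$, hence concave'' feeds Sion's concave-convex-like hypothesis, which requires the relevant convex combination (or a dominating element) to lie in the ambiguity set, and with a non-convex $\Qc_{n+1}$ (e.g.\ two Dirac-type disturbance laws in an otherwise convex model) the interchange can genuinely fail. So one should either read the convex model as including convexity of $\Qc_{n+1}$, or state the right-hand supremum over $\conv(\Qc_{n+1})$ in the spirit of Lemma \ref{thm:abstract_robust_ambiguity_convex_hull}; mere affineness of $\Q\mapsto L_nJ_{n+1}(x,a,\Q)$ does not by itself dispense with this.
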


\begin{proof}
	Let $x \in \R$ be fixed and define $f: D_n(x) \times \Qc_{n+1} \to \R$,
	\[ f(a,\Q) = L_n v(x,a,\Q) =  \E^\Q\Big[ c_n\big(x,a,T_n(x,a,Z_{n+1})\big) +  J_{n+1}\big(T_n(x,a,Z_{n+1})\big) \Big]. \]
	The function $f$ is convex in $a$ by \eqref{eq:abstract_robust_convex} and linear in $\Q$, i.e. especially concave. Furthermore, the set $D_n(x)$ is compact and it has been shown in the proof of Theorem \ref{thm:abstract_robust_finite} that $f$ is lower semicontinuous in $a$. Hence, the assertion follows from Theorem \ref{thm:A:minimax} a).
\end{proof}

\begin{remark}
	The interchange of infimum and supremum in Theorem \ref{thm:abstract_robust_minimax_convex} is based on Sion's Minimax Theorem \ref{thm:A:minimax}, which requires convexity of the function
	\begin{align}\label{eq:abstract_robust_minimax_randomize_counterexample_0}
		a \mapsto \int c_n\big(x,a,T_n(x,a,z)\big) +  J_{n+1}\big(T(x,a,z)\big) \, \Q(\dif z)
	\end{align}
	for every $(x,\Q) \in \R \times \Qc$. This can be guaranteed by a convex model (cf.\ Lemma \ref{thm:abstract_robust_convex}) which means that several components of the decision model need to have some convexity property. However, these assumptions are quite restrictive. Resorting to randomized actions is a standard approach to convexify (or more precisely linearize) the function \eqref{eq:abstract_robust_minimax_randomize_counterexample_0} without assumptions on the model components. Let $\Pc(D_n(x))$ be the set of all probability measures on $D_n(x)$. Then it follows from Sion's Theorem \ref{thm:A:minimax} that\begin{align}\label{eq:abstract_robust_minimax_randomize_counterexample_1}
&\inf_{\mu \in \Pc(D_n(x))} \sup_{\Q \in \Qc_{n+1}} 		\int L_n J_{n+1}(x,a,\Q) \mu(da)
=		  \sup_{\Q \in \Qc_{n+1}} \inf_{\mu \in \Pc(D_n(x))} \int L_n J_{n+1}(x,a,\Q) \mu(da) \\
		&=  \sup_{\Q \in \Qc_{n+1}} \inf_{a \in D_n(x)}  L_n J_{n+1}(x,a,\Q). \notag
	\end{align}
	The last equality holds since $a \mapsto c_n\big(x,a,T_n(x,a,z)\big) +  J_{n+1}\big(T_n(x,a,z)\big)$ is lower semicontinuous (cf.\ the proof of Theorem \ref{thm:abstract_robust_finite}) and $D_n(x)$ is compact. This appears to be a very elegant solution for the interchange problem, but unfortunately, the Bellman equation of the distributionally robust cost minimization problem \eqref{eq:opt_crit_abstract_robust_finite} under a randomized action of the controller is given by
	\begin{align}\label{eq:abstract_robust_minimax_randomize_counterexample_2}
		J_n(x) &= \inf_{\mu \in \Pc(D_n(x))} \int \sup_{\Q \in \Qc_{n+1}} \int c_n\big(x,a,T_n(x,a,z)\big) +  J_{n+1}\big(T_n(x,a,z)\big) \, \Q(\dif z) \, \mu(\dif a)\\
		&= \inf_{a \in D_n(x)} \sup_{\Q \in \Qc_{n+1}}  L_n J_{n+1}(x,a,\Q), \notag
	\end{align}
	cf.\ Theorems \ref{thm:abstract_robust_nature_bellman} and \ref{thm:abstract_robust_finite}, and \eqref{eq:abstract_robust_minimax_randomize_counterexample_1} does in general not equal \eqref{eq:abstract_robust_minimax_randomize_counterexample_2}. Recall that in our model nature is allowed to react to any realization of the controller's action. This was crucial to obtain a robust value iteration in Theorem \ref{thm:abstract_robust_nature_bellman}. In contrast to that, \eqref{eq:abstract_robust_minimax_randomize_counterexample_1} means that nature maximizes only knowing the distribution of the controller's action. However, in general \eqref{eq:abstract_robust_minimax_randomize_counterexample_1} $\neq$ \eqref{eq:abstract_robust_minimax_randomize_counterexample_2} as will be shown in the next example.
\end{remark}

\begin{example}	
	 In order to  see that \eqref{eq:abstract_robust_minimax_randomize_counterexample_1} $\neq$ \eqref{eq:abstract_robust_minimax_randomize_counterexample_2} and that infimum and supremum cannot be interchanged in general, consider the simple static counter example $N=1$, $E=\R$, $A=[0,1]$, $D=\R \times A$, $Z \sim \text{Bin}(1,p)$, $p\in [0,1] = \Qc$, $T(x,a,z)=-(a-z)^2$ and $c(x,a,x')=x'$. It is readily checked that Assumption \ref{ass:abstract_robust_real} is satisfied. Especially, one has constant bounding functions. In this example \eqref{eq:abstract_robust_minimax_randomize_counterexample_2} equals
	\begin{align*}
	&	\inf_{a \in [0,1]} \sup_{p \in [0,1]} \E^p\big[c(x,a,T(x,a,Z))\big]= \inf_{a \in [0,1]} \sup_{p \in [0,1]} -(1-p)a^2 -p(a-1)^2\\
		&= - \sup_{a \in [0,1]} \inf_{p \in [0,1]} (1-p)a^2 +p(a-1)^2= - \sup_{a \in [0,1]} \min\{a^2,(1-a)^2\} =-\frac14.
	\end{align*}
	If the controller chooses $\mu \sim \U(0,1)$, then \eqref{eq:abstract_robust_minimax_randomize_counterexample_1} must be less or equal than
	\begin{align*}
		\sup_{p \in [0,1]} \int_0^1 -(1-p)a^2 -p(a-1)^2 \, \dif a &= \sup_{p \in [0,1]} -\frac13 (1-p) - \frac13 p = -\frac13.
	\end{align*}
	Indeed we obtain here $	\sup_{p \in [0,1]}\inf_{a \in [0,1]}  \E^p\big[c(x,a,T(x,a,Z))\big]=-\frac12$.
	The approach to interchange infimum and supremum through a linearization with randomized actions works when nature only observes the distribution and not the realization of the controller's action. Also note that the situation here is quite different to classical two-person zero-sum games. The fact that infimum and supremum cannot be interchanged is a consequence of the asymmetric mover/follower situation. The example above with $P_{xa}^p=(1-p)a^2 +p(a-1)^2$ and $[0,1]$ discretized can also be used as  a counter-example within the setting of \cite{NilimGhaoui2005} and \cite{wiesemann2013robust}.
\end{example}

As mentioned before, the distributionally robust cost minimization model can be interpreted as a dynamic game with nature as the controller's opponent. Since nature chooses her action after the controller, observing his action but not being restricted by it, there is a (weak) \emph{second-mover advantage} by construction of the game. The fact that infimum and supremum in the Bellman equation can be interchanged means that the second-mover advantage vanishes in the special case of a convex model.

Let additionally the one-stage ambiguity sets $\Qc_n$ be weak* closed.  Now, the conditions of Theorem \ref{thm:A:minimax} b) are fulfilled, too. Then, the ambiguity set is weak* compact by Lemma \ref{thm:ambiguity_sep_metrizable} and by Lemma \ref{thm:abstract_robust_Lpbound} we have that $c_n\big(x,a,T_n(x,a,Z_{n+1})\big) +   J_{n+1}\big(T_n(x,a,Z_{n+1})\big)$ is in $L^p$. Thus, $\Q \mapsto L_n J_{n+1}(x,a,\Q)$ is weak* continuous for every $(x,a) \in D$.  This yields that $(a,\Q) \mapsto L_n J_{n+1}(x,a,\Q)$ satisfies the minimax-equality
\[ \min_{a \in D_n(x)} \max_{\Q \in \Qc_{n+1}} L_n J_{n+1}(x,a,\Q) =  \max_{\Q \in \Qc_{n+1}} \min_{a \in D_n(x)} L_n J_{n+1}(x,a,\Q) \]
and Lemma 2.105 in \cite{BarbuPrecupanu2012} implies  that for every $x \in \R$ the function has a saddle point $(a^*,\Q^*)$, i.e.
\[ L_nJ_{n+1}(x,a^*,\Q)  \leq  L_n J_{n+1}(x,a^*,\Q^*) \leq L_n J_{n+1}(x,a,\Q^*)  \]
for all $a \in D(x)$ and $\Q \in \Qc_{n+1}$. Such a saddle point constitutes a \emph{Nash equilibrium} in the subgame scenario $X_n=x$. We will show that Nash equilibria exist not only in one-stage subgames but also globally.

Before, let us introduce a modification of the game against nature where nature instead of the controller moves first. Given a policy of nature, the controller faces an arbitrary but fixed probability measure in each scenario $X_n=x$. Thus, the inner optimization problem is a risk-neutral MDP and it follows from standard theory  that it suffices for the controller to consider deterministic Markov policies.  Clearly, the controller's optimal policy will depend on the policy that nature has chosen before. It will turn out to be a pointwise dependence on the actions of nature. To clarify this and for comparability with the original game \eqref{eq:opt_crit_abstract_robust_finite}, where the controller moves first, we distinguish the following types of Markov strategies of the controller
\begin{align*}
\Pi(\R)  = & \Pi^M =  \ \left\{ \pi=(d_0,\dots,d_{N-1}) | \, d_n: \R \to A \text{ measurable}, \, d_n(x) \in D_n(x),\, x \in \R \right\}\\
\Pi(\R,\Qc) = & \ \left\{ \pi=(d_0,\dots, d_{N-1}) | \, d_n: \R \times \Qc_{n+1} \to A \text{ measurable}, \, d_n(x,\Q) \in D_n(x),\, x \in \R \right\}
\end{align*}
and of nature
\begin{align*}
\Gamma(\R) = & \ \left\{ \gamma=(\gamma_0,\dots,\gamma_{N-1}) | \ \gamma_n: \R \to \Qc_{n+1}  \text{ measurable} \right\}\\
\Gamma(\R,A) = \Gamma^M = & \ \left\{ \gamma=(\gamma_0,\dots,\gamma_{N-1}) | \ \gamma_n: \R \times A \to \Qc_{n+1}  \text{ measurable} \right\}.
\end{align*}
The value $J_{n\pi\gamma}$ of a pair of Markov policies $(\gamma,\pi) \in \Gamma(\R)\times \Pi(\R,\Qc)$ is defined as in \eqref{eq:abstract_robust_policy_value}. The bounds in Lemma \ref{thm:abstract_robust_bounds}   apply since the proofs do not use properties of the policies. The game under consideration is 
\begin{align} \label{eq:abstract_robust_nature_first_finite}
\wt J_n(x)= \sup_{\gamma \in \Gamma(\R)} \inf_{\pi \in \Pi(\R,\Qc)} J_{n\pi\gamma}(x), \qquad x \in \R, \ n=0,\ldots, N .
\end{align}
For clarity, we mark all quantities of the game where nature moves first which differ from the respective quantity of the original game with a tilde. The \emph{value of a policy of nature} $\gamma \in \Gamma(\R)$ is defined as
\begin{align*}
\wt J_{n\gamma}(x)= \inf_{\pi \in \Pi(\R,\Qc)} J_{n\pi\gamma}(x), \qquad x \in \R.
\end{align*}
The Bellman operator on $\BB$ can be introduced in the usual way:
\begin{align*}
\wt \T_n v(x) &= \sup_{\Q \in \Qc_{n+1}} \inf_{a \in D_n(x)} L_n v (x,a,\Q), \quad x \in \R. 
\end{align*}

\begin{theorem}\label{thm:abstract_robust_nature_first}
	Let Assumptions \ref{ass:abstract_robust_finite}, \ref{ass:abstract_robust_real} be satisfied, the ambiguity sets  $\Qc_{n+1}$ be weak* closed and the model be convex.
	\begin{itemize}
		\item[a)]  $\wt J_n\in \BB$ for $ n=0,\ldots, N$ and they satisfy the Bellman equation
		\begin{align*}
		\wt J_{N}(x) &= c_N(x),\\
		\wt J_{n}(x) &= \wt \T_n \wt J_{n+1}(x), \qquad x \in \R.
		\end{align*}
		There exist decision rules $\tilde \gamma_n:\R \to \Qc_{n+1}$ of nature and $\tilde d_n:\R\times \Qc_{n+1} \to A$ of the controller such that $\wt J_n (x) = \T_{\tilde d_n \tilde \gamma_n} \wt J_{n+1}(x)$ and all sequences of such decision rules induce an optimal policy pair $\tilde \gamma=(\tilde \gamma_0,\dots,\tilde \gamma_{n-1}) \in \Gamma(\R)$ and $\tilde \pi=(\tilde d_0,\dots, \tilde d_{n-1}) \in \Pi(\R,\Qc)$ i.e. $\widetilde J_n = J_{n \tilde \pi \tilde \gamma}$. 
		\item[b)]  We have that $J_n=\wt J_n = \wt J_{n\tilde \gamma}$.
	\end{itemize}
\end{theorem}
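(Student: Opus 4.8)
The plan is a backward induction on $n$ which, at each step, reduces the nature-first game to the first-mover problem already solved in Theorem~\ref{thm:abstract_robust_finite} by using Sion's interchange result of Theorem~\ref{thm:abstract_robust_minimax_convex} as the bridge. The base case $n=N$ is trivial: $\wt J_N=c_N=J_N\in\BB$, and no selection is needed. For the inductive step I would carry along the compound hypothesis that $\wt J_{n+1}=J_{n+1}\in\BB$ (hence, by Lemma~\ref{thm:abstract_robust_convex} and Proposition~\ref{thm:abstract_robust_real}, convex, increasing and lower semicontinuous) and that the tail $(\tilde\gamma_{n+1},\dots,\tilde\gamma_{N-1})$ of nature's policy built so far already satisfies $\wt J_{n+1\tilde\gamma}=\wt J_{n+1}$. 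Note first that $\wt J_{n\gamma}$ and $\wt J_n$ are finite by the bounds of Lemma~\ref{thm:abstract_robust_bounds}, whose proof does not use any property of the policies. The real target of the step is the Bellman identity $\wt J_n=\wt\T_n\wt J_{n+1}$; once it is available, applying Theorem~\ref{thm:abstract_robust_minimax_convex} to $\wt J_{n+1}=J_{n+1}$ gives $\wt\T_n\wt J_{n+1}=\T_nJ_{n+1}=J_n$, which immediately yields $J_n=\wt J_n\in\BB$, i.e.\ the first claim of b) and the regularity in a).

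For the easy inequality $\wt J_n\le\wt\T_n\wt J_{n+1}$ I would fix a Markov policy $\gamma\in\Gamma(\R)$ of nature and observe that the controller then faces an ordinary non-stationary finite-horizon risk-neutral MDP whose transition law at time $k$ is governed by $\gamma_k(x)\in\Qc_{k+1}$; by standard dynamic programming (as already invoked in the remark preceding the theorem, see e.g.\ \cite{BaeuerleRieder2011,HernandezLasserre1996}), deterministic Markov controller policies are sufficient and $\wt J_{n\gamma}(x)=\inf_{a\in D_n(x)}L_n\wt J_{n+1\gamma}(x,a,\gamma_n(x))$, whence $\wt J_{n\gamma}(x)\le\inf_{a}L_n\wt J_{n+1}(x,a,\gamma_n(x))\le\sup_{\Q\in\Qc_{n+1}}\inf_{a}L_n\wt J_{n+1}(x,a,\Q)=\wt\T_n\wt J_{n+1}(x)$, using $\wt J_{n+1\gamma}\le\wt J_{n+1}$ and the monotonicity of $L_n$ in its function argument; taking the supremum over $\gamma\in\Gamma(\R)$ gives the bound. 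The reverse inequality is where the work sits, and it requires a measurable maximizer. Since $(x,a,\Q)\mapsto L_nJ_{n+1}(x,a,\Q)$ is jointly measurable, lower semicontinuous in $(x,a)$ (as in the proof of Theorem~\ref{thm:abstract_robust_nature_bellman}) and weak*-continuous in $\Q$ (its integrand lying in $L^p$ by Lemma~\ref{thm:abstract_robust_Lpbound}), the function $g_n(x,\Q):=\inf_{a\in D_n(x)}L_nJ_{n+1}(x,a,\Q)$ is jointly measurable and, for each fixed $x$, upper semicontinuous in $\Q$ on the weak*-compact set $\Qc_{n+1}$ (Lemma~\ref{thm:ambiguity_sep_metrizable}); a measurable selection theorem of the type already used for Theorem~\ref{thm:abstract_robust_nature_bellman} (e.g.\ \cite{Rieder1978}) then produces a measurable $\tilde\gamma_n:\R\to\Qc_{n+1}$ with $g_n(x,\tilde\gamma_n(x))=\sup_\Q g_n(x,\Q)=\wt\T_n\wt J_{n+1}(x)$, and a second selection (Proposition~2.4.3 in \cite{BaeuerleRieder2011}, or again \cite{Rieder1978}, now with $(x,\Q)$ as the measurable parameter and $a\mapsto L_n\wt J_{n+1}(x,a,\Q)$ lower semicontinuous over the compact, upper semicontinuous correspondence $D_n(\cdot)$) gives a measurable minimizer $\tilde d_n:\R\times\Qc_{n+1}\to A$.

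With these selections in hand I would close the induction: using the hypothesis $\wt J_{n+1\tilde\gamma}=\wt J_{n+1}$ together with the controller's Bellman equation against the now fixed policy $\tilde\gamma$ gives
\[ \wt J_n(x)\ \ge\ \wt J_{n\tilde\gamma}(x)\ =\ \inf_{a\in D_n(x)}L_n\wt J_{n+1}(x,a,\tilde\gamma_n(x))\ =\ g_n(x,\tilde\gamma_n(x))\ =\ \wt\T_n\wt J_{n+1}(x), \]
which combined with the first inequality proves $\wt J_n=\wt\T_n\wt J_{n+1}$ and simultaneously $\wt J_{n\tilde\gamma}=\wt J_n$; the minimizer $\tilde d_n$ then realizes $\wt J_n=\T_{\tilde d_n\tilde\gamma_n}\wt J_{n+1}$, and, once more by the controller's standard DP equation against $\tilde\gamma$, every sequence of such minimizers constitutes an optimal controller response $\tilde\pi\in\Pi(\R,\Qc)$ with $\wt J_n=J_{n\tilde\pi\tilde\gamma}$. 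This gives a) in full, and b) follows at once: $J_n=\wt J_n$ was obtained via Theorem~\ref{thm:abstract_robust_minimax_convex}, and $\wt J_n=\wt J_{n\tilde\gamma}$ is the optimality of nature's policy. I expect the main obstacle to be precisely the measurable selection of nature's optimal \emph{Markov} rule $\tilde\gamma_n:\R\to\Qc_{n+1}$ — a genuinely new object compared to the first-mover game, where nature's rule was allowed to depend on the controller's realized action — and the care needed to propagate the identity $\wt J_{n\tilde\gamma}=\wt J_n$ through the backward recursion; the convexity and weak*-compactness hypotheses are used exactly to make Theorem~\ref{thm:abstract_robust_minimax_convex} applicable, so that the whole argument collapses onto the already-established Theorems~\ref{thm:abstract_robust_finite} and~\ref{thm:abstract_robust_minimax_convex}.
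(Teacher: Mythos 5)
Your overall route is viable but genuinely different from the paper's, and it has one under-justified step. On the difference: you prove the Bellman identity $\wt J_n=\wt\T_n\wt J_{n+1}$ by a two-sided sandwich, constructing nature's Markov rule $\tilde\gamma_n$ as a \emph{fresh} measurable maximizer of $\Q\mapsto\inf_{a\in D_n(x)}L_nJ_{n+1}(x,a,\Q)$ on the weak*-compact set $\Qc_{n+1}$, whereas the paper never performs a new selection for nature: it takes the optimal pair $(d_n^*,\gamma_n^*)$ of the controller-first game from Theorem \ref{thm:abstract_robust_finite} and composes, $\tilde\gamma_n:=\gamma_n^*(\cdot,d_n^*(\cdot))$, then uses Theorem \ref{thm:abstract_robust_minimax_convex} to see that $d_n^*(x)$ still minimizes against $\tilde\gamma_n(x)$ and closes the induction through the chain $J_n\ge \wt J_n \ge \wt J_{n\tilde\gamma}\ge \wt J_{n\tilde\pi\tilde\gamma}\ge \wt J_{n\tilde\gamma}$. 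Your definition of $\tilde\gamma_n$ as a maximizer of the sup--inf problem is, if anything, conceptually cleaner (nature-optimality holds by construction rather than via the saddle-point structure), but it buys you an extra selection problem the paper deliberately avoids. Your ``easy'' inequality also differs: the paper gets $J_n\ge\wt J_n$ from the trivial inf--sup $\ge$ sup--inf chain and recovers the Bellman equation only at the end from $J_n=\T_nJ_{n+1}=\wt\T_nJ_{n+1}$, while you invoke the exact dynamic-programming equation for $\wt J_{n\gamma}$ against an \emph{arbitrary} fixed $\gamma\in\Gamma(\R)$; its ``$\le$'' half needs measurable $\varepsilon$-optimal continuation policies, which you pass off as standard — defensible (the paper itself appeals to standard risk-neutral MDP theory in the preamble), but note the paper's proof is organized so that this is never needed for arbitrary $\gamma$.

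The genuine gap is in the measurable-selection steps. To obtain the controller's selector $\tilde d_n:\R\times\Qc_{n+1}\to A$ from Proposition 2.4.3 of \cite{BaeuerleRieder2011} with $(x,\Q)$ as parameter — and likewise to get joint measurability of $g_n(x,\Q)=\inf_{a\in D_n(x)}L_nJ_{n+1}(x,a,\Q)$ so that a Rieder-type theorem can produce $\tilde\gamma_n$ — you need lower semicontinuity of $(x,a,\Q)\mapsto L_nv(x,a,\Q)$ \emph{jointly} in all three variables. You only cite the two separate facts (lsc in $(x,a)$ for fixed $\Q$, weak*-continuity in $\Q$ for fixed $(x,a)$), and these do not combine into joint lower semicontinuity by themselves: Fatou's lemma cannot be applied along a sequence in which the measure varies. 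This is exactly where the paper's proof of this theorem spends its technical effort — it dominates the integrands by an $L^p$-majorant (Lemma \ref{thm:abstract_robust_Lpbound}), passes to the monotone envelopes $C_m=\inf_{k\ge m}(\cdots)$, and uses that the pairing $(X,\Q)\mapsto\E^\Q[X]$ is jointly continuous on $L^p\times\Qc_{n+1}$ for norm-bounded $\Qc_{n+1}$ (Corollary 6.40 in \cite{AliprantisBorder2006}) to conclude $\liminf_m L_nv(x_m,a_m,\Q_m)\ge L_nv(x^*,a^*,\Q^*)$. Without this argument neither of your two selections is justified as written; with it, your sandwich argument goes through and yields the same conclusions as the paper's proof.
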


\begin{proof}
 We have for $n$ and $x \in \R$:
		\begin{align}\label{eq:abstract_robust_nature_first_proof1}
		J_n(x) &= \inf_{\pi \in \Pi(\R)} \sup_{\gamma \in \Gamma(\R,A)} J_{n\pi\gamma}(x)  \geq \inf_{\pi \in \Pi(\R)} \sup_{\gamma \in \Gamma(\R)} J_{n\pi\gamma}(x) \geq  \sup_{\gamma \in \Gamma(\R)} \inf_{\pi \in \Pi(\R)}  J_{n\pi\gamma}(x)\notag\\
		&\geq \sup_{\gamma \in \Gamma(\R)} \inf_{\pi \in \Pi(\R,\Q)}  J_{n\pi\gamma}(x) = \wt J_n(x).
		\end{align}
Let $\pi^* = ( d_0^*, \dots,  d_{N-1}^*) \in \Pi(\R)$ and $\gamma^* = (\gamma_0^*,\dots, \gamma_{N-1}^*) \in \Gamma(\R,A)$ be optimal strategies for the original game \eqref{eq:opt_crit_abstract_robust_finite}. The existence is guaranteed by Theorem \ref{thm:abstract_robust_finite}.
		Then $\tilde \gamma = (\tilde \gamma_0,\dots, \tilde \gamma_{N-1})$ defined by $\tilde \gamma_n:= \gamma_n^*(\cdot, d_n^*(\cdot))$
		lies in $\Gamma(\R)$ since the decision rules are well defined as compositions of measurable maps.\\
We prove all statements simultaneously by induction. In particular we show that there exists a policy $\tilde \pi \in \Pi(\R,\Qc)$ such that
$$ J_n = \wt J_{n\tilde \gamma}= \wt J_{n\tilde \pi \tilde \gamma}.$$ 
We show next that $J_n \le \wt J_n$. From Theorem \ref{thm:abstract_robust_minimax_convex} and the induction hypothesis we obtain
\begin{eqnarray*}
J_n(x) &=& L_n J_{n+1}(x,d_n^*(x),\tilde \gamma_n(x)) = \inf_{a\in D_n(x)} L_n J_{n+1}(x,a,\tilde \gamma_n(x)) =  \inf_{a\in D_n(x)} L_n \wt J_{n+1}(x,a,\tilde \gamma_n(x)).
\end{eqnarray*}
Observe that again by the induction hypothesis
\begin{eqnarray}\nonumber
\wt J_{n\tilde \gamma}(x) &=& \inf_{\pi\in \Pi(\R,\Qc)} J_{n\pi \tilde \gamma}(x) =  \inf_{\pi\in \Pi(\R,\Qc)} L_n J_{n+1 \pi\tilde \gamma} (x, d_n(x,\tilde \gamma_n(x)),\tilde \gamma_n(x))\\ \nonumber
&\ge &  \inf_{a\in D_n(x)}  L_n \wt J_{n+1\tilde \gamma} (x,a,\tilde \gamma_n(x)) =L_n \wt J_{n+1\tilde \gamma} (x,\tilde d_n(x,\tilde \gamma_n (x)),\tilde \gamma_n(x))\\ \label{eq:thm5.4}
&=& L_n \wt J_{n+1\tilde \pi \tilde \gamma} (x,\tilde d_n(x,\tilde \gamma_n (x)),\tilde \gamma_n(x))= \wt J_{n\tilde \pi \tilde \gamma}\ge \wt J_{n\tilde \gamma}.
\end{eqnarray}
We will show below the existence of a minimizer $\tilde d_n$ in the second line is justified.
Thus we get equality in the expression above. Combining the previous two equations above we finally get
\begin{eqnarray*}
J_n(x) &=&  \inf_{a\in D_n(x)} L_n \wt J_{n+1}(x,a,\tilde \gamma_n(x)) =  \wt J_{n\tilde \gamma}\le \wt J_n(x).
\end{eqnarray*}
In total we have shown that $J_n = \wt J_n = \tilde J_{n\tilde \gamma}=\tilde J_{n\tilde\pi\tilde\gamma}$.  The joint Bellman equation for the controller and nature $ \wt J_{n} = \wt \T \wt J_{n+1}$ follows from Theorem \ref{thm:abstract_robust_minimax_convex}.

Finally, we verify the existence of a minimizer $\tilde d_n$. 
 Let $\{(x_n,a_n,\Q_n)\}_{n \in \N}$ be a convergent sequence in  $\R \times A \times \Qc$ with limit $(x^*,a^*,\Q^*)$.
By dominated convergence (Lemma \ref{thm:abstract_robust_Lpbound}) and the lower semicontinuity of  $D_n \ni (x,a) \mapsto c_n\big(x,a,T_n(x,a,z_{n+1})\big)+ v\big(T_n(x,a,z_{n+1})\big)$ for any $v\in \BB$ we obtain that
the increasing sequence of random variables $\{C_m\}_{m}$ given by
		$$C_m:= \inf_{k \geq m} c_n\big(x_k,a_k,T_n(x_k,a_k,Z_{n+1})\big) +  v\big(T_n(x_k,a_k,Z_{n+1})\big)$$ satisfies
\begin{align*}
		C_m \overset{L^p}{\longrightarrow} C^* \geq c_n\big(x^*,a^*,T_n(x^*,a^*,Z_{n+1})\big) +  v\big(T(x^*,a^*,Z_{n+1}))\big).
		\end{align*}
		Since $\Qc_{n+1}$ is norm bounded, Corollary 6.40 in \cite{AliprantisBorder2006} yields that the duality
		\[ (X,\Q) \mapsto \E^\Q[X] \]
		restricted to $L^p(\Omega_n,\A_n,\P_n)\times \Qc_{n+1}$ is jointly continuous, where $L^p(\Omega_n,\A_n,\P_n)$ is considered with the norm topology and $\Qc_{n+1}$ with the weak* topology.		
 Thus, we get
		\begin{align*}
&		\liminf_{m \to \infty}  L_nv(x_m,a_m,\Q_m) 		 \geq \liminf_{m \to \infty} \E^{\Q_m}\Big[ \inf_{k \geq m} c_n\big(x_k,a_k,T_n(x_k,a_k,Z_{n+1})\big) +   v\big(T_n(x_k,a_k,Z_{n+1})\big) \Big]\\
		&= \lim_{m \to \infty} \E^{\Q_m}[ C_m ]= \E^{\Q^*}[ C^* ]	\geq \E^{\Q^*}\Big[ c_n\big(x^*,a^*,T_n(x^*,a^*,Z_{n+1})\big) +  v\big(T_n(x^*,a^*,Z_{n+1}))\big) \Big]\\
		& = L_nv(x^*,a^*,\Q^*),
		\end{align*}
		which establishes the joint lower semicontinuity of $L_nv(\cdot)$.
		Note that $\wt J_{n+1 \tilde \gamma} \in \BB$ and $x \mapsto D_n(x)$ is  compact-valued and upper semicontinuous. Hence, it follows from Theorem 2.4.3 in \cite{BaeuerleRieder2011}  that there exists a  minimizing decision rule $\tilde d_n:\R\times\Qc_{n+1} \to A$ at \eqref{eq:thm5.4} and that 
		\begin{align*}
		\R \times \Qc_{n+1} \ni (x,\Q) \mapsto  \inf_{a \in D(x)}  L_n\wt J_{n+1 {\tilde \gamma}}(x,a,\Q) =  L_n\wt J_{n+1 {\tilde \gamma}}(x,\tilde d_n(x,\Q),\Q)
		\end{align*}
		is lower semicontinuous. This completes the proof.\qedhere

\end{proof}

\begin{remark}
Note that in contrast to classical zero-sum games we obtain the existence of deterministic policies.
\end{remark}

As a direct consequence we get the existence of Nash equilibria on policy level. 

\begin{corollary}\label{thm:abstract_robust_Nash}
	Consider a convex model with weak* closed ambiguity sets $\Qc_{n+1}$ and Assumptions \ref{ass:abstract_robust_finite}, \ref{ass:abstract_robust_real} fulfilled.  For $x \in \R$ we get
		\begin{align*}
		J_n(x) = \min_{\pi \in \Pi(\R) }\max_{\gamma \in \Gamma(\R,A)} J_{n\pi\gamma}(x) = \max_{\gamma \in \Gamma(\R)} \min_{\pi \in \Pi(\R,\Qc)} J_{n\pi\gamma}(x) = \tilde J_n(x).
		\end{align*}
		Consequently, we even obtain
		\[ J_n(x) = \min_{\pi \in \Pi(\R) }\max_{\gamma \in \Gamma(\R)} J_{n\pi\gamma}(x) = \max_{\gamma \in \Gamma(\R)} \min_{\pi \in \Pi(\R)} J_{n\pi\gamma}(x). \]
\end{corollary}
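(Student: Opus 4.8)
The plan is to deduce the corollary purely by bookkeeping, combining the three main results of this section (Theorems~\ref{thm:abstract_robust_finite}, \ref{thm:abstract_robust_minimax_convex} and~\ref{thm:abstract_robust_nature_first}) with the two trivial inclusions $\Pi(\R)\subseteq\Pi(\R,\Qc)$ and $\Gamma(\R)\subseteq\Gamma(\R,A)$. For the first displayed chain, recall that $\Pi(\R)=\Pi^M$ and $\Gamma(\R,A)=\Gamma^M$. By Corollary~\ref{thm:abstract_robust_nature_markov} the inner supremum $\sup_{\gamma\in\Gamma^M}J_{n\pi\gamma}(x)=J_{n\pi}(x)$ is attained for every fixed $\pi\in\Pi^M$, the $\Qc_{n+1}$ being weak* closed, and by Theorem~\ref{thm:abstract_robust_finite}~a) the outer infimum $\inf_{\pi\in\Pi^M}J_{n\pi}(x)=J_n(x)$ is attained at the controller's optimal Markov policy $\pi^*$; this gives the first equality with legitimate $\min$/$\max$. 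The third equality is Theorem~\ref{thm:abstract_robust_nature_first}: the outer supremum in the definition~\eqref{eq:abstract_robust_nature_first_finite} of $\widetilde J_n$ is attained at nature's optimal $\tilde\gamma\in\Gamma(\R)$ and the inner infimum at $\tilde\pi\in\Pi(\R,\Qc)$, while $J_n=\widetilde J_n$ is part~b) of that theorem (which rests on Theorem~\ref{thm:abstract_robust_minimax_convex}).

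For the ``consequently'' part I would run a squeezing argument. Shrinking nature's strategy set only lowers an inner supremum, so
\[
\inf_{\pi\in\Pi(\R)}\sup_{\gamma\in\Gamma(\R)}J_{n\pi\gamma}(x)\ \le\ \inf_{\pi\in\Pi(\R)}\sup_{\gamma\in\Gamma(\R,A)}J_{n\pi\gamma}(x)\ =\ J_n(x);
\]
enlarging the controller's strategy set only lowers an inner infimum, so
\[
\sup_{\gamma\in\Gamma(\R)}\inf_{\pi\in\Pi(\R)}J_{n\pi\gamma}(x)\ \ge\ \sup_{\gamma\in\Gamma(\R)}\inf_{\pi\in\Pi(\R,\Qc)}J_{n\pi\gamma}(x)\ =\ \widetilde J_n(x)\ =\ J_n(x);
\]
and weak duality gives $\sup_{\gamma}\inf_{\pi}J_{n\pi\gamma}(x)\le\inf_{\pi}\sup_{\gamma}J_{n\pi\gamma}(x)$ over $\Pi(\R)\times\Gamma(\R)$. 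Chaining these,
\[
J_n(x)\ \le\ \sup_{\gamma\in\Gamma(\R)}\inf_{\pi\in\Pi(\R)}J_{n\pi\gamma}(x)\ \le\ \inf_{\pi\in\Pi(\R)}\sup_{\gamma\in\Gamma(\R)}J_{n\pi\gamma}(x)\ \le\ J_n(x),
\]
so all three quantities equal $J_n(x)$.

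It then remains only to see that the outer and inner extrema in the last display are attained, so that $\min$/$\max$ are justified. The key observation is that a player facing a purely state-dependent opponent loses nothing by also restricting to state-dependent policies: for $\pi=(d_k)\in\Pi(\R)$ and any $\gamma\in\Gamma(\R,A)$ one has $J_{n\pi\gamma}=J_{n\pi\gamma'}$ with $\gamma'_k:=\gamma_k(\cdot,d_k(\cdot))\in\Gamma(\R)$, and symmetrically, for a fixed $\gamma\in\Gamma(\R)$ any $\pi\in\Pi(\R,\Qc)$ has the same value as $\pi'=(d_k(\cdot,\gamma_k(\cdot)))\in\Pi(\R)$. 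Hence, evaluating the first expression at $\pi^*$ gives $\sup_{\gamma\in\Gamma(\R)}J_{n\pi^*\gamma}(x)=\sup_{\gamma\in\Gamma(\R,A)}J_{n\pi^*\gamma}(x)=J_n(x)$, attained by Corollary~\ref{thm:abstract_robust_nature_markov}; and evaluating the second at $\tilde\gamma$ turns the inner problem into a risk-neutral MDP with disturbance law $\tilde\gamma_n(x)$ in state $x$, whose optimum $J_n(x)$ is attained within $\Pi(\R)$ by standard theory (cf.\ \cite{BaeuerleRieder2011}). I expect this last point — making precise that a Markov-against-Markov subgame does not shrink in value when both players are confined to state-dependent policies — to be the only step requiring genuine care; everything else is a citation or a one-line inequality.
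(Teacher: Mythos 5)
Your proposal is correct and takes essentially the same route as the paper: the equality of all five quantities is obtained by the same squeeze (set inclusions plus weak duality, collapsed by $J_n=\widetilde J_n$ from Theorem \ref{thm:abstract_robust_nature_first}), and attainment over $\Pi(\R)\times\Gamma(\R)$ is secured exactly as in the paper by composing the optimal policies, $\hat\gamma_n:=\gamma_n^*(\cdot,d_n^*(\cdot))$ or $\hat d_n:=\tilde d_n(\cdot,\tilde\gamma_n(\cdot))$. The only cosmetic point is that under Assumption \ref{ass:abstract_robust_real} the existence of $\pi^*$ and $\gamma^*$ should be quoted from Proposition \ref{thm:abstract_robust_real} (which extends Theorem \ref{thm:abstract_robust_finite} to the semicontinuous transition function) rather than from Theorem \ref{thm:abstract_robust_finite} itself.
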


\begin{proof}
 Theorem \ref{thm:abstract_robust_nature_first} implies equality in \eqref{eq:abstract_robust_nature_first_proof1}, i.e.
		\begin{align}\label{eq:abstract_robust_Nash_proof}
		J_n(x) &= \min_{\pi \in \Pi(\R)} \max_{\gamma \in \Gamma(\R,A)} J_{n\pi\gamma}(x) = \inf_{\pi \in \Pi(\R)} \sup_{\gamma \in \Gamma(\R)} J_{n\pi\gamma}(x) \notag\\
		&=  \sup_{\gamma \in \Gamma(\R)} \inf_{\pi \in \Pi(\R)}  J_{n\pi\gamma}(x)= \max_{\gamma \in \Gamma(\R)} \min_{\pi \in \Pi(\R,\Q)}  J_{n\pi\gamma}(x) = \wt J_n(x). 
		\end{align}
		It remains to find optimal policies for the second and fourth equation of \eqref{eq:abstract_robust_Nash_proof}. Let
		\begin{align*}
		\pi^* &= (d_0^*, \dots, d_{N-1}^*) \in \Pi(\R), &  \gamma^* &= (\gamma_0^*,\dots, \gamma_{N-1}^*) \in \Gamma(\R,A)\\
		\text{and} \qquad \qquad \tilde \gamma &= (\tilde \gamma_0,\dots, \tilde \gamma_{N-1}) \in \Gamma(\R) & \tilde \pi &= (\tilde d_0, \dots, \tilde d_{N-1}) \in \Pi(\R,\Qc)
		\end{align*}
		be optimal strategies for the first and fifth equation of \eqref{eq:abstract_robust_Nash_proof}, respectively, which exist by Proposition \ref{thm:abstract_robust_real} and Theorem \ref{thm:abstract_robust_nature_first}. Then
		\[ \inf_{\pi \in \Pi(\R)} \sup_{\gamma \in \Gamma(\R)} J_{n\pi\gamma} = \sup_{\gamma \in \Gamma(\R)} \inf_{\pi \in \Pi(\R)}  J_{n\pi\gamma}  \]
		is attained by the admissible strategy pair $(\hat \pi,\hat \gamma) \in \Pi(\R) \times \Gamma(\R)$ which can be defined by $\hat d_n := d_n^*$ and $\hat \gamma_n:= \gamma_n^*(\cdot,d_n^*(\cdot))$ or alternatively by $\hat d_n := \tilde d_n(\cdot,\tilde \gamma_n(\cdot))$ and $\hat \gamma_n:= \tilde \gamma_n$ for $ n=0,\dots, N-1$.\qedhere
\end{proof}

\section{Special Ambiguity Sets}\label{sec:special_sets}
In this section, we consider some special choices for the ambiguity set $\Qc_{n+1}$ which simplify solving the Markov Decision Problem \eqref{eq:opt_crit_abstract_robust_finite}  or allow for structural statements about the solution. We assume a real-valued state space here.\medskip

\textbf{Convex hull.} It does not change the optimal value of the optimization problems if a given ambiguity set $\Qc_{n+1}$ is replaced by its convex hull $\conv(\Qc_{n+1})$ or its closed convex hull $\cconv(\Qc_{n+1})$, where the closure is with respect to the weak* topology. Clearly, to demonstrate this, it suffices to compare the corresponding Bellman equations.
\begin{lemma}\label{thm:abstract_robust_ambiguity_convex_hull}
	Let $\Qc_{n+1}$ be any norm bounded ambiguity set. Then it holds for all $v \in \BB$ and $x \in \R$
	\begin{align*}
	\inf_{a \in D_n(x)} \sup_{\Q \in \Qc_{n+1}} L_n v(x,a,\Q)
	= \inf_{a \in D(x)} \sup_{\Q \in \conv(\Qc_{n+1})}L_nv(x,a,\Q)
	= \inf_{a \in D(x)} \sup_{\Q \in \cconv(\Qc_{n+1})} L_nv(x,a,\Q).
	\end{align*}
\end{lemma}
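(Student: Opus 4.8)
The plan is to prove the two equalities by a sandwich argument, reducing everything to the elementary fact that the supremum of a linear functional over a set equals its supremum over the convex hull, and (for the closed convex hull) over the weak* closure. First I would fix $v \in \BB$ and $x \in \R$, and recall that by Lemma~\ref{thm:abstract_robust_Lpbound} (invoked in the proof of Theorem~\ref{thm:abstract_robust_finite}) the integrand $c_n(x,a,T_n(x,a,Z_{n+1}))+v(T_n(x,a,Z_{n+1}))$ lies in $L^p(\Omega_{n+1},\A_{n+1},\P_{n+1})$ for each $(x,a) \in D_n$. Hence the map $\Q \mapsto L_n v(x,a,\Q)$ is, in the density picture \eqref{eq:densities}, the restriction of the continuous linear functional $\langle \cdot, g\rangle$ on $L^q$ (with $g$ the aforementioned integrand, viewed in $L^p$) to $\Qc_{n+1}^d$. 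This identification is the conceptual heart of the argument; once it is in place the rest is routine.

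Next I would establish the pointwise inner identity: for each fixed $a \in D_n(x)$,
\begin{align*}
\sup_{\Q \in \Qc_{n+1}} L_n v(x,a,\Q) = \sup_{\Q \in \conv(\Qc_{n+1})} L_n v(x,a,\Q) = \sup_{\Q \in \cconv(\Qc_{n+1})} L_n v(x,a,\Q).
\end{align*}
The first equality holds because $\Q \mapsto L_n v(x,a,\Q)$ is affine (indeed linear in the density), so its supremum over a set and over the convex hull of that set coincide; the inclusion $\Qc_{n+1} \subseteq \conv(\Qc_{n+1})$ gives ``$\le$'', and any point of $\conv(\Qc_{n+1})$ is a finite convex combination of points of $\Qc_{n+1}$, giving ``$\ge$''. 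The second equality holds because $\Q \mapsto L_n v(x,a,\Q)$ is weak* continuous (by the $L^p$-$L^q$ duality and the integrability just noted), so it attains the same supremum over a set and over its weak* closure: ``$\le$'' is clear, and for ``$\ge$'' approximate any $\Q \in \cconv(\Qc_{n+1})$ by a net in $\conv(\Qc_{n+1})$ and use continuity together with the first equality.

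Finally I would take the infimum over $a \in D_n(x)$ on both sides of the pointwise identity. Since the three inner suprema agree as functions of $a$, their infima over $D_n(x)$ agree, which is exactly the claimed chain of equalities. The main obstacle — such as it is — is purely bookkeeping: one must make sure the weak* continuity of $\Q \mapsto L_n v(x,a,\Q)$ is legitimately available, i.e. that $\Qc_{n+1}$ is being topologized as a subset of $(L^q,\sigma(L^q,L^p))$ and that the test function $g$ genuinely lies in $L^p$; both are guaranteed by Assumption~\ref{ass:abstract_robust_finite} and the bound in Lemma~\ref{thm:abstract_robust_Lpbound}, so no new argument is needed. One should also note that $\conv(\Qc_{n+1})$ and $\cconv(\Qc_{n+1})$ are again norm bounded (a convex combination, or weak* limit, of densities with $L^q$-norm bounded by $K^{1/q}$ still satisfies that bound, using that the norm-$K^{1/q}$ ball is weak* closed), so the Bellman operators on the right-hand side are well defined in the sense of the paper.
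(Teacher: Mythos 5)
Your proposal is correct and follows essentially the same route as the paper: linearity of $\Q \mapsto L_n v(x,a,\Q)$ handles the convex hull, weak* continuity (via the $L^p$ integrand from Lemma \ref{thm:abstract_robust_Lpbound}) handles the weak* closure, and the infimum over $a \in D_n(x)$ is then taken on both sides. The only cosmetic difference is that the paper invokes metrizability of $\cconv(\Qc_{n+1})$ to work with sequences while you use nets, which changes nothing of substance.
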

\begin{proof}
	Fix $(x,a) \in D_n$. The function $\Q \mapsto L_n v(x,a,\Q)$ is linear. Thus, for a generic element $\Q = \sum_{i=1}^n \lambda_i \Q_i \in \conv(\Qc_{n+1})$ we have
	\[ L_nv\left(x,a, \sum_{i=1}^m \lambda_i \Q_i \right) = \sum_{i=1}^m \lambda_i L_nv(x,a,\Q_i) \leq \max_{i=1,\dots,m} L_n v(x,a,\Q_i), \]
	i.e. there can be no improvement of the supremum on the convex hull.  We also have  that $\cconv(\Qc_{n+1})$ is metrizable and therefore coincides with the limit points of sequences in $\conv(\Qc_{n+1})$. Since $\Q \mapsto L_nv(x,a,\Q)$ is weak* continuous (cf.\ proof of Theorem \ref{thm:abstract_robust_nature_bellman}), the supremum cannot be improved on the closure either. 
\end{proof}

\textbf{Integral stochastic orders on $\Qc_{n+1}$.} Following an idea of \cite{Mueller1997}, one can define integral stochastic orders on the ambiguity $\Qc_{n+1}$ set by
\begin{align*}
\Q_1 \leq_{\BB,x,a} \Q_2 \  : \Longleftrightarrow \ & L_n v(x,a,\Q_1) \leq  L_n v(x,a,\Q_2) \quad \text{for all } v \in \BB
\intertext{ where $(x,a) \in D_n$ is fixed and}
\Q_1  \leq_{\BB} \Q_2 \  : \Longleftrightarrow \ & \Q_1 \leq_{\BB,x,a} \Q_2 \quad \text{for all } (x,a) \in D_n.
\end{align*}
If there exists a maximal element with respect to one of these stochastic orders, this probability measure is an optimal action for nature (in the respective scenario). The proof of the next lemma follows directly from the Bellman equation and the definition of the orderings.

\begin{lemma} \phantomsection \label{thm: abstract_robust_integral_stoch_order}
	\begin{enumerate}
		\item If there exists a maximal element $\Q_{n,x,a} \in \Qc_{n+1}$  w.r.t.\ $\leq_{\BB,x,a}$ for every $(x,a) \in D_n$, then $\gamma=(\gamma_0,\dots,\gamma_{N-1})$ with $\gamma_n(x,a):= \Q_{n,x,a}$ defines an optimal policy.
		\item If there exists a maximal element $\Q^*_n \in \Qc_n$  w.r.t.\ $\leq_{\BB}$, then $\gamma=(\gamma_0,\dots,\gamma_{N-1})$ with $\gamma_n \equiv \Q^*_n$ defines a constant optimal action of nature. 
		That is, \eqref{eq:opt_crit_abstract_robust_finite}  can be reformulated to risk-neutral MDPs under the probability measure $\Q^*_n$. 
	\end{enumerate}
\end{lemma}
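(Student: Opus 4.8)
The plan is to obtain both statements directly from the Bellman equation $J_n=\T_n J_{n+1}$ of Theorem \ref{thm:abstract_robust_finite} together with the Markov value iteration $J_{n\pi}=\T_{nd_n}J_{n+1\pi}$ of Corollary \ref{thm:abstract_robust_nature_markov}, exploiting that $J_{n+1}\in\BB$ so that the defining inequalities of $\leq_{\BB,x,a}$ may be applied to the value functions themselves. For part (a), fix a stage $n$ and a pair $(x,a)\in D_n$ and apply the definition of $\leq_{\BB,x,a}$ to $v=J_{n+1}$: maximality of $\Q_{n,x,a}$ gives $L_n J_{n+1}(x,a,\Q)\le L_n J_{n+1}(x,a,\Q_{n,x,a})$ for every $\Q\in\Qc_{n+1}$, so the inner supremum in the Bellman equation is attained at $\Q_{n,x,a}$, i.e.
\[ \sup_{\Q\in\Qc_{n+1}} L_n J_{n+1}(x,a,\Q)=L_n J_{n+1}(x,a,\Q_{n,x,a}). \]
Putting $\gamma_n(x,a):=\Q_{n,x,a}$ and letting $\pi^*=(d_0^*,\dots,d_{N-1}^*)$ be the controller's optimal Markov policy from Theorem \ref{thm:abstract_robust_finite}, I would prove $J_{n\pi^*\gamma}=J_n$ by backward induction: at $n=N$ both sides equal $c_N$, and assuming $J_{n+1\pi^*\gamma}=J_{n+1}$ one has
\begin{align*}
J_{n\pi^*\gamma}(x)&=\T_{nd_n^*\gamma_n}J_{n+1\pi^*\gamma}(x)=L_n J_{n+1}\big(x,d_n^*(x),\Q_{n,x,d_n^*(x)}\big)\\
&=\T_{nd_n^*}J_{n+1}(x)=\T_n J_{n+1}(x)=J_n(x),
\end{align*}
where the first equality is the value iteration, the second uses the definition of $\gamma_n$, the third is the displayed identity, and the fourth holds since $d_n^*$ minimizes in the Bellman equation. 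Hence $\gamma=(\gamma_0,\dots,\gamma_{N-1})$ is an optimal response of nature to $\pi^*$, i.e. an optimal policy.

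For part (b), a maximal element $\Q^*_n\in\Qc_n$ for $\leq_{\BB}$ is maximal for $\leq_{\BB,x,a}$ at every $(x,a)\in D_n$, so the constant rule $\gamma_n\equiv\Q^*_n$ is of the form treated in part (a) and therefore optimal. Substituting it into the Bellman equation yields
\[ J_n(x)=\inf_{a\in D_n(x)}\E^{\Q^*_n}\big[\,c_n(x,a,T_n(x,a,Z_{n+1}))+J_{n+1}(T_n(x,a,Z_{n+1}))\,\big], \]
which is precisely the Bellman recursion of the risk-neutral finite-horizon MDP in which the disturbance $Z_{n+1}$ has law $\Q^*_n$ instead of $\P_{n+1}$; thus \eqref{eq:opt_crit_abstract_robust_finite} reduces to that risk-neutral problem.

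The only point needing care — so that ``follows directly'' is not taken too literally — is measurability of the selection $(x,a)\mapsto\Q_{n,x,a}$, which is required for $\gamma_n$ to be an admissible Markov decision rule of nature; for part (b) this is automatic since a constant map is measurable. For part (a) I would note that, the value of the inner supremum being pinned down by maximality, a measurable maximizer exists under the standing assumptions by the selection arguments already used in Theorems \ref{thm:abstract_robust_nature_bellman} and \ref{thm:abstract_robust_finite}, and any such measurable maximizer then serves as $\gamma_n$; alternatively, for a fully self-contained statement one simply adds measurability of $(x,a)\mapsto\Q_{n,x,a}$ as a hypothesis. Everything else is the backward induction above and the monotonicity of the operators $\T_{n d\gamma}$, $\T_{n d}$, $\T_n$ in $v$.
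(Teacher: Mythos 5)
Your proposal is correct and follows essentially the same route the paper intends: the paper gives no written argument beyond remarking that the lemma ``follows directly from the Bellman equation and the definition of the orderings,'' and your proof simply spells this out by applying the maximality to $v=J_{n+1}\in\BB$ and running the backward induction through the value iteration. Your added caveat about measurability of $(x,a)\mapsto \Q_{n,x,a}$ in part (a) is a sensible point the paper leaves implicit (for part (b) it is indeed vacuous), and handling it via the selection arguments or as an extra hypothesis is appropriate.
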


%\begin{proof}
%	\begin{enumerate}
%		\item Fix $(x,a) \in D_n$ and let $n \in \{0,\dots, N-1\}$ and $\Q_n \in \Qc_n$ be arbitrary. Since $J_{n+1} \in \BB$ by Theorem \ref{thm:abstract_robust_finite}, $\Q_n \leq_{\BB,x,a} \Q_{n,x,a}$ implies 
%		\begin{align*}
%		L_n J_{n+1}(x,a,\Q_n) &\le L_nJ_{n+1}(x,a,\Q_{n,x,a}).
%		\end{align*}
%		Hence, $Q_{n,x,a}$ is a maximizing action of nature and the selection $\gamma_n(x,a)= \Q_{n,x,a}$ is measurable by Theorem \ref{thm:abstract_robust_nature_bellman}. 
%		\item This follows directly from a). \qedhere
%	\end{enumerate}
%\end{proof}

In fact, Lemma \ref{thm: abstract_robust_integral_stoch_order} holds for any state space. But it is only a reformulation of what is an optimal action for nature. However, under Assumption \ref{ass:abstract_robust_real} it has practical relevance when a simpler sufficient condition for the integral stochastic order $\leq_{\BB}$ is fulfilled. We give three exemplary criteria:
\begin{enumerate}
	\item[1.] Let $\mathcal{Z}$ be a partially ordered space, e.g.\ $\mathcal{Z}=\R^m$, and assume that the transition functions are  increasing in $z$. Then the functions 
	\begin{align}\label{eq:abstract_robust_convex_composition}
	\mathcal{Z} \ni z \mapsto  c_n(x,a,T_n(x,a,z)) +  v(T_n(x,a,z)), \qquad v \in \BB,\ (x,a) \in D_n 
	\end{align}
	are increasing. Thus, $\Q_1 \leq_{\BB} \Q_2$ is implied by the usual stochastic order of the disturbance distributions $\Q_1^{Z_{n+1}} \leq_{st} \Q_2^{Z_{n+1}}$ and a maximal element of $\Qc_{n+1}$ w.r.t.\ $\leq_{st}$ allows the same conclusion as in Lemma \ref{thm: abstract_robust_integral_stoch_order} b). 
	\item[2.] Let  $\mathcal{Z}$ be a real vector space, assume a convex model (cf.\ Lemma \ref{thm:abstract_robust_convex}) and let the transition functions $T_n$ additionally be convex in $z$. Now, Lemma \ref{thm:abstract_robust_convex} yields that the functions \eqref{eq:abstract_robust_convex_composition} are convex as compositions of increasing convex and convex mappings. Consequently, $\leq_{\BB}$ is implied by the convex order $\leq_{cx}$ of the disturbance distributions $\Q^{Z_{n+1}}$.
%	\item[3.] Combining the requirements of 1. and 2., the sufficient condition can be weakened to the increasing convex order $\leq_{icx}$. 
\end{enumerate}

\textbf{Convex order on the set of densities.} Since the probability measures in $\Qc_{n+1}$ are absolutely continuous with respect to the reference probability measure $\P_{n+1}$, we can alternatively consider the set of densities $\Qc_{n+1}^d$ (see \eqref{eq:densities}).
In general, one has to take care both of the marginal distribution of the density and the dependence structure with the random cost when searching for a maximizing density of the Bellman equation
\[ \inf_{a \in D_n(x)} \sup_{Y \in \Qc_{n+1}^d} \E\Big[ \Big(c_n\big(x,a,T_n(x,a,Z)\big) +   J_{n+1}\big(T_n(x,a,Z)\big)\Big) Y \Big]. \]
However, if $\Qc_{n+1}^d$ is sufficiently rich, the maximization reduces to comparing marginal distributions.

\begin{definition}
	The set of densities $\Qc_{n}^d$ is called \emph{law invariant}, if for $Y_1 \in \Qc_{n}^d$ every $Y_2 \in L^q(\Omega_n,\A_n,\P_n)$ with $Y_2 \sim Y_1$ is in $\Qc_{n}^d$, too.
\end{definition}

\begin{lemma}\label{thm:abstract_robust_ambiguity_comonotonic}
	Let Assumptions \ref{ass:abstract_robust_finite}, \ref{ass:abstract_robust_real} be satisfied. If $\Qc_{n+1}^d$ is law invariant, 
	\begin{equation}\label{eq:expectation}
	 \sup_{Y \in \Qc_{n+1}^d} \E\Big[ \Big(c_n\big(x,a,T_n(x,a,Z_{n+1})\big) +   v\big(T_n(x,a,Z_{n+1})\big)\Big) Y \Big], \qquad (x,a) \in D_n, \ v \in \BB, \end{equation}	 
	is not changed by restricting the maximization to densities which are comonotonic to the random variable $T_n(x,a,Z_{n+1})$.
\end{lemma}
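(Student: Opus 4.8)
The plan is to deduce the statement from the Hardy--Littlewood rearrangement inequality, exploiting that under Assumption \ref{ass:abstract_robust_real} the integrand appearing in \eqref{eq:expectation} is an increasing function of $T_n(x,a,Z_{n+1})$, and that the atomless space $(\Omega_{n+1},\A_{n+1},\P_{n+1})$ supports the generalized distributional transform.

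First I would fix $(x,a)\in D_n$ and $v\in\BB$ and introduce $W:=T_n(x,a,Z_{n+1})$ together with the function $g(x'):=c_n(x,a,x')+v(x')$, so that the bracketed random variable in \eqref{eq:expectation} is $X:=g(W)$. By Assumption \ref{ass:abstract_robust_real} (iii c) and the fact that every $v\in\BB$ is increasing under Assumptions \ref{ass:abstract_robust_finite}, \ref{ass:abstract_robust_real} (Proposition \ref{thm:abstract_robust_real}), the map $g$ is increasing; hence $X$ is an increasing function of $W$ and in particular comonotonic to it. I would also record that $X\in L^p(\Omega_{n+1},\A_{n+1},\P_{n+1})$ by Lemma \ref{thm:abstract_robust_Lpbound}, while $Y\in L^q(\Omega_{n+1},\A_{n+1},\P_{n+1})$ for every $Y\in\Qc_{n+1}^d$, so that all products $XY$ are integrable by H\"older's inequality and every expectation below is finite.

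Next, since $(\Omega_{n+1},\A_{n+1},\P_{n+1})$ is atomless, I would invoke the generalized distributional transform to obtain a random variable $U$, uniform on $(0,1)$, with $W=F_W^{-1}(U)$ a.s. For an arbitrary $Y\in\Qc_{n+1}^d$ I would then set $\tilde Y:=F_Y^{-1}(U)$; this has the same law as $Y$, hence $\tilde Y\in\Qc_{n+1}^d$ by law invariance, and being an increasing function of the same $U$ as $W=F_W^{-1}(U)$ it is comonotonic to $T_n(x,a,Z_{n+1})$. The lemma then reduces to the single inequality $\E[XY]\le\E[X\tilde Y]$: together with the trivial reverse bound (densities comonotonic to $T_n(x,a,Z_{n+1})$ form a subset of $\Qc_{n+1}^d$), it shows that the supremum in \eqref{eq:expectation} is unchanged by restricting the maximization to that subset.

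To establish $\E[XY]\le\E[X\tilde Y]$ I would argue that $h:=g\circ F_W^{-1}$ is increasing on $(0,1)$ and satisfies $h(U)=g(W)=X$ a.s., and that an increasing function whose image under a uniform random variable has the law of $X$ must coincide Lebesgue-a.e.\ with $F_X^{-1}$; hence $F_X^{-1}(U)=X$ a.s. Consequently the comonotonic coupling $\big(F_X^{-1}(U),F_Y^{-1}(U)\big)$ of $X$ with a copy of $Y$ is exactly $(X,\tilde Y)$, and the Hardy--Littlewood inequality yields
\[ \E[XY]\ \le\ \int_0^1 F_X^{-1}(t)\,F_Y^{-1}(t)\,\dif t\ =\ \E\big[F_X^{-1}(U)\,F_Y^{-1}(U)\big]\ =\ \E[X\tilde Y]. \]
The step I expect to be the main obstacle is precisely the identification $F_X^{-1}(U)=X$ a.s., i.e.\ that the monotone rearrangement of $X=g(W)$ is obtained by post-composing $g$ with the rearrangement of $W$; this is where the monotonicity of $g$ (Assumption \ref{ass:abstract_robust_real}) and the availability of the distributional transform on the atomless space genuinely enter, and it is the bridge between "being comonotonic to $T_n(x,a,Z_{n+1})$" and the extremal coupling in the rearrangement inequality. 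Some care is also needed with the generalized inverses at the jump points of the distribution functions, but the resulting discrepancies are Lebesgue-null and do not affect the integrals.
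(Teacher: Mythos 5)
Your argument is correct and follows essentially the same route as the paper: integrability via Lemma \ref{thm:abstract_robust_Lpbound}, law invariance to replace $Y$ by a comonotonic copy, the Hardy--Littlewood fact that comonotonic couplings maximize $\E[XY]$ over fixed margins, and the monotonicity of $x'\mapsto c_n(x,a,x')+v(x')$ to identify comonotonicity with the cost random variable and with $T_n(x,a,Z_{n+1})$. You merely make explicit (via the distributional transform and the identification $F_X^{-1}(U)=X$ a.s.) the steps the paper states in one line.
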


\begin{proof}
By Lemma \ref{thm:abstract_robust_Lpbound} $c_n\big(x,a,T_n(x,a,Z_{n+1})\big) +   v\big(T_n(x,a,Z_{n+1})\big)$ are in $L^p(\Omega_{n+1},\A_{n+1},\P_{n+1})$ for all $(x,a) \in D_n$. Thus the expectation \eqref{eq:expectation}
%	\[ \E\Big[ \Big(c_n\big(x,a,T_n(x,a,Z_{n+1})\big) +  v\big(T_n(x,a,Z_{n+1})\big)\Big) Y \Big] \]
	exists for all $Y \in  L^q( \mathcal{Z},  \mathfrak{Z}, \P^{Z_{n+1}} )$. Note that a product of r.v.\ with fixed margins is maximized in expectation when the r.v.\ are chosen comonotonic. Due to the law invariance of $\Qc_{n+1}^d$ we can find for every $Y \in \Qc_{n+1}^d$ some $Y' \in \Qc_{n+1}^d$ comonotonic to $c_n\big(x,a,T_n(x,a,Z_{n+1})\big) +   v\big(T_n(x,a,Z_{n+1})\big)$ such that $Y' \sim Y$ and \eqref{eq:expectation}
%	\begin{eqnarray*}
%&&	 \E\Big[ \Big(c_n\big(x,a,T_n(x,a,Z_{n+1})\big) + \beta  v\big(T_n(x,a,Z_{n+1})\big)\Big) Y \Big] \\
% \E\Big[ \Big(c_n\big(x,a,T_n(x,a,Z_{n+1})\big) +   v\big(T_n(x,a,Z_{n+1})\big)\Big) Y' \Big]
%\end{eqnarray*}
is maximal with $Y'$.	
	Since, the function $\R \ni x' \mapsto c_n\big(x,a,x'\big) +  v\big(x'\big)$ is increasing, this is the same as requiring comonotonicity to $T_n(x,a,Z_{n+1})$.
\end{proof}

For the comparison of marginal distributions one would naturally think of stochastic orders. Here, the convex order yields a sufficient criterion for optimality. In order to obtain a connection to risk measures, let us introduce the following notations:

\begin{definition}
	Let $F_X$ be the distribution function of a real-valued random variable $X$.
	\begin{itemize}
		\item[a)]  The \emph{(lower) quantile function} of $X$ is the left-continuous generalized inverse of $F_X$
		\[ \q_X(\alpha):=  q^-_X(\alpha):= \inf \{ x \in \R: \ F_X(x) \geq \alpha \}, \qquad \alpha \in (0,1). \]
		\item[b)] The \emph{upper quantile function} of $X$ is the right-continuous generalized inverse  of $F_X$
		\[  q^+_X(\alpha):= \inf \{ x \in \R: \ F_X(x) > \alpha \}, \qquad \alpha \in (0,1). \]
	\end{itemize}
\end{definition}

\begin{lemma}[{\cite[2.1]{Rueschendorf2009}}]\label{thm:A:distribution-fransform}
	For any random variable $X$ on an atomless probability space there exists a random variable $U_X \sim \U(0,1)$ such that 
	\begin{align*}\label{eq:distribution-transform}
	q_X^-(U_X) = q_X^+(U_X) = X \quad \P\text{-a.s.}
	\end{align*}
	The random variable $U_X$ is referred to as \emph{(generalized) distributional transform} of $X$. Note that if $h:\R\to\R$ is increasing and left-continuous, then $X$ and $h(X)$ have the same distributional transform.
\end{lemma}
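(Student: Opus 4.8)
The plan is to build the transform explicitly, by randomising over the atoms of $\mathcal{L}(X)$, and then to read off the two pointwise identities from a domination argument combined with an equality of laws. Write $J:=\{x\in\R:\P(X=x)>0\}$ for the (at most countable) set of atoms. First I would observe that for each $x\in J$ the set $\{X=x\}$ has positive probability, so, since $(\Omega,\A,\P)$ is atomless, the conditional space $\bigl(\{X=x\},\,\A\cap\{X=x\},\,\P(\,\cdot\mid X=x)\bigr)$ is again atomless and hence carries a uniform variable $V_x\sim\U(0,1)$. Patching these, and setting $V\equiv 0$ on $\{X\notin J\}$ (where it plays no role), gives a measurable $V:\Omega\to[0,1]$ with $\mathcal{L}(V\mid X=x)=\U(0,1)$ for every $x\in J$. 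I then define the candidate
\[ U_X:=F_X(X-)+V\bigl(F_X(X)-F_X(X-)\bigr), \]
so that $F_X(X-)\le U_X\le F_X(X)$ a.s., and $U_X=F_X(X)$ on $\{X\notin J\}$.

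To prove $U_X\sim\U(0,1)$ I would fix $t\in(0,1)$, put $q:=\q_X(t)$ (so $F_X(q-)\le t\le F_X(q)$) and split according to the value of $X$: on $\{X<q\}$ one has $U_X\le F_X(X)\le F_X(q-)\le t$; on $\{X>q\}$ one has $U_X\ge F_X(X-)\ge F_X(q)\ge t$, with $\{U_X=t\}$ of probability zero there; and $\{X=q\}$ contributes $0$ if $q\notin J$, while if $q\in J$ it contributes $\P(X=q)\,\frac{t-F_X(q-)}{F_X(q)-F_X(q-)}=t-F_X(q-)$ by conditional uniformity of $V$. Summing yields $\P(U_X\le t)=F_X(q-)+(t-F_X(q-))=t$, and $\P(U_X\le t)=F_X(q-)=F_X(q)=t$ when $q\notin J$; the analogous computation for $\{U_X<t\}$ closes the case, so $U_X$ is $\U(0,1)$.

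For the pointwise identities I would use the elementary equivalences $\q_X(u)\le x\iff F_X(x)\ge u$ and $F_X(x-)\le u\Rightarrow q_X^+(u)\ge x$: applied to the sandwich above they give $\q_X(U_X)\le X$ and $q_X^+(U_X)\ge X$ a.s. On the other hand, since $U_X\sim\U(0,1)$, for every $x$ one has $\P\bigl(\q_X(U_X)\le x\bigr)=\P\bigl(U_X\le F_X(x)\bigr)=F_X(x)$ and, sandwiching $\{u<F_X(x)\}\subseteq\{q_X^+(u)\le x\}\subseteq\{u\le F_X(x)\}$, also $\P\bigl(q_X^+(U_X)\le x\bigr)=F_X(x)$; thus both $\q_X(U_X)$ and $q_X^+(U_X)$ have law $\mathcal{L}(X)$. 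Finally, a random variable that is a.s.\ dominated by (resp.\ dominates) one with the same distribution must coincide with it a.s.; I would see this by composing with a bounded strictly increasing $\varphi$ (e.g.\ $\arctan$), so that $\varphi(X)-\varphi(\q_X(U_X))\ge 0$ has zero mean, forcing $\q_X(U_X)=X$ a.s., and symmetrically $q_X^+(U_X)=X$ a.s. The closing remark about an increasing left-continuous $h$ then follows from the composition rule $q_{h(X)}^{\pm}=h\circ q_X^{\pm}$ (left-continuity of $h$ makes $\{y:h(y)>z\}$ left-open, which supplies the strict inequality needed), whence $q_{h(X)}^{\pm}(U_X)=h(q_X^{\pm}(U_X))=h(X)$ a.s.

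The main obstacle is the uniformity step: it is mostly bookkeeping, but one must be genuinely careful at the boundary values $t=F_X(q-)$ and $t=F_X(q)$ and with the vanishing contribution of $\{X>q\}$, because $F_X$ may be flat on either side of $q$; once the quantile-function equivalences are in place the rest is immediate.
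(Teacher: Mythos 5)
The paper does not prove this lemma at all --- it is quoted from \cite[2.1]{Rueschendorf2009} --- so there is no internal proof to compare with; your argument is, in substance, the standard construction from that reference (randomize uniformly over the atoms of $\mathcal{L}(X)$ using atomlessness of $(\Omega,\A,\P)$, set $U_X=F_X(X-)+V\,(F_X(X)-F_X(X-))$, verify uniformity by splitting at $q=q^-_X(t)$, and upgrade the a.s.\ inequalities $q^-_X(U_X)\le X\le q^+_X(U_X)$ to equalities via equality in law plus a strictly increasing bounded test function). The construction of $V$, the uniformity computation (including the boundary case $t=F_X(q)$, where the contribution of $\{X>q\}$ is indeed null), and the domination-plus-equal-law argument are all correct.

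The one inaccurate step is the closing remark: the composition rule you invoke is valid for the lower quantile, $q^-_{h(X)}(u)=h\bigl(q^-_X(u)\bigr)$ for all $u\in(0,1)$ when $h$ is increasing and left-continuous, but the analogous identity for the upper quantile is false in general. For instance, take $h(y)=y$ for $y\le 0$ and $h(y)=y+1$ for $y>0$ (increasing, left-continuous) and $X\sim\U(-1,1)$: then $h\bigl(q^+_X(1/2)\bigr)=h(0)=0$, whereas $F_{h(X)}\equiv 1/2$ on $[0,1]$ gives $q^+_{h(X)}(1/2)=1$. This does not damage the conclusion, because the exceptional set is harmless under your $U_X$: the set of $u$ with $q^-_{h(X)}(u)\neq q^+_{h(X)}(u)$ is at most countable, and $U_X\sim\U(0,1)$ has a continuous law, so almost surely
\begin{align*}
q^+_{h(X)}(U_X)=q^-_{h(X)}(U_X)=h\bigl(q^-_X(U_X)\bigr)=h(X),
\end{align*}
which is exactly the statement that $U_X$ is a distributional transform of $h(X)$ (and is the form in which the lemma is used later in the paper). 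So replace the claim ``$q^{\pm}_{h(X)}=h\circ q^{\pm}_X$'' by the lower-quantile rule plus this null-set observation, and the proof is complete.
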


\begin{definition}
Let  $\phi:[0,1] \to \R_+$ be increasing and right-continuous with $\int_0^1 \phi(u)\dif u=1$. 	Functionals $\rho_{\phi} : L^p \to \bar{\R}$
\begin{align*} %\label{eq:spectral_rm}
	\rho_{\phi}(X):= \int_0^1 q_X(u) \phi(u) \dif u.
	\end{align*}
	are called  \emph{spectral risk measures} and $\phi$ is called \emph{spectrum.}
\end{definition}

When we choose the spectrum $\phi(u)=\frac{1}{1-\alpha} 1_{[\alpha,1]}(u)$  then we obtain the \emph{Expected Shortfall.} Note that every spectral risk measure is also coherent, i.e.\ monotone, translation-invariant, positive homogeneous and subadditive.

 In what follows we assume a non-atomic probability space.

\begin{lemma}\label{thm:abstract_robust_ambiguity_density}
	Let Assumptions \ref{ass:abstract_robust_finite}, \ref{ass:abstract_robust_real} be satisfied, $\Qc_{n+1}^d$ be law invariant and suppose there exists a maximal element $Y^*_{n+1}$ of $\Qc_{n+1}^d$ w.r.t.\ the convex order $\leq_{cx}$.
	\begin{itemize}
		\item[a)] Then 
		\begin{align*}
		\rho_\phi(X)= \sup_{Y \in \Qc_{n+1}^d} \E[X Y], \qquad X \in L^p(\Omega_{n+1},\A_{n+1},\P_{n+1}),
		\end{align*}
		defines a spectral risk measure with spectrum $\phi_{n+1}(u):=q^+_{Y^*_{n+1}}(u), \ u \in [0,1]$. In this case, $\gamma=(\gamma_0,\dots,\gamma_{N-1})$ with $\gamma_n(x,a)= \phi_{n+1}(U_{T_n(x,a,Z_{n+1})})$ is an optimal strategy of nature in  \eqref{eq:opt_crit_abstract_robust_finite}. Here $U_{T_n(x,a,Z_{n+1})}$ denotes the distributional transform of $T_n(x,a,Z_{n+1})$.
		\item[b)] If additionally, the disturbance space is the real line $\mathcal{Z}=\R$ and the transition function $T_n$ is increasing and lower semicontinuous in $z$, $\gamma=(\gamma_0,\dots,\gamma_{N-1})$ with $\gamma_n \equiv \phi_{n+1}(U_{Z_{n+1}})$ defines a constant optimal action of nature. That is, \eqref{eq:opt_crit_abstract_robust_finite}  can be reformulated to a risk-neutral MDP with probability measures $\dif \Q_{n+1} = \phi_{n+1}(U_{Z_{n+1}}) \dif \P_{n+1}$.
	\end{itemize}
\end{lemma}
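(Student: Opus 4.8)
The plan is to deduce part~a) from a one-stage statement about $\sup_{Y\in\Qc_{n+1}^d}\E[XY]$ for an arbitrary $X\in L^p(\Omega_{n+1},\A_{n+1},\P_{n+1})$ and then specialise $X$ to the integrand of the Bellman equation. As a preliminary, $\phi_{n+1}:=q^+_{Y^*_{n+1}}$ is a legitimate spectrum: it is non-decreasing and right-continuous as an upper quantile function, it is non-negative since $Y^*_{n+1}\ge 0$ $\P_{n+1}$-a.s., and $\int_0^1\phi_{n+1}(u)\,\dif u=\E[Y^*_{n+1}]=1$ because $Y^*_{n+1}$ is a $\P_{n+1}$-density. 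Hence $\rho_{\phi_{n+1}}$ will be a spectral (in particular coherent) risk measure as soon as the identity $\sup_{Y\in\Qc_{n+1}^d}\E[XY]=\rho_{\phi_{n+1}}(X)$ is established.

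For this identity, first observe that by law invariance of $\Qc_{n+1}^d$ and the Hardy--Littlewood/Fr\'echet rearrangement bound (already used in the proof of Lemma~\ref{thm:abstract_robust_ambiguity_comonotonic}), replacing $Y\in\Qc_{n+1}^d$ by a version $Y'\sim Y$ comonotonic to $X$, which again lies in $\Qc_{n+1}^d$, can only increase $\E[XY]$, and $\E[XY']=\int_0^1 q_X(u)q_Y(u)\,\dif u$; thus $\sup_{Y\in\Qc_{n+1}^d}\E[XY]=\sup_{Y\in\Qc_{n+1}^d}\int_0^1 q_X(u)q_Y(u)\,\dif u$. Next, since $q_X$ is non-decreasing and all densities in $\Qc_{n+1}^d$ share the mean $1$, a short integration by parts, whose boundary terms vanish because the primitives $\alpha\mapsto\int_0^\alpha q_Y(u)\,\dif u$ of convex-ordered densities agree at $0$ and at $1$, shows $Y_1\leq_{cx}Y_2\Rightarrow\int_0^1 q_X q_{Y_1}\,\dif u\le\int_0^1 q_X q_{Y_2}\,\dif u$. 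Consequently the $\leq_{cx}$-maximal element $Y^*_{n+1}$ realises the supremum, giving $\sup_{Y\in\Qc_{n+1}^d}\E[XY]=\int_0^1 q_X(u)\phi_{n+1}(u)\,\dif u=\rho_{\phi_{n+1}}(X)$ (the difference between $q^-_{Y^*_{n+1}}$ and $q^+_{Y^*_{n+1}}$ is Lebesgue-null). The supremum is attained: with $U_X$ the distributional transform of Lemma~\ref{thm:A:distribution-fransform}, the density $Y^{**}:=\phi_{n+1}(U_X)$ satisfies $Y^{**}\sim Y^*_{n+1}$, hence $Y^{**}\in\Qc_{n+1}^d$ by law invariance, and $(X,Y^{**})$ is comonotonic, so $\E[XY^{**}]=\rho_{\phi_{n+1}}(X)$.

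To conclude a), apply this at each stage of the Bellman equation of Theorem~\ref{thm:abstract_robust_finite} with $X=c_n\big(x,a,T_n(x,a,Z_{n+1})\big)+J_{n+1}\big(T_n(x,a,Z_{n+1})\big)$, which is in $L^p$ by Lemma~\ref{thm:abstract_robust_Lpbound} since $J_{n+1}\in\BB$. By Assumption~\ref{ass:abstract_robust_real}(iii) and Proposition~\ref{thm:abstract_robust_real} the map $x'\mapsto c_n(x,a,x')+J_{n+1}(x')$ is non-decreasing, and being lower semicontinuous it is also left-continuous; hence by the remark in Lemma~\ref{thm:A:distribution-fransform} this $X$ has the same distributional transform as $T_n(x,a,Z_{n+1})$, so the maximiser is $\gamma_n(x,a)=\phi_{n+1}\big(U_{T_n(x,a,Z_{n+1})}\big)$. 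Weak* measurability of $D_n\ni(x,a)\mapsto\gamma_n(x,a)\in\Qc_{n+1}$ follows from joint measurability of $(x,a,\omega)\mapsto U_{T_n(x,a,Z_{n+1})}(\omega)$, which comes from the explicit construction of the distributional transform on the atomless space together with measurability of $T_n$ and of $(x,a,t)\mapsto\P_{n+1}(\{z:T_n(x,a,z)\le t\})$. Then $\gamma=(\gamma_0,\dots,\gamma_{N-1})$ is a sequence of maximising decision rules for the robust value iteration and therefore an optimal policy of nature by Theorem~\ref{thm:abstract_robust_finite}b) and Corollary~\ref{thm:abstract_robust_nature_markov}. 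For b), the extra hypothesis makes $z\mapsto T_n(x,a,z)$ non-decreasing and, via lower semicontinuity, left-continuous, so the remark in Lemma~\ref{thm:A:distribution-fransform} now gives $U_{T_n(x,a,Z_{n+1})}=U_{Z_{n+1}}$ for the real-valued disturbance, independently of $(x,a)$; the rule from a) therefore collapses to the constant $\gamma_n\equiv\phi_{n+1}(U_{Z_{n+1}})=:Y^*\in\Qc_{n+1}^d$ (law invariance, $Y^*\sim Y^*_{n+1}$), and since $\sup_{\Q\in\Qc_{n+1}}L_nJ_{n+1}(x,a,\Q)=L_nJ_{n+1}(x,a,\Q_{n+1})$ for all $(x,a)$ with $\dif\Q_{n+1}=Y^*\,\dif\P_{n+1}$, the robust value iteration reduces to that of the risk-neutral MDP whose disturbance at stage~$n$ has law $\Q_{n+1}$ (cf.\ Lemma~\ref{thm: abstract_robust_integral_stoch_order}b)).

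The heart of the argument, and the step I expect to be most delicate, is the identity $\sup_{Y\in\Qc_{n+1}^d}\E[XY]=\rho_{\phi_{n+1}}(X)$: law invariance lets one pass to comonotonic densities, the rearrangement bound reduces the problem to marginal quantile functions, and convex-order maximality of $Y^*_{n+1}$ combined with the normalisation $\E[Y]=1$ pins down the optimal marginal and yields the spectral representation. The only other point requiring care is the measurability of the optimal decision rule $\gamma_n$ through the distributional transform, for which atomlessness of $(\Omega_{n+1},\A_{n+1},\P_{n+1})$ is essential.
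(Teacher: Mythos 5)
Your argument is correct and follows essentially the same route as the paper: comonotonic attainment via the distributional transform together with the $\leq_{cx}$-maximality of $Y^*_{n+1}$, then the identification $\tilde U = U_{T_n(x,a,Z_{n+1})}$ from the monotonicity and left-continuity of $x'\mapsto c_n(x,a,x')+J_{n+1}(x')$, and the collapse to a constant density in part b). The only difference is presentational: where the paper sandwiches $\Qc_{n+1}^d$ between law-invariant sets and cites the dual representation of spectral risk measures (Proposition \ref{thm:spectral_rm-dual}), you re-derive that representation by hand via the Hardy--Littlewood bound and a convex-order comparison of quantile integrals, and you additionally sketch the measurability of the decision rule $\gamma_n$, which the paper leaves implicit.
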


\begin{proof}
	\begin{enumerate}
		\item It holds $Y^*_{n+1}= q^+_{Y^*_{n+1}}(U_{Y^*_{n+1}})\ \P$-a.s.\ by Lemma  \ref{thm:A:distribution-fransform}. Therefore,
		\[ \Qc_{n+1}^d \subseteq \left\{ Y \in L^q(\Omega_{n+1},\A_{n+1},\P_{n+1}): \ Y \leq_{cx} \phi_{n+1}(U), \ U \sim \U(0,1) \right\}, \]
		and the random variables $\phi_{n+1}(U), \ U \sim \U(0,1)$ are contained in both sets due to law invariance. $\rho_\phi$ indeed defines a spectral risk measure (see Proposition \ref{thm:spectral_rm-dual})		
		 and $\phi_{n+1}(\tilde U)$ is an optimal action of nature at time $n$ given $(x,a)$, where $\tilde U$ is the distributional transform of the random variable
		$$c_n\big(x,a,T_n(x,a,Z_{n+1})\big) +   J_{n+1}\big(T_n(x,a,Z_{n+1})\big).$$
		Since the function $\R \ni x' \mapsto c_n\big(x,a,x'\big) +   v\big(x'\big)$ is increasing and lower semicontinuous, i.e.\ left continuous, it follows with Lemma \ref{thm:A:distribution-fransform} that $\tilde U = U_{T_n(x,a,Z_{n+1})}$. 
		\item Under the additional assumptions we have that $U_{T_n(x,a,Z_{n+1})} = U_{Z_{n+1}}$. \qedhere
	\end{enumerate}
\end{proof} 

Recall that the probability space under consideration is the product space.
Under the assumptions of Lemma \ref{thm:abstract_robust_ambiguity_density} b) we can replace the probability measure $\P$ by
\[ \widehat \Q:=  \bigotimes_{n=1}^{N-1} \Q^*_n, \qquad \dif \Q^*_n= \phi_n(U_{Z_{n}}) \dif \P_n \] 
and the optimization problem \eqref{eq:opt_crit_abstract_robust_finite} can  equivalently be written as 
\begin{align}\label{eq:abstract_robust_ambiguity_risk-neutral}
\inf_{\pi \in \Pi^M} \E_x^{\widehat \Q} \left[ \sum_{n=0}^{N-1}  c_n(X_n,d_n(X_n),X_{n+1}) +c_N(X_N)\right].
\end{align}
 With the reversed argumentation of Lemma \ref{thm:abstract_robust_ambiguity_density} a robust formulation of \eqref{eq:abstract_robust_ambiguity_risk-neutral} is given by
\begin{align}\label{eq:abstract_robust_ambiguity_new_robust}
\inf_{\pi \in \Pi^M} \sup_{\Q \in \Qf} \E_x^{\Q} \left[ \sum_{n=0}^{N-1}  c_n(X_n,d_n(X_n),X_{n+1}) +c_N(X_N)\right]
\end{align} 
with
\[ \Qf= \left\{ \bigotimes_{n=1}^{N-1} \Q_n: \ \dif \Q_n = Y_n \dif \P_n, \  Y_n \in L^q(\Omega_n,\A_n,\P_n), \ Y_n \leq_{cx} \phi_n(U_n), \ U_n \sim \U(0,1)  \right\}. \]
The $Y_n,$ are indeed densities. Now, \eqref{eq:abstract_robust_ambiguity_new_robust} can be interpreted as the minimization of a coherent risk measure $\rho$
\begin{align}\label{eq:eq:abstract_robust_ambiguity_new_robust_rm}
\inf_{\pi \in \Pi^M} \rho\Big( \sum_{n=0}^{N-1}  c_n(X_n,d_n(X_n),X_{n+1}) +c_N(X_N)\Big)
%= \sup_{\Q \in \Qf} \E^{\Q}[X], \qquad X \in L^p(\Omega,\A,\P)
\end{align}
and the problem can be solved with the value iteration
$$J_n(x) = \inf_{a\in D_n(x)} \rho_n \Big( c_n(x,a,T_n(x,a,Z_{n+1}))+J_{n+1}(T_n(x,a,Z_{n+1}))\Big)$$
where $\rho_n(X) = \sup_{Y \in \Qc_{n+1}^d} \E[X Y]$ and $J_0(x)$ gives the minimal value \eqref{eq:abstract_robust_ambiguity_new_robust}.

\begin{remark}
Some authors already considered the problem of optimizing risk measures of dynamic decision processes. For example  \cite{ruszczynski2010risk} considered Markov risk measures for finite and discounted infinite horizon models. In the final section he briefly discusses the relation to min-max problems where player 2 chooses the measure. 
\cite{shen2013risk} treat similar problems (also with average cost)  with different properties of the risk maps. More specific applications (dividend problem and economic growth) with the recursive entropic risk measures are treated in \cite{BaeuerleJaskiewicz2017, BaeuerleJaskiewicz2018}. In \cite{chow2015risk} the authors treat the  dynamic average-value at risk as a min-max problem. For this risk measure there are also alternative ways for the solution, see e.g. \cite{BaeuerleOtt2011}, 
\end{remark}

\section{Application}
\subsection{LQ Problem}
We consider here so-called linear-quadratic (LQ) problems. The state and action space are $E=\R$ and $A=\R^d$. Let $U_1,\dots, U_n$, $V_1,\dots,V_N$ be $\R$- and $\R^{ d}$-valued random vectors, respectively, and $W_1,\dots,W_N$ be random variables with values in $\R$. The random elements $\{Z_n=(U_n,V_n,W_n)\}_{1\leq n \leq N}$ are independent and the $n$-th element is defined on $(\Omega_n,\A_n,\P_n)$. It is supposed that the disturbances $\{Z_n\}_{1\leq n \leq N}$ have finite $2p$-th moments, $p \geq 1$.   The transition function is given by
\[ T_n(x,a,Z_{n+1}) = U_{n+1}x + V_{n+1}^\top a + W_{n+1} \]
for $n=0,\dots,N-1$. Furthermore, let there be deterministic positive constants  $Q_0,\dots,Q_N \in \R_+$ and deterministic positive definite symmetric matrices $R_0,\dots,R_{N-1} \in \R^{d \times d}$. The one-stage cost functions are
\[ c_n(x,a,x') = c_n(x,a)=x^2 Q_n  + a^\top R_n a \]
and the terminal cost function is $c_N(x)=x^2 Q_N $. Hence, the optimization problem under consideration is
\begin{align}\label{eq:opt_crit_LQ}
\inf_{\pi \in \Pi^R} \sup_{\gamma \in \Gamma} \ \E_{0x}^{\pi \gamma} \left[ \sum_{k=0}^{N-1} X_k^2 Q_k + A_k^\top R_k A_k + X_N^2 Q_N  \right].
\end{align}
Policy values and value functions are defined in the usual way. 
For different formulations of robust LQ problems, see e.g. \cite{hansen1999five}.

Since  $Q_n$ and the matrices $R_n$ are positive semidefinite, $\ubar b \equiv 0$ is a lower bounding function and the one-stage costs are at least quasi-integrable. In the sequel, we will determine the value functions and optimal policy by elementary calculations and will show that the value functions are convex and therefore continuous. Hence, we can dispense with an upper bounding function and compactness of the action space.

Since the Borel $\sigma$-algebra of a finite dimensional Euclidean space is countably generated, it is no restriction to assume that the probability measures $\P_{n+1}$ are separable. Further, we assume that for $n=0,\dots,N-1$
\begin{itemize}
	\item[(i)] the ambiguity sets $\Qc_{n+1} \subseteq \M^q(\Omega_{n+1},\A_{n+1},\P_{n+1})$ are norm bounded and weak* closed.
	\item[(ii)] it holds $\E^\Q[W_{n+1}]=0$ for all $\Q \in \Qc_{n+1}$. 
	\item[(iii)] $W_{n+1}$ and $(U_{n+1},V_{n+1})$ are independent for all $\Q \in \Qc_{n+1}$, i.e. $\Q=\Q_{W_{n+1}}\otimes \Q_{U_{n+1},V_{n+1}}$. 
\end{itemize}
I.e.\ Assumption \ref{ass:abstract_robust_finite} is satisfied apart from upper bounding. Theorems \ref{thm:abstract_robust_nature_bellman} and \ref{thm:abstract_robust_finite} use the bounding, continuity and compactness assumptions only to prove the existence of optimal decision rules. Thus, we can employ the Bellman equation and restrict the consideration to Markov policies as long as we are able to prove the existence of optimal decision rules on each stage. We proceed backwards.
At stage $N$, no action has to be chosen and the value function is $J_N(x)= x^2 Q_N $.
At stage $N-1$, we have to solve the Bellman equation
\begin{align}\label{eq:LQ_1}
	J_{N-1}(x)&= \inf_{a \in A} \sup_{\Q \in \Qc_{N}} c_{N-1}(x,a) + \E^\Q\left[ J_N(T(x,a,Z_{n+1})) \right]\\
	&= \inf_{a \in A} \sup_{\Q \in \Qc_{N}} x^2 Q_{N-1}  + a^\top R_{N-1} a+ \E^\Q\left[ (U_N x + V_N^\top a + W_N)^2 Q_N \right]\notag\\
	&= \inf_{a \in A} \sup_{\Q \in \Qc_{N}} x^2  Q_{N-1} + a^\top R_{N-1} a + \E^\Q\left[ x^2 U_N^2   +  (V_N^\top a)^2   \right.
	%\notag\\
	%&\phantom{= \inf_{a \in A} \sup_{\Q \in \Qc_{N}}}\ 
\left. 
	+ 2 x U_N  V_N^\top a + W_N^2   \right] Q_N \notag
\end{align}
For the last equality we used  that $\E^\Q\left[ 2 W_N (U_Nx+V_Na)  \right]=0$ by assumption. Since $R_{N-1}$ and $Q_N$ are positive (semi-) definite, the objective function \eqref{eq:LQ_1} is strictly convex in $a$. Moreover, it is linear and especially concave in $\Q$. Finally, $\Qc_N$ is weak* compact by the Theorem of Banach-Alaoglu \cite[6.21]{AliprantisBorder2006} the  objective function \eqref{eq:LQ_1} is continuous in $\Q$ by definition of the weak* topology since the integrand is in $L^p(\Omega_{n+1},\A_{n+1},\P_{n+1})$. Thus, the requirements of Sion's Minimax Theorem \ref{thm:A:minimax} b) are satisfied and we can interchange infimum and supremum in \eqref{eq:LQ_1}, i.e.\
\begin{align}\label{eq:LQ_2}
	J_{N-1}(x) %&= 
	%\sup_{\Q \in \Qc_{N}} \inf_{a \in A} x^2 Q_{N-1}  + a^\top R_{N-1} a + \E^\Q\left[ %x^2 U_N^2 Q_N + a^\top V_N^\top Q_N V_N a \right.\notag\\
%	&\phantom{= \inf_{a \in A} \sup_{\Q \in \Qc_{N}}}\ \left. + 2 x U_N Q_N V_N a + %W_N^\top Q_N W_N \right] \notag\\
	&= \sup_{\Q \in \Qc_{N}} \inf_{a \in A} x^2  Q_{N-1}  + a^\top R_{N-1} a +  x^2\E^\Q[ U_N^2] Q_N  + a^\top \E^\Q[V_N^\top V_N] a Q_N \notag\\ 
	&\phantom{= \inf_{a \in A} \sup_{\Q \in \Qc_{N}}}\ + 2 x  Q_N \E^\Q[U_N V_N^\top] a + \E^\Q[W_N^2] Q_N
\end{align}

In order to solve the inner minimization problem it suffices due to strict convexity and smoothness to determine the unique zero of the gradient of the objective function.
\begin{align*}
	& 0 = 2R_{N-1} a +2\E^\Q[V_N^\top V_N] a Q_N + 2 x \E^\Q[V_N^\top U_N] Q_N\\
	\Longleftrightarrow \quad & a= - \big(R_{N-1} + \E^\Q[V_N^\top V_N]Q_N \big)^{-1} \E^\Q[V_N^\top  U_N]Q_N x. 
\end{align*}
Note that the matrix $ \big(R_{N-1} + \E^\Q[V_N^\top V_N] \big)Q_N$ is positive definite and therefore invertible due to the positive (semi-) definiteness of $R_{N-1}$ and $Q_N$. 
%Setting 
%\[ L_{N-1}^\Q = - \Big(R_{N-1} + \E^\Q[V_N^\top  V_N] \Big)^{-1}Q_N^2 \E^\Q[V_N^\top  U_N]  \]and i
Inserting in \eqref{eq:LQ_2} gives
\begin{align}\label{eq:LQ_3}
	J_{N-1}(x) 
	%\sup_{\Q \in \Qc_{N}} \E^\Q[W_N^2] Q_N  + x^2 \Bigg( Q_{N-1} +\E^\Q[ U_N^2] %Q_N+ L_{N-1}^{\Q\top} R_{N-1} L_{N-1}^{\Q}\notag\\
%	& \phantom{= \sup_{\Q \in \Qc_{N}}}\ + L_{N-1}^{\Q\top}\E^\Q[V_N^\top  %V_N]L_{N-1}^{\Q} Q_N + 2\E^\Q[U_N^\top  V_N] L_{N-1}^\Q Q_N\Bigg)\notag\\
%	&=\sup_{\Q \in \Qc_{N}} \E^\Q[W_N^2] Q_N  + x^2 \Bigg( Q_{N-1} +\E^\Q[ %U_N^2]Q_N\notag\\ 
%    &\phantom{= \sup_{\Q \in \Qc_{N}}}\ + \E^\Q[U_N V_N] Q_N \Big(R_{N-1} + \E^\Q[V_N^\top  V_N] Q_N \Big)^{-1} \E^\Q[V_N^\top  U_N] Q_N\notag\\  
%	&\phantom{= \sup_{\Q \in \Qc_{N}}}\ - 2\E^\Q[U_N^\top  V_N] Q_N \Big(R_{N-1} + %\E^\Q[V_N^\top  V_N] Q_N \Big)^{-1} \E^\Q[V_N^\top  U_N] Q_N \Bigg)\notag\\
	&=\sup_{\Q \in \Qc_{N}} \E^\Q[W_N^2] Q_N  + x^2 \Big( Q_{N-1} +\E^\Q[ U_N^2] Q_N \notag\\ 
	&\phantom{=\sup_{\Q \in \Qc_{N}}}\ - \E^\Q[U_N V_N] \Big(R_{N-1} + \E^\Q[V_N^\top  V_N] Q_N \Big)^{-1} \E^\Q[V_N^\top  U_N]Q_N^2 \Big)\notag\\
	&=\sup_{\Q \in \Qc_{N}} \E^\Q[W_N^2] Q_N  + x^2 K_{N-1}^\Q = 
	\sup_{\Q_{W_N}} \E^\Q[W_N^2] Q_N  + x^2 \sup_{\Q_{U_N,V_N}}  K_{N-1}^\Q 
\end{align}
where
\begin{align*}
	K_{N-1}^\Q &= Q_{N-1} +\E^\Q[ U_N^2] Q_N  - \E^\Q[U_N^\top  V_N] \Big(R_{N-1} + \E^\Q[V_N^\top  V_N] Q_N \Big)^{-1} \E^\Q[V_N^\top  U_N] Q_N^2.
\end{align*}
Since $J_n(x)\ge 0$ for all $x\in\R$ we must have $K_{N-1}^\Q>0$ and since $\Qc_{N}$ is weak* compact and $\Q \mapsto \E^\Q[W_N^2] Q_N  + x^2 K_{N-1}^\Q$ weak* continuous, there exists an optimal measure $\gamma_{N-1}(x)=\Q^*_N=\Q^*_{W_N}\otimes \Q^*_{U_N,V_N}$ which is independent of $x$ due to our independence assumption (iii). Since $J_{N-1}$ is again quadratic we obtain when we define
\begin{align*}
	K_{n-1}^\Q &= Q_{n-1} +K_n^\Q \Big(\E^\Q[ U_n^2]   - \E^\Q[U_n^\top  V_n] \Big(R_{n-1}/K_n^\Q + \E^\Q[V_n^\top  V_n]  \Big)^{-1} \E^\Q[V_n^\top  U_n] \Big)\\
	L_{n-1}^\Q &= - \Big(R_{n-1}/K_n^\Q + \E^\Q[V_n^\top  V_n] \Big)^{-1} \E^\Q[V_n^\top  U_n] 
\end{align*}
that $\gamma^*_n(x)\equiv \gamma^*_n= \argmax_{\Q_{W_n}} \E^\Q[W_n^2]\otimes \argmax_{\Q_{U_n,V_n}}  K_n^\Q $ and $d_n^*(x) =L_n^{\gamma^*_n} x$. Since the third term of $K_{n-1}^\Q$ is a quadratic form nature should choose $\Q$ such that $\E^\Q[V_n^\top  U_n] =0$ if possible and maximize the second moment of $U_n$. If this is possible, we obtain  $d_n^*(x)=0$, i.e. the controller will not control the system. In any case we see here the optimal choice of nature $\gamma^*_n(x)$ does not depend on $x$. 

When we specialize the situation to $d=1$ we obtain
\begin{align*}
	K_{n-1}^\Q &= Q_{n-1} +\Big( \E^\Q[ U_n^2]   -\frac{(\E^\Q[U_n V_n]^2}{\frac{R_{n-1}}{K_n^\Q}+\E^\Q[V_n^2 ] } \Big)  K_n^\Q.
\end{align*}
Thus, nature has to maximize the expression in brackets. For a moment we skip the index $n$. Let us further assume that $(U,V)$ is jointly normally distributed with expectations $\mu_U$ and $\mu_V$ and variances $\sigma^2_U$ and $\sigma^2_V$ and covariance $\sigma_{U,V}$ and that all these parameters may be elements of compact intervals. Then the expression in brackets reduces to
$$\sigma_U^2 +\mu_U^2 - \frac{(\sigma_{UV}+\mu_U\mu_V)^2}{\frac{R}{K^\Q}+\sigma_V^2 +\mu_V^2 }.$$
We see immediately that both $\sigma_V^2$ and $\sigma_U^2$ have to be chosen as maximal possible value. The remaining three parameters $\mu_U, \mu_V$ and $\sigma_{UV}$ have to minimize the fraction. If it is possible to choose them such that $\mu_U$ is maximal and $ \sigma_{UV}+\mu_U\mu_V=0$ this would be optimal.

\subsection{Managing regenerative energy}
The second example is the management of a joint wind and storage facility. Before each period the owner of the wind turbine has to announce the amount $a$ of energy she wants to produce. If there is enough wind in the next period she receives the reward $P a$ where we assume that $P>0$ is the fixed price for energy. Additional energy which may have been produced will be stored in a battery with capacity $K>0$. If there is not enough wind in the next period, the storage device will be used to cover the shortage. In case this is still not enough, the remaining shortage will be penalized by a proportional cost rate $c>0$ (see \cite{anderson2015optimal} for further background). We consider a robust version here, i.e. the distribution $\Q$ of the produced wind energy varies in a set $\Qc$ with bounded support $[0,B]$. The state is the amount of energy in the battery, hence $E=[0,K]$. Further $A=[0,B]=D(x)$ and the action is the amount of energy which is bid.   We obtain the following Bellman equation:
\begin{eqnarray*}
J_n(x) &=& \inf_{a\in D(x)} \sup_{\Q\in \Qc} \Big\{ \int_a^B -aP +J_{n+1}((x+z-a)\wedge K)\Q(dz)\\
&& + \int_0^a -(a \wedge (z+x)) P +(a-z-x)^+ c +J_{n+1}((x+z-a)^+) \Q(dz)\Big\}
\\
&=&  \inf_{a\in D(x)} \sup_{\Q\in \Qc} \Big\{ -aP+\int_a^B J_{n+1}((x+z-a)\wedge K)\Q(dz)\\
&& + \int_0^a (P+c)(x+z-a)^-  +J_{n+1}((x+z-a)^+) \Q(dz)\Big\}
\end{eqnarray*}
From the last representation we see that 
$$
T(x,a,z)=  \left\{ \begin{array}{ll}
(x+z-a)\wedge K), & z\ge a\\
(x+z-a)^+,& 0\le z\le a
\end{array}\right.$$
and
$$
c(x,a,T(x,a,z))= - aP + \left\{ \begin{array}{ll}
0, & z\ge a\\
(P+c)(x+z-a)^-,& 0\le z\le a
\end{array}\right.$$
We see that $D(x)$ is compact and $x\mapsto D(x)$ is lower semicontinuous. $T$ is continuous and $c$ is continuous and bounded. It is easy to see that $J_n(x)$ is decreasing in $x$. Suppose $\Q$ has a minimal element $\Q^*$ w.r.t.\ $\le_{st}$. Then we can omit the $\sup_{\Q\in \Qc} $ and replace $\Q$ by $\Q^*$. The remaining Bellman equation is then a standard MDP.

\section{Appendix}
\subsection{Additional Proofs}
Proof of Lemma \ref{thm:ambiguity_sep_metrizable}:

	Recall that we identify $\Qc_n$ with the set of the corresponding densities $\Qc_n^d$. The closure $\overline \Qc_n^d$ of $\Qc_n^d$ remains norm bounded. This can be seen as follows: Let $ X \in \overline \Qc_n^d$. Then there exists a net $\{X_\alpha\}_{\alpha \in I} \subseteq \Qc_n^d$ such that $X_\alpha \overset{w^*}{\to} X$. Hence,
	\[ \E[X_\alpha Y] \to \E[XY] \qquad \text{for all } Y \in L^p(  \Omega_n,  \A_n, \P_n ) \text{ with } \|Y\|_{L^p}=1. \]
	By H\"older's inequality we have for all $\alpha \in I$
	\[ \left| \E[X_\alpha Y] \right| \leq \E|X_\alpha Y| \leq \|X_\alpha\|_{L^q} \|Y\|_{L^p} = \|X_\alpha\|_{L^q} \leq K.  \]
	Thus, $|\E[XY]| \leq K$. Finally, due to duality it follows
	\[ \|X\|_{L^q} = \sup_{\|Y\|_{L^p}=1 } |\E[XY]| \leq K. \]
	The separability of the probability measure $\Pop_n$ makes $L^p(\Omega_n,\A_n,\Pop_n)$ a separable Banach space. 
Consequently, the weak* topology is metrizable on the norm bounded set $\overline \Qc_n^d$ \cite[p. 157]{Morrison2001}. The trace topology on the subset $\Qc_n^d \subseteq \overline \Qc_n^d$ coincides with the topology induced by the restriction of the metric \cite[4.4.1]{OSearcoid2007}, i.e.\ $\Qc_n^d$ is metrizable, too.
	
	Since $\overline \Qc_n^d$ is norm bounded and weak* closed, the Theorem of Banach-Alaoglu \cite[6.21]{AliprantisBorder2006}  yields that it is weak* compact. As a compact metrizable space $\overline \Qc_n^d$ is complete \cite[3.28]{AliprantisBorder2006} and also separable \cite[3.26, 3.28]{AliprantisBorder2006}. Hence, $\overline \Qc_n^d$ is a Borel Space. The set of densities $\Qc_n^d$ is also separable as a subspace of a separable metrizable space \cite[3.5]{AliprantisBorder2006}. \hfill $\Box$\\
	
Proof of Lemma \ref{thm:abstract_robust_kernel}:

\begin{proof}
	By definition, $\gamma_n(\cdot|h_n,a_n)$ is a probability measure for every $(h_n, a_n)\in \H_n\times A$. Now fix $B \in \A_{n+1}$. The mapping $\delta: \Qc_{n+1} \to [0,1], \ \delta(B) = \E^\Q[1_B]$ is weak*-continuous since $1_B \in L^p( \Omega_{n+1},  \A_{n+1}, \P_{n+1})$ and hence Borel measurable. Therefore, 
	\[ \H_n\times A \ni (h_n,a_n) \mapsto \gamma_n(B|h_n,a_n) = \delta \circ \gamma_n(h_n,a_n) \]
	is measurable as a composition of measurable maps. 
\end{proof}

\subsection{Additional Statements}
\begin{lemma}\label{thm:abstract_robust_Lpbound}
	Let $v \in \BB_b$ and $n \in \{0,\dots,N-1\}$. Under Assumption \ref{ass:abstract_robust_finite} (ii) each sequence of random variables
	\[ C_k= c_n\big(x_k,a_k,T_n(x_k,a_k,Z_{n+1})\big) + v\big(T_n(x_k,a_k,Z_{n+1})\big) \] 
	induced by a convergent sequence $\{(x_k,a_k)\}_{k \in \N}$ in $D_n$ has an $L^p$-bound $\bar C$, i.e.\ $|C_k| \leq \bar C \in L^p(\Omega_{n+1},\A_{n+1},\P_{n+1})$ for all $k \in \N$. 
\end{lemma}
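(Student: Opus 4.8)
The plan is to use the local domination furnished by Assumption \ref{ass:abstract_robust_finite} (ii) directly, splitting the sequence into its tail and its finitely many initial terms. Write $(\bar x,\bar a)$ for the limit of $\{(x_k,a_k)\}_{k\in\N}$, which lies in $D_n$, and fix $\lambda\in\R_+$ with $|v(y)|\le \lambda\, b(y)$ for all $y\in E$; such a $\lambda$ exists since $v\in\BB_b$. Applying Assumption \ref{ass:abstract_robust_finite} (ii) at $(\bar x,\bar a)$ gives $\epsilon>0$ and measurable $\Theta_{n,1}^{\bar x,\bar a},\Theta_{n,2}^{\bar x,\bar a}:\Z\to\R_+$ with $\Theta_{n,1}^{\bar x,\bar a}(Z_{n+1}),\Theta_{n,2}^{\bar x,\bar a}(Z_{n+1})\in L^p(\Omega_{n+1},\A_{n+1},\P_{n+1})$ such that, for every $z\in\Z$ and every $(x,a)\in B_\epsilon(\bar x,\bar a)\cap D_n$,
\[ |c_n(x,a,T_n(x,a,z))|+|v(T_n(x,a,z))|\le \Theta_{n,1}^{\bar x,\bar a}(z)+\lambda\, b(T_n(x,a,z))\le \Theta_{n,1}^{\bar x,\bar a}(z)+\lambda\,\Theta_{n,2}^{\bar x,\bar a}(z). \]

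First I would pick $k_0\in\N$ with $(x_k,a_k)\in B_\epsilon(\bar x,\bar a)$ for all $k\ge k_0$, which is possible by convergence; the display above then bounds $|C_k|$ for every $k\ge k_0$ by the single $L^p$-function $\Theta_{n,1}^{\bar x,\bar a}(Z_{n+1})+\lambda\,\Theta_{n,2}^{\bar x,\bar a}(Z_{n+1})$. For each of the finitely many remaining indices $k<k_0$ I would invoke Assumption \ref{ass:abstract_robust_finite} (ii) once more, now at the point $(x_k,a_k)$ itself, which trivially lies in the corresponding ball around itself intersected with $D_n$; this yields $D_k:=\Theta_{n,1}^{x_k,a_k}(Z_{n+1})+\lambda\,\Theta_{n,2}^{x_k,a_k}(Z_{n+1})\in L^p(\Omega_{n+1},\A_{n+1},\P_{n+1})$ with $|C_k|\le D_k$ a.s. Then
\[ \bar C:=\max\bigl\{D_0,\dots,D_{k_0-1},\ \Theta_{n,1}^{\bar x,\bar a}(Z_{n+1})+\lambda\,\Theta_{n,2}^{\bar x,\bar a}(Z_{n+1})\bigr\} \]
dominates $|C_k|$ for every $k\in\N$ and lies in $L^p(\Omega_{n+1},\A_{n+1},\P_{n+1})$, being a maximum of finitely many $L^p$-functions; this is the asserted bound.

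There is no genuine difficulty here — the only subtlety is that Assumption \ref{ass:abstract_robust_finite} (ii) provides domination only on a neighbourhood of each fixed point, so the tail of the sequence must be controlled uniformly by the bound at the limit $(\bar x,\bar a)$ while the finitely many early terms are controlled one at a time, and passing to a finite maximum preserves $L^p$-membership. An equivalent route would be to cover the compact set $\{(x_k,a_k):k\in\N\}\cup\{(\bar x,\bar a)\}\subseteq D_n$ by finitely many of the balls $B_\epsilon$ centred in $D_n$ and take the maximum of the associated dominating functions; I would prefer the first, more elementary argument.
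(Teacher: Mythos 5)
Your proof is correct and follows essentially the same route as the paper's: control the tail of the sequence uniformly by the dominating functions $\Theta_{n,1}^{\bar x,\bar a},\Theta_{n,2}^{\bar x,\bar a}$ attached to the limit point, handle the finitely many initial indices by invoking Assumption \ref{ass:abstract_robust_finite} (ii) at each of those points, and take a finite maximum, which stays in $L^p$. The only cosmetic difference is that the paper explicitly remarks that the limit point belongs to $D_n$ because $D_n$ is closed (which follows from Assumption \ref{ass:continuity_compactness} (i)), whereas you state this membership without comment.
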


\begin{proof}
	There exists a constant $\lambda \in \R_+$ such that $|v|\leq \lambda b$. Since $D_n$ is closed, the limit point $(x_0,a_0)$ of $\{(x_k,a_k)\}_{k \in \N}$ lies in $D_n$. Let $\epsilon>0$ be the constant from Assumption \ref{ass:abstract_robust_finite} (ii) corresponding to $(x_0,a_0)$. Since the sequence is convergent, there exists $m \in \N$ such that $(x_k,a_k) \in B_\epsilon(x_0,a_0) \cap D_n$ for all $k > m$. For the finite number of points $(x_0,a_0),  (x_1,a_1), \dots, (x_m,a_m)$ there exist bounding functions $\Theta_{n,1}^{x_i,a_i}, \Theta_{n,2}^{x_i,a_i}$ by Assumption \ref{ass:abstract_robust_finite} (iii). Thus, the random variable 
	\[ \bar C= \max_{i=0,\dots,m} \Big(\Theta_{n,1}^{x_i,a_i}(Z) + \lambda \Theta_{n,2}^{x_i,a_i}(Z)  \Big) \]
	is an $L^p$-bound as desired.
\end{proof}

\begin{theorem}[{\cite[4.1, 4.2]{Sion1958}}] \phantomsection \label{thm:A:minimax}
	\begin{itemize}
		\item[a)] Let $X$ be any set, $Y$ compact and $f:X \times Y \to \bar \R$ concave-convex-like and lower semicontinuous in the second argument, then
		\[ \sup_{x \in X} \inf_{y \in Y} f(x,y) = \inf_{y \in Y} \sup_{x \in X} f(x,y).  \]
		\item[b)] Let $X$ be compact, $Y$ any set and  $f:X \times Y \to \bar \R$ concave-convex-like and upper semicontinuous in the first argument, then
		\[ \sup_{x \in X} \inf_{y \in Y} f(x,y) = \inf_{y \in Y} \sup_{x \in X} f(x,y).  \]
	\end{itemize}
\end{theorem}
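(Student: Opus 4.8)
The plan is to deduce (a) from (b) by a symmetry substitution, and then to prove (b) by a compactness reduction that cuts $Y$ down to finitely many points, followed by Sion's two‑point lemma.

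\emph{Reduction of (a) to (b).} Put $g(y,x):=-f(x,y)$. Then $g$ is again concave‑convex‑like (concave‑like in $y$, convex‑like in $x$), and since $f(x,\cdot)$ is lower semicontinuous, $g$ is upper semicontinuous in its first argument $y$, whose domain $Y$ is compact; moreover $\sup_{y}\inf_{x}g=-\inf_{y}\sup_{x}f$ and $\inf_{x}\sup_{y}g=-\sup_{x}\inf_{y}f$, so the minimax equality for $g$ provided by (b) is exactly the one claimed for $f$ in (a). Hence it suffices to prove (b). The inequality $\sup_{x\in X}\inf_{y\in Y}f(x,y)\le\inf_{y\in Y}\sup_{x\in X}f(x,y)$ is trivial (for all $x_0,y_0$ one has $\inf_{y}f(x_0,y)\le f(x_0,y_0)\le\sup_{x}f(x,y_0)$, then take $\sup_{x_0}$ and $\inf_{y_0}$), so everything reduces to the reverse inequality in (b).

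\emph{Compactness reduction.} Suppose the reverse inequality fails and pick $c_1<c_2$ with $\sup_{x}\inf_{y}f<c_1<c_2<\inf_{y}\sup_{x}f$. For each $y\in Y$ the set $A_y:=\{x\in X:f(x,y)\ge c_2\}$ is closed, because $f(\cdot,y)$ is upper semicontinuous, and non‑empty, because $\sup_{x}f(x,y)>c_2$. If $\bigcap_{y\in Y}A_y\ne\emptyset$, any $x^*$ in it would satisfy $\inf_{y}f(x^*,y)\ge c_2>c_1>\sup_{x}\inf_{y}f(x,y)\ge\inf_{y}f(x^*,y)$, a contradiction; so $\bigcap_{y\in Y}A_y=\emptyset$, and compactness of $X$ yields $y_1,\dots,y_m\in Y$ with $\bigcap_{i=1}^{m}A_{y_i}=\emptyset$, i.e. $\min_{1\le i\le m}f(x,y_i)<c_2$ for every $x\in X$, hence $\sup_{x}\min_{i}f(x,y_i)\le c_2$. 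It therefore remains to prove the finite minimax inequality
\[ \inf_{y\in Y}\sup_{x\in X}f(x,y)\;\le\;\sup_{x\in X}\min_{1\le i\le m}f(x,y_i), \]
since together with $\sup_{x}\min_i f(x,y_i)\le c_2$ it contradicts $c_2<\inf_{y}\sup_{x}f$.

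\emph{The two‑point lemma (the hard part).} I would prove this finite inequality by induction on $m$, the inductive step reducing to the case $m=2$ by grouping the last two indices; the case $m=1$ is trivial. For $m=2$ one must exhibit a single $\bar y\in Y$ with $\sup_{x}f(x,\bar y)\le\sup_{x}\min\{f(x,y_1),f(x,y_2)\}$. The concave‑convex‑like hypothesis enters twice: convex‑likeness in the second variable produces, for every weight $t\in[0,1]$, a point $y_t\in Y$ whose payoff function $f(\cdot,y_t)$ is dominated by the corresponding combination of $f(\cdot,y_1)$ and $f(\cdot,y_2)$, while concave‑likeness in the first variable, together with upper semicontinuity and compactness of $X$ (which make all suprema over $X$ attained and keep the superlevel sets $\{x:f(x,y_t)\ge\lambda\}$ closed), controls those superlevel sets. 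One then runs a connectedness argument on $t\in[0,1]$: splitting $[0,1]$ according to whether $\sup_{x}f(\cdot,y_t)$ is ``driven'' by the part of $X$ on which $f(\cdot,y_1)$ dominates or by the part on which $f(\cdot,y_2)$ dominates, the closedness and the nesting/intersection pattern of the superlevel sets force a value $t^*$ with $\sup_{x}f(x,y_{t^*})\le\sup_{x}\min\{f(x,y_1),f(x,y_2)\}$, and $y_{t^*}$ is the required $\bar y$. The main obstacle is precisely this lemma: the compactness reduction is routine, but because no linear structure on $Y$ is assumed --- only the combinatorial convex‑likeness axiom --- one cannot simply take midpoints, and the interval‑splitting argument must be carried through while constantly re‑invoking that axiom and exploiting the $X$‑side semicontinuity to keep the relevant level sets closed and non‑empty. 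That is where I would concentrate the effort.
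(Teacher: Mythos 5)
Your overall architecture is sound and is essentially the classical Fan--Sion route for concave-convex-like payoffs (note the paper itself offers no proof of this statement; it is quoted from Sion, so the comparison is against the literature argument): the reduction of (a) to (b) via $g(y,x)=-f(x,y)$ is correct, the weak inequality is trivial, and the finite-intersection-property reduction to finitely many points $y_1,\dots,y_m$ using compactness of $X$ and upper semicontinuity of $f(\cdot,y)$ is exactly right. The genuine gap is in the one step you yourself flag as hard, and the method you propose for it cannot work in this generality. Your connectedness argument on $t\in[0,1]$ is modeled on proofs of Sion's quasiconcave--quasiconvex theorem (Komiya-style), where $y_t$ really is the point $t y_1+(1-t)y_2$ of a convex set $Y$ carrying a topology, $f(x,\cdot)$ is lower semicontinuous, and quasiconvexity gives the covering $\{x:f(x,y_t)\ge\lambda\}\subseteq\{x:f(x,y_1)\ge\lambda\}\cup\{x:f(x,y_2)\ge\lambda\}$; the closedness of the two classes of $t$ --- the crux of any connectedness argument --- comes precisely from the topology on $Y$ and the semicontinuity in $y$. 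In part (b) as stated, $Y$ is an arbitrary set, $f$ has no continuity in $y$ at all, and convex-likeness only provides, for each $t$, \emph{some} point $y_t$ with $f(x,y_t)\le t f(x,y_1)+(1-t)f(x,y_2)$ for all $x$, chosen with no coherence across $t$; hence neither the level-set covering nor the closedness of your two classes can be established, and the interval-splitting argument does not get started. Moreover, the induction ``grouping the last two indices'' does not go through as stated: a bound on $\sup_x f(x,\bar y)$ by $\sup_x\min\{f(x,y_{m-1}),f(x,y_m)\}$ does not control $\sup_x\min\{f(x,y_1),\dots,f(x,y_{m-2}),f(x,\bar y)\}$, because the two-point conclusion is not a pointwise domination.

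The standard and short way to close the gap needs no topology on $Y$ at all: a separating-hyperplane argument in $\R^m$. Suppose $\sup_{x}\min_{i\le m}f(x,y_i)\le c_2$. Consider $S=\{(f(x,y_1)-c_2,\dots,f(x,y_m)-c_2):x\in X\}\subset\R^m$. Concave-likeness in $x$ (the same $x_t$ works for all $y$ simultaneously) makes $S-\R_{+}^m$ convex, and the assumption says every point of $S$ has a nonpositive coordinate, so $S-\R_{+}^m$ is disjoint from the open positive orthant. Separation yields $\lambda_i\ge 0$ with $\sum_i\lambda_i=1$ and $\sum_i\lambda_i f(x,y_i)\le c_2$ for all $x\in X$; convex-likeness in $y$, iterated to $m$ points, then produces $y_\lambda\in Y$ with $f(x,y_\lambda)\le\sum_i\lambda_i f(x,y_i)\le c_2$ for all $x$, whence $\inf_y\sup_x f\le c_2$, contradicting $c_2<\inf_y\sup_x f$. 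With this replacement of your two-point lemma (no semicontinuity is even needed for this step), your compactness reduction completes the proof of (b), and (a) follows by your symmetry reduction.
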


Spectral risk measures posses a specific dual representation becomes. The following statement can be found in \cite{Pichler2015}.

\begin{proposition}\label{thm:spectral_rm-dual}
	A spectral risk measure $\rho_\phi: L^p \to \R$ with spectrum $\phi \in L^q$ can be represented as
	\begin{itemize}
		\item[(i)] $ \rho_\phi(X)= \sup_{ U \sim \U(0,1) } \E[X \phi(U)]. $
		\item[(ii)] $ \rho_\phi(X)= \sup\Big\{\E[X Y]: Y \in L^q,  \ Y \leq_{cx} \phi(U), \ U \sim \U(0,1) \Big\}.  $
	%	\item[(iii)] \begin{align*}
	%		\rho_\phi(X)= \sup\Bigg\{ \E[XY]: \ &Y \in L^q, \ E[Y] =1, \\
	%		 &\ES_\alpha(Y) \leq \frac{1}{1-\alpha} \int_{\alpha}^1 \phi(u)\dif u, \ 0 \leq %\alpha \leq 1  \Bigg\}
%		\end{align*}
	\end{itemize}
	The suprema are attained and the maximizer is given by $\phi(U_X)$, where $U_X$ is the generalized distributional transform of $X$.
\end{proposition}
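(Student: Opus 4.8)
The plan is to derive both representations from two classical ingredients: the Hardy--Littlewood rearrangement inequality $\E[XY]\le \int_0^1 q_X(u)\,q_Y(u)\,\dif u$ (valid whenever $XY$ is integrable), and the distributional transform of Lemma \ref{thm:A:distribution-fransform}. Throughout one records the following routine facts. Since $X\in L^p$ and $\phi\in L^q$, every expectation and quantile integral below is finite by H\"older's inequality, so in particular $\rho_\phi$ is real-valued. For any $U\sim\U(0,1)$ the random variable $\phi(U)$ has quantile function $\phi$ up to a countable (hence Lebesgue-null) set, because $\phi$ is increasing and right-continuous; thus $\int_0^1 q_X(u)\,q_{\phi(U)}(u)\,\dif u=\int_0^1 q_X(u)\,\phi(u)\,\dif u=\rho_\phi(X)$. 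Finally, $Y\leq_{cx}\phi(U)$ forces $\E[Y]=\E[\phi(U)]=\int_0^1\phi=1$, so the candidate maximizers are genuine densities.

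For part (i), the lower bound $\rho_\phi(X)\le\sup_{U\sim\U(0,1)}\E[X\phi(U)]$ comes from the choice $U=U_X$: since $q_X(U_X)=X$ $\P$-a.s.\ and $U_X\sim\U(0,1)$, one gets $\E[X\phi(U_X)]=\E[q_X(U_X)\phi(U_X)]=\int_0^1 q_X(u)\phi(u)\,\dif u=\rho_\phi(X)$. For the converse, fix any $U\sim\U(0,1)$ and apply Hardy--Littlewood with $Y=\phi(U)$; the right-hand side is $\int_0^1 q_X(u)\,q_{\phi(U)}(u)\,\dif u=\rho_\phi(X)$ by the remark above. This proves (i) and simultaneously shows that $\phi(U_X)$ attains the supremum.

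For part (ii), the feasible set contains $\phi(U_X)$ (it is equal in law to $\phi(U)$, hence trivially $\leq_{cx}\phi(U)$), so the supremum is at least $\E[X\phi(U_X)]=\rho_\phi(X)$ by the computation just made. Conversely, let $Y\in L^q$ with $Y\leq_{cx}\phi(U)$. Hardy--Littlewood gives $\E[XY]\le\int_0^1 q_X(u)\,q_Y(u)\,\dif u$, so it suffices to show $\int_0^1 q_X q_Y\le\int_0^1 q_X\phi$. Using the characterization that $Y\leq_{cx}\phi(U)$ is equivalent to $\E[Y]=1$ together with $\int_u^1 q_Y(t)\,\dif t\le\int_u^1\phi(t)\,\dif t$ for all $u\in[0,1]$, set $\psi(u):=\int_u^1(\phi(t)-q_Y(t))\,\dif t\ge0$; then $\psi$ is continuous with $\psi(0)=\psi(1)=0$. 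Integrating by parts against the Lebesgue--Stieltjes measure of the nondecreasing function $q_X$,
\[ \int_0^1 q_X(u)\big(\phi(u)-q_Y(u)\big)\,\dif u \;=\; -\int_0^1 q_X(u)\,\dif\psi(u) \;=\; \int_0^1 \psi(u)\,\dif q_X(u)\;\ge\;0, \]
since $\psi\ge0$ and $\dif q_X$ is a nonnegative measure. Hence $\E[XY]\le\rho_\phi(X)$, establishing (ii), with maximizer $\phi(U_X)$ once more.

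The step I expect to be the main obstacle is the final paragraph: making the convex-order/integration-by-parts argument airtight. This requires justifying the quantile characterization of $\leq_{cx}$ (equality of means plus the stop-loss ordering of upper-tail quantile integrals), and carrying out the Stieltjes integration by parts when $q_X$ is merely monotone and possibly unbounded near the endpoints --- one would truncate to $[\varepsilon,1-\varepsilon]$, use $q_X\in L^p(0,1)$ and $\phi,q_Y\in L^q(0,1)$ to control the boundary contributions, and let $\varepsilon\downarrow0$, exploiting $\psi(0)=\psi(1)=0$. By contrast, part (i), the attainment claims, and the verification of the integrability hypotheses of Hardy--Littlewood are essentially direct substitutions once Lemma \ref{thm:A:distribution-fransform} and the rearrangement inequality are in hand.
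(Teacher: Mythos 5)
Your argument is sound, and it is worth noting that the paper itself gives no proof of this proposition at all -- it is quoted verbatim from Pichler (2015) -- so there is no in-paper argument to compare against. Your route is the standard self-contained one: the upper Hardy--Littlewood (Hoeffding) rearrangement inequality $\E[XY]\le\int_0^1 q_X q_Y$, the distributional transform $q_X(U_X)=X$ a.s.\ (Lemma \ref{thm:A:distribution-fransform}, which is exactly why the paper assumes an atomless space), and the quantile characterization of $\leq_{cx}$ (equal means plus $\int_u^1 q_Y\le\int_u^1 q_{\phi(U)}=\int_u^1\phi$ for all $u$). Part (i) and the attainment by $\phi(U_X)$ are indeed direct substitutions, and your identification of the only delicate step is accurate: the inequality $\int_0^1 q_X(\phi-q_Y)\,\dif u\ge 0$. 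Your truncation plan closes it: on $[\varepsilon,1-\varepsilon]$ integration by parts is legitimate because $\psi$ is absolutely continuous (so no common-jump issues with the monotone $q_X$), the interior term $\int\psi\,\dif q_X$ is nonnegative, and the boundary terms vanish since $|q_X(\varepsilon)|\varepsilon^{1/p}\to0$ (monotonicity plus $q_X\in L^p(0,1)$) while $|\psi(\varepsilon)|\le\varepsilon^{1/p}\|\phi-q_Y\|_{L^q(0,\varepsilon)}=o(\varepsilon^{1/p})$ by H\"older and $\psi(0)=0$, and symmetrically at $1$; dominated convergence (the integrand is in $L^1$ by H\"older) then passes to the limit. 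Two cosmetic remarks: feasibility of $\phi(U_X)$ in (ii) uses that it is equal in law to $\phi(U)$, as you say; and $Y\leq_{cx}\phi(U)$ with $\phi\ge0$ in fact also forces $Y\ge0$ a.s.\ (apply the convex function $x\mapsto(-x)^+$), which complements your observation that feasible $Y$ are densities, though the proof nowhere needs it. In short: correct, essentially the classical proof of the spectral-risk-measure dual representation that the paper outsources to the cited reference.
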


\bibliographystyle{amsplain}
\bibliography{Literature_Dissertation_Glauner}

\end{document}